\documentclass{article}%
\usepackage{amsfonts}
\usepackage{amsmath}
\usepackage{amssymb}
\usepackage{graphicx}%
\setcounter{MaxMatrixCols}{30}
%TCIDATA{OutputFilter=latex2.dll}
%TCIDATA{Version=5.00.0.2552}
%TCIDATA{CSTFile=40 LaTeX article.cst}
%TCIDATA{Created=Saturday, August 30, 2008 14:42:14}
%TCIDATA{LastRevised=Saturday, December 20, 2008 15:04:52}
%TCIDATA{<META NAME="GraphicsSave" CONTENT="32">}
%TCIDATA{<META NAME="SaveForMode" CONTENT="1">}
%TCIDATA{<META NAME="DocumentShell" CONTENT="Standard LaTeX\Standard LaTeX Article">}
\newtheorem{theorem}{Theorem}

\newtheorem{definition}[theorem]{Definition}

\newtheorem{lemma}[theorem]{Lemma}

\newtheorem{remark}[theorem]{Remark}

\newenvironment{proof}[1][Proof]{\noindent\textbf{#1.} }{\ \rule{0.5em}{0.5em}}
\begin{document}

\title{A general stochastic maximum principle for optimal control problems of
forward-backward systems}
\author{\textbf{Seid BAHLALI\thanks{Laboratory of Applied Mathematics, University Med
Khider, Po. Box 145, Biskra 07000, Algeria. sbahlali@yahoo.fr \ }}}
\maketitle

\begin{abstract}
Stochastic maximum principle of nonlinear controlled forward-backward systems,
where the set of strict (classical) controls need not be convex and the
diffusion coefficient depends explicitly on the variable control, is an open
problem impossible to solve by the classical method of spike variation. In
this paper, we introduce a new approach to solve this open problem and we
establish necessary as well as sufficient conditions of optimality, in the
form of global stochastic maximum principle, for two models. The first
concerns the relaxed controls, who are a measure-valued processes. The second
is a restriction of the first to strict control problems.

\ 

\textbf{AMS Subject Classification}\textit{. }93\ Exx

\ 

\textbf{Keywords}\textit{. }Forward-backward stochastic differential
equations,\textit{\ }Stochastic maximum principle, Strict control, Relaxed
control, Adjoint equations, Variational inequality.

\end{abstract}

\section{Introduction}

We study a stochastic control problem where the system is governed by a
nonlinear forward-backward stochastic differential equation (FBSDE\ for short)
of the type%
\[
\left\{
\begin{array}
[c]{l}%
dx_{t}^{v}=b\left(  t,x_{t}^{v},v_{t}\right)  dt+\sigma\left(  t,x_{t}%
^{v},v_{t}\right)  dW_{t},\\
x_{0}^{v}=x,\\
dy_{t}^{v}=-f\left(  t,x_{t}^{v},y_{t}^{v},z_{t}^{v},v_{t}\right)
dt+z_{t}^{v}dW_{t},\\
y_{T}^{v}=\varphi\left(  x_{T}^{v}\right)  ,
\end{array}
\right.
\]
where $b,$ $\sigma,\ f$ and $\varphi$ are given maps, $W=\left(  W_{t}\right)
_{t\geq0}$ is a standard Brownian motion, defined on a filtered probability
space $\left(  \Omega,\mathcal{F},\left(  \mathcal{F}_{t}\right)  _{t\geq
0},\mathbb{\mathcal{P}}\right)  ,$ satisfying the usual conditions.

The control variable $v=\left(  v_{t}\right)  $, called strict (classical)
control, is an $\mathcal{F}_{t}$ adapted process with values in some set $U$
of $\mathbb{R}^{k}$. We denote by $\mathcal{U}$ the class of all strict controls.

The criteria to be minimized, over the set $\mathcal{U}$, has the form%
\[
J\left(  v\right)  =\mathbb{E}\left[  g\left(  x_{T}^{v}\right)  +h\left(
y_{0}^{v}\right)  +%
%TCIMACRO{\dint \nolimits_{0}^{T}}%
%BeginExpansion
{\displaystyle\int\nolimits_{0}^{T}}
%EndExpansion
l\left(  t,x_{t}^{v},y_{t}^{v},z_{t}^{v},v_{t}\right)  dt\right]  ,
\]
where $g,\ h$ and $l$ are given functions and $\left(  x_{t}^{v},y_{t}%
^{v},z_{t}^{v}\right)  $ is the trajectory of the system controlled by $v.$

A control $u\in\mathcal{U}$ is called optimal if it satisfies%
\[
J\left(  u\right)  =\underset{v\in\mathcal{U}}{\inf}J\left(  v\right)  .
\]

The objective of this kind of stochastic control problem is to obtain the
optimality conditions of controls in the form of Pontryagin stochastic maximum
principle. There is many works on the subject, including Peng $\left[
42\right]  $, Xu $\left[  47\right]  $, Wu $\left[  46\right]  $, Shi and Wu
$\left[  44\right]  $, Ji and Zhou $\left[  30\right]  $, Bahlali and Labed
$\left[  5\right]  $ and Bahlali $\left[  8\right]  $. All the previous
results on stochastic maximum principle of forward-backward systems are
established in the cases where the control domain is convex or uncontrolled
diffusion coefficient. The general case, where the set of strict controls need
not be convex and the diffusion coefficient depends explicitly on the control
variable, is an open problem unsolved until now. There is no result in the
literature concerning this problem and the classical way which consists to use
the spike variation method on the strict controls does not lead to any result.
The approach developed by Peng $\left[  41\right]  $ to solve the similar case
of controlled stochastic differential equations (SDEs) cannot be applied in
the case of controlled FBSDEs. Indeed, since the control domain is not
necessarily convex and the diffusion $\sigma$ depends on the control variable,
the classical way of treating such a problem would be to use the spike
variation method on the strict controls and to introduce the second-order
variational equation. But, the FBSDE system depends on three variables
($x,\ y$ and $z$) and the second order expansion leads to a nonlinear problem.
It is impossible to deduce then the second-order variational inequality.

In this paper, we solve this open problem by using the new approach developed
by Bahlali $\left[  7\right]  .$ We introduse then a bigger new class
$\mathcal{R}$ of processes by replacing the $U$-valued process $\left(
v_{t}\right)  $ by a $\mathbb{P}\left(  U\right)  $-valued process $\left(
q_{t}\right)  $, where $\mathbb{P}\left(  U\right)  $ is the space of
probability measures on $U$ equipped with the topology of stable convergence.
This new class of processes is called relaxed controls and have a richer
structure of convexity, for which the control problem becomes solvable. The
main idea is to use the property of convexity of the set of relaxed controls
and treat the problem with the method of convex perturbation on relaxed
controls (instead of that of the spike variation on strict one). We establish
then necessary and sufficient optimality conditions for relaxed controls and
we derive directly the optimality conditions for strict controls from those of
relaxed one.

\ 

In the relaxed model, the system is governed by the FBSDE%
\[
\left\{
\begin{array}
[c]{l}%
dx_{t}^{q}=\int_{U}b\left(  t,x_{t}^{q},a\right)  q_{t}\left(  da\right)
dt+\int_{U}\sigma\left(  t,x_{t}^{q},a\right)  q_{t}\left(  da\right)
dW_{t},\\
x_{0}^{q}=x,\\
dy_{t}^{q}=-\int_{U}f\left(  t,x_{t}^{q},y_{t}^{q},z_{t}^{q},a\right)
q_{t}\left(  da\right)  dt+z_{t}^{q}dW_{t},\\
y_{T}^{q}=\varphi\left(  x_{T}^{q}\right)  .
\end{array}
\right.
\]

The functional cost to be minimized, over the class $\mathcal{R}$ of relaxed
controls, is defined by%
\[
\mathcal{J}\left(  q\right)  =\mathbb{E}\left[  g\left(  x_{T}^{q}\right)
+h\left(  y_{0}^{q}\right)  +%
%TCIMACRO{\dint \nolimits_{0}^{T}}%
%BeginExpansion
{\displaystyle\int\nolimits_{0}^{T}}
%EndExpansion
\int_{U}l\left(  t,x_{t}^{q},y_{t}^{q},z_{t}^{q},a\right)  q_{t}\left(
da\right)  dt\right]  .
\]

A relaxed control $\mu$ is called optimal if it solves
\[
\mathcal{J}\left(  \mu\right)  =\inf\limits_{q\in\mathcal{R}}\mathcal{J}%
\left(  q\right)  .
\]

The relaxed control problem is a generalization of the problem of strict
controls. Indeed, if $q_{t}\left(  da\right)  =\delta_{v_{t}}\left(
da\right)  $ is a Dirac measure concentrated at a single point $v_{t}\in U$,
then we get a strict control problem as a particular case of the relaxed one.

To achieve the objective of this paper and establish necessary and sufficient
optimality conditions for these two models, we proceed as follows.

Firstly, we give the optimality conditions for relaxed controls. The idea is
to use the fact that the set of relaxed controls is convex. Then, we establish
necessary optimality conditions by using the classical way of the convex
perturbation method. More precisely, if we denote by $\mu$ an optimal relaxed
control and $q$ is an arbitrary element of $\mathcal{R}$, then with a
sufficiently small $\theta>0$ and for each $t\in\left[  0,T\right]  $, we can
define a perturbed control as follows%
\[
\mu_{t}^{\theta}=\mu_{t}+\theta\left(  q_{t}-\mu_{t}\right)  .
\]

We derive the variational equation from the state equation, and the
variational inequality from the inequality%
\[
0\leq\mathcal{J}\left(  \mu^{\theta}\right)  -\mathcal{J}\left(  \mu\right)
.
\]

By using the fact that the coefficients $b,\ \sigma,\ f$ and $l$ are linear
with respect to the relaxed control variable, necessary optimality conditions
are obtained directly in the global form.

To enclose this part of the paper, we prove under minimal additional
hypothesis, that these necessary optimality conditions for relaxed controls
are also sufficient.

The second main result in the paper characterizes the optimality for strict
control processes. It is directly derived from the above result by restricting
from relaxed to strict controls. The idea is to replace the relaxed controls
by a Dirac measures charging a strict controls. Thus, we reduce the set
$\mathcal{R}$ of relaxed controls and we minimize the cost $\mathcal{J}$ over
the subset $\delta\left(  \mathcal{U}\right)  =\left\{  q\in\mathcal{R}\text{
\ / }\ q=\delta_{v}\ \ ;\ \ v\in\mathcal{U}\right\}  $. Necessary optimality
conditions for strict controls are then obtained directly from those of
relaxed one. Finally, we prove that these necessary conditions becomes
sufficient, without imposing neither the convexity of $U$ nor that of the
Hamiltonian $H$ in $v$.

This paper can be also regarded as an extension of that of Bahlali $\left[
7\right]  $ to the forward-backward systems. Indeed, if we consider only the
forward equation, without the backward one ($y=z=f=h=0$), we recover then
exactly all the results of $\left[  7\right]  .$

The paper is organized as follows. In Section 2, we formulate the strict and
relaxed control problems and give the various assumptions used throughout the
paper. Section 3 is devoted to study the relaxed control problems and we
establish necessary as well as sufficient conditions of optimality for relaxed
controls. In the last Section, we derive directly from the results of Section
3, the optimality conditions for strict controls.

\ 

Along this paper, we denote by $C$ some positive constant, $\mathcal{M}%
_{n\times d}\left(  \mathbb{R}\right)  $ the space of $n\times d$ real matrix
and $\mathcal{M}_{n\times n}^{d}\left(  \mathbb{R}\right)  $ the linear space
of vectors $M=\left(  M_{1},...,M_{d}\right)  $ where $M_{i}\in\mathcal{M}%
_{n\times n}\left(  \mathbb{R}\right)  $. We use the standard calculus of
inner and matrix product.

\section{Formulation of the problem}

Let $\left(  \Omega,\mathcal{F},\left(  \mathcal{F}_{t}\right)  _{t\geq
0},\mathcal{P}\right)  $ be a filtered probability space satisfying the usual
conditions, on which a $d$-dimensional Brownian motion $W=\left(
W_{t}\right)  _{t\geq0}$\ is defined. We assume that $\left(  \mathcal{F}%
_{t}\right)  $ is the $\mathcal{P}$- augmentation of the natural filtration of
$W.$

Let $T$ be a strictly positive real number and $U$ a non-empty set of
$\mathbb{R}^{k}$.

\subsection{The strict control problem}

\begin{definition}
\textit{An admissible strict control is an }$\mathcal{F}_{t}-$%
\textit{\ adapted process }$v=\left(  v_{t}\right)  $ \textit{with values in
}$U$\textit{\ such that }
\[
\mathbb{E}\left[  \underset{t\in\left[  0,T\right]  }{\sup}\left\vert
v_{t}\right\vert ^{2}\right]  <\infty.
\]

\textit{We denote by }$\mathcal{U}$\textit{\ the set of all admissible strict
controls.}
\end{definition}

For any $v\in\mathcal{U}$, we consider the following controlled FBSDE
\begin{equation}
\left\{
\begin{array}
[c]{l}%
dx_{t}^{v}=b\left(  t,x_{t}^{v},v_{t}\right)  dt+\sigma\left(  t,x_{t}%
^{v},v_{t}\right)  dW_{t},\\
x_{0}^{v}=x,\\
dy_{t}^{v}=-f\left(  t,x_{t}^{v},y_{t}^{v},z_{t}^{v},v_{t}\right)
dt+z_{t}^{v}dW_{t},\\
y_{T}^{v}=\varphi\left(  x_{T}^{v}\right)  ,
\end{array}
\right.
\end{equation}
where,
\begin{align*}
b  &  :\left[  0,T\right]  \times\mathbb{R}^{n}\times U\longrightarrow
\mathbb{R}^{n},\\
\sigma &  :\left[  0,T\right]  \mathbb{\times R}^{n}\times U\longrightarrow
\mathcal{M}_{n\times d}\left(  \mathbb{R}\right)  ,\\
f  &  :\left[  0,T\right]  \times\mathbb{R}^{n}\times\mathbb{R}^{m}%
\times\mathcal{M}_{m\times d}\left(  \mathbb{R}\right)  \times
U\longrightarrow\mathbb{R}^{m},\\
\varphi &  :\mathbb{R}^{n}\longrightarrow\mathbb{R}^{m},
\end{align*}
and $x$\ is an $n-$dimensional $\mathcal{F}_{0}$-measurable random variable
such that%
\[
\mathbb{E}\left\vert x\right\vert ^{2}<\infty.
\]

The criteria to be minimized is defined from $\mathcal{U}$ into $\mathbb{R}$
by%
\begin{equation}
J\left(  v\right)  =\mathbb{E}\left[  g\left(  x_{T}^{v}\right)  +h\left(
y_{0}^{v}\right)  +%
%TCIMACRO{\dint \nolimits_{0}^{T}}%
%BeginExpansion
{\displaystyle\int\nolimits_{0}^{T}}
%EndExpansion
l\left(  t,x_{t}^{v},y_{t}^{v},z_{t}^{v},v_{t}\right)  dt\right]  ,
\end{equation}
where,%
\begin{align*}
g  &  :\mathbb{R}^{n}\longrightarrow\mathbb{R},\\
h  &  :\mathbb{R}^{m}\longrightarrow\mathbb{R},\\
l  &  :\left[  0,T\right]  \times\mathbb{R}^{n}\times\mathbb{R}^{m}%
\times\mathcal{M}_{m\times d}\left(  \mathbb{R}\right)  \times
U\longrightarrow\mathbb{R}.
\end{align*}

A strict control $u$ is called optimal if it satisfies%
\begin{equation}
J\left(  u\right)  =\inf\limits_{v\in\mathcal{U}}J\left(  v\right)  .
\end{equation}

\ 

We assume that%
\begin{align}
&  b,\ \sigma,\ f,\ g,\ h,\ l\text{ and }\varphi\text{ are continuously
differentiable with respect}\\
&  \text{to }\left(  x,y,z\right)  \text{, they are bounded by }C\left(
1+\left\vert x\right\vert +\left\vert y\right\vert +\left\vert z\right\vert
+\left\vert v\right\vert \right)  \text{ and their}\nonumber\\
&  \text{derivatives with respect to }\left(  x,y,z\right)  \text{ are
continuous in }\left(  x,y,z,v\right) \nonumber\\
&  \text{and uniformly bounded.}\nonumber
\end{align}

Under the above hypothesis, for every $v\in U$, equation $\left(  1\right)  $
has a unique strong solution and the functional cost $J$ is well defined from
$\mathcal{U}$ into $\mathbb{R}$.

\subsection{The relaxed model}

The idea for relaxed the strict control problem defined above is to embed the
set $U$ of strict controls into a wider class which gives a more suitable
topological structure. In the relaxed model, the $U$-valued process $v$ is
replaced by a $\mathbb{P}\left(  U\right)  $-valued process $q$, where
$\mathbb{P}\left(  U\right)  $ denotes the space of probability measure on $U$
equipped with the topology of stable convergence.

\begin{definition}
A relaxed control $\left(  q_{t}\right)  _{t}$ is a $\mathbb{P}\left(
U\right)  $-valued process, progressively measurable with respect to $\left(
\mathcal{F}_{t}\right)  _{t}$\ and such that for each $t$, $1_{]0,t]}.q$\ is
$\mathcal{F}_{t}$-measurable.

\textit{We denote by }$\mathcal{R}$\textit{\ the set of all relaxed controls.}
\end{definition}

\ 

\begin{remark}
Every relaxed control $q$ may be desintegrated as $q\left(  dt,da\right)
=q\left(  t,da\right)  dt=q_{t}\left(  da\right)  dt$, where $q_{t}\left(
da\right)  $ is a progressively measurable process with value in the set of
probability measures $\mathbb{P}(U).$

The set $U$ is embedded into the set $\mathcal{R}$\ of relaxed process by the
mapping
\[
f:v\in U\mathbb{\longmapsto}f_{v}\left(  dt,da\right)  =\delta_{v_{t}%
}(da)dt\in\mathcal{R}%
\]
where $\delta_{v}$ is the atomic measure concentrated at a single point $v$.
\end{remark}

\ 

For more details on relaxed controls, see $\left[  4\right]  ,\left[
6\right]  ,\left[  7\right]  ,\left[  16\right]  ,\left[  21\right]  ,\left[
34\right]  ,\left[  37\right]  ,\left[  38\right]  .$

\ 

For any $q\in\mathcal{R}$, we consider the following relaxed FBSDE%
\begin{equation}
\left\{
\begin{array}
[c]{l}%
dx_{t}^{q}=\int_{U}b\left(  t,x_{t}^{q},a\right)  q_{t}\left(  da\right)
dt+\int_{U}\sigma\left(  t,x_{t}^{q},a\right)  q_{t}\left(  da\right)
dW_{t},\\
x_{0}^{q}=x,\\
dy_{t}^{q}=-\int_{U}f\left(  t,x_{t}^{q},y_{t}^{q},z_{t}^{q},a\right)
q_{t}\left(  da\right)  dt+z_{t}^{q}dW_{t},\\
y_{T}^{q}=\varphi\left(  x_{T}^{q}\right)  .
\end{array}
\right.
\end{equation}

The expected cost to be minimized, in the relaxed model, is defined from
$\mathcal{R}$ into $\mathbb{R}$ by%
\begin{equation}
\mathcal{J}\left(  q\right)  =\mathbb{E}\left[  g\left(  x_{T}^{q}\right)
+h\left(  y_{0}^{q}\right)  +%
%TCIMACRO{\dint \nolimits_{0}^{T}}%
%BeginExpansion
{\displaystyle\int\nolimits_{0}^{T}}
%EndExpansion
\int_{U}l\left(  t,x_{t}^{q},y_{t}^{q},z_{t}^{q},a\right)  q_{t}\left(
da\right)  dt\right]  .
\end{equation}

A relaxed control $\mu$ is called optimal if it solves%
\begin{equation}
\mathcal{J}\left(  \mu\right)  =\inf\limits_{q\in\mathcal{R}}\mathcal{J}%
\left(  q\right)  .
\end{equation}

\begin{remark}
If we put
\begin{align*}
\overline{b}\left(  t,x_{t}^{q},q_{t}\right)   &  =\int_{U}b\left(
t,x_{t}^{q},a\right)  q_{t}\left(  da\right)  ,\\
\overline{\sigma}\left(  t,x_{t}^{q},q_{t}\right)   &  =\int_{U}\sigma\left(
t,x_{t}^{q},a\right)  q_{t}\left(  da\right)  ,\\
\overline{f}\left(  t,x_{t}^{q},y_{t}^{q},z_{t}^{q},a\right)   &  =\int
_{U}f\left(  t,x_{t}^{q},y_{t}^{q},z_{t}^{q},a\right)  q_{t}\left(  da\right)
,\\
\overline{l}\left(  t,x_{t}^{q},y_{t}^{q},z_{t}^{q},a\right)   &  =\int
_{U}l\left(  t,x_{t}^{q},y_{t}^{q},z_{t}^{q},a\right)  q_{t}\left(  da\right)
.
\end{align*}

Then, equation $\left(  5\right)  $ becomes
\[
\left\{
\begin{array}
[c]{l}%
dx_{t}^{q}=\overline{b}\left(  t,x_{t}^{q},q_{t}\right)  dt+\overline{\sigma
}\left(  t,x_{t}^{q},q_{t}\right)  dW_{t},\\
x_{0}^{q}=x,\\
dy_{t}^{q}=-\overline{f}\left(  t,x_{t}^{q},y_{t}^{q},z_{t}^{q},q_{t}\right)
dt+z_{t}^{q}dW_{t},\\
y_{T}^{q}=\varphi\left(  x_{T}^{q}\right)  .
\end{array}
\right.
\]

With a functional cost given by%
\[
\mathcal{J}\left(  q\right)  =\mathbb{E}\left[  g\left(  x_{T}^{q}\right)
+h\left(  y_{0}^{q}\right)  +%
%TCIMACRO{\dint \nolimits_{0}^{T}}%
%BeginExpansion
{\displaystyle\int\nolimits_{0}^{T}}
%EndExpansion
\overline{l}\left(  t,x_{t}^{q},y_{t}^{q},z_{t}^{q},q_{t}\right)  dt\right]
.
\]

Hence, by introducing relaxed controls, we have replaced $U$ by a larger space
$\mathbb{P}\left(  U\right)  $. We have gained the advantage that
$\mathbb{P}\left(  U\right)  $ is and convex. Furthermore, the new
coefficients of equation $\left(  5\right)  $ and the running cost are linear
with respect to the relaxed control variable.
\end{remark}

\begin{remark}
The coefficients $\overline{b},\overline{\sigma}$ and $\overline{f}$ (defined
in the above remark) check respectively the same assumptions as $b,\sigma$ and
$f$. Then, under assumptions $\left(  4\right)  $, $\overline{b}%
,\overline{\sigma}$ and $\overline{f}$ are uniformly Lipschitz and with linear
growth. Then by classical results on FBSDEs, for every $q\in\mathcal{R}$
equation $\left(  5\right)  $ has a unique strong solution.

On the other hand, It is easy to see that $\overline{l}$ checks the same
assumptions as $l$. Then, the functional cost $\mathcal{J}$ is well defined
from $\mathcal{R}$ into $\mathbb{R}$.
\end{remark}

\begin{remark}
If $q_{t}=\delta_{v_{t}}$ is an atomic measure concentrated at a single point
$v_{t}\in U$, then for each $t\in\left[  0,T\right]  $ we have
\begin{align*}
\int_{U}b\left(  t,x_{t}^{q},a\right)  q_{t}\left(  da\right)   &  =\int
_{U}b\left(  t,x_{t}^{q},a\right)  \delta_{v_{t}}\left(  da\right)  =b\left(
t,x_{t}^{q},v_{t}\right)  ,\\
\int_{U}\sigma\left(  t,x_{t}^{q},a\right)  q_{t}\left(  da\right)   &
=\int_{U}\sigma\left(  t,x_{t}^{q},a\right)  \delta_{v_{t}}\left(  da\right)
=\sigma\left(  t,x_{t}^{q},v_{t}\right)  ,\\
\int_{U}f\left(  t,x_{t}^{q},y_{t}^{q},z_{t}^{q},a\right)  q_{t}\left(
da\right)   &  =\int_{U}f\left(  t,x_{t}^{q},y_{t}^{q},z_{t}^{q},a\right)
\delta_{v_{t}}\left(  da\right)  =f\left(  t,x_{t}^{q},y_{t}^{q},z_{t}%
^{q},v_{t}\right)  ,\\
\int_{U}l\left(  t,x_{t}^{q},y_{t}^{q},z_{t}^{q},a\right)  q_{t}\left(
da\right)   &  =\int_{U}l\left(  t,x_{t}^{q},y_{t}^{q},z_{t}^{q},a\right)
\delta_{v_{t}}\left(  da\right)  =l\left(  t,x_{t}^{q},y_{t}^{q},z_{t}%
^{q},v_{t}\right)  .
\end{align*}

In this case $\left(  x^{q},y^{q},z^{q}\right)  =\left(  x^{v},y^{v}%
,z^{v}\right)  $, $\mathcal{J}\left(  q\right)  =J\left(  v\right)  $ and we
get a strict control problem. So the problem of strict controls $\left\{
\left(  1\right)  ,\left(  2\right)  ,\left(  3\right)  \right\}  $ is a
particular case of relaxed control problem $\left\{  \left(  5\right)
,\left(  6\right)  ,\left(  7\right)  \right\}  $.
\end{remark}

\begin{remark}
The relaxed control problems studied in El Karoui et al $\left[  16\right]  $
and Bahlali, Mezerdi and Djehiche $\left[  4\right]  $ is different to ours,
in that they relax the corresponding infinitesimal generator of the state
process, which leads to a martingale problem for which the state process
driven by an orthogonal martingale measure. In our setting the driving
martingale measure $q_{t}\left(  da\right)  dW_{t}$ is however not orthogonal.
See Ma and Yong $\left[  34\right]  $ for more details.
\end{remark}

\section{Optimality conditions for relaxed controls}

In this section, we study the problem $\left\{  \left(  5\right)  ,\left(
6\right)  ,\left(  7\right)  \right\}  $ and we establish necessary as well as
sufficient conditions of optimality for relaxed controls.

\subsection{Preliminary results}

Since the set $\mathcal{R}$ is convex, then the classical way to derive
necessary optimality conditions for relaxed controls is to use the convex
perturbation method. More precisely, let $\mu$ be an optimal relaxed control
and $\left(  x^{\mu},y^{\mu},z^{\mu}\right)  $ the solution of $\left(
5\right)  $ controlled by $\mu$. Then, for each $t\in\left[  0,T\right]  $ we
can define a perturbed relaxed control as follows%
\[
\mu_{t}^{\theta}=\mu_{t}+\theta\left(  q_{t}-\mu_{t}\right)  ,
\]
where, $\theta>0$ is sufficiently small and $q$ is an arbitrary element of
$\mathcal{R}$.

Denote by $\left(  x^{\theta},y^{\theta},z^{\theta}\right)  $ the solution of
$\left(  5\right)  $ associated with $\mu^{\theta}$.

From optimality of $\mu$, the variational inequality will be derived from the
fact that
\[
0\leq\mathcal{J}\left(  \mu^{\theta}\right)  -\mathcal{J}\left(  \mu\right)
.
\]

For this end, we need the following classical lemmas.

\begin{lemma}
\textit{Under assumptions }$\left(  4\right)  $, we have%
\begin{align}
\underset{\theta\rightarrow0}{\lim}\left[  \underset{t\in\left[  0,T\right]
}{\sup}\mathbb{E}\left\vert x_{t}^{\theta}-x_{t}^{\mu}\right\vert ^{2}\right]
&  =0,\\
\underset{\theta\rightarrow0}{\lim}\left[  \underset{t\in\left[  0,T\right]
}{\sup}\mathbb{E}\left\vert y_{t}^{\theta}-y_{t}^{\mu}\right\vert ^{2}\right]
&  =0,\\
\underset{\theta\rightarrow0}{\lim}\mathbb{E}%
%TCIMACRO{\dint \nolimits_{0}^{T}}%
%BeginExpansion
{\displaystyle\int\nolimits_{0}^{T}}
%EndExpansion
\left\vert z_{t}^{\theta}-z_{t}^{\mu}\right\vert ^{2}dt  &  =0.
\end{align}

\end{lemma}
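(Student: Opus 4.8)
The plan is to prove the three convergence statements by a standard continuity-with-respect-to-control argument for forward-backward SDEs, relying on the boundedness and Lipschitz properties recorded in assumption $(4)$ together with the linearity of the relaxed coefficients $\overline{b},\overline{\sigma},\overline{f}$ in the control variable. The key structural observation is that the perturbation enters linearly: since $\mu_{t}^{\theta}=\mu_{t}+\theta(q_{t}-\mu_{t})$ and the relaxed coefficients are linear in $q$, we have for instance $\overline{b}(t,x,\mu_{t}^{\theta})=\overline{b}(t,x,\mu_{t})+\theta\,\overline{b}(t,x,q_{t}-\mu_{t})$, and similarly for $\overline{\sigma}$ and $\overline{f}$. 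Thus the only $\theta$-dependence that is not through the state itself appears as an additive term of order $\theta$.

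First I would handle the forward convergence $(8)$. I would write $x_{t}^{\theta}-x_{t}^{\mu}$ as a stochastic integral equation, split each coefficient difference into a part controlled by the Lipschitz constant times $|x_{s}^{\theta}-x_{s}^{\mu}|$ and the explicit order-$\theta$ perturbation term, then apply the Burkholder--Davis--Gundy inequality to the martingale part and the Cauchy--Schwarz inequality to the drift part. This yields an estimate of the form $\mathbb{E}|x_{t}^{\theta}-x_{t}^{\mu}|^{2}\leq C\int_{0}^{t}\mathbb{E}|x_{s}^{\theta}-x_{s}^{\mu}|^{2}\,ds+C\theta^{2}K$, where $K$ is a bound coming from the square-integrability of $q-\mu$ and the linear growth. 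Gr\"onwall's lemma then gives $\sup_{t}\mathbb{E}|x_{t}^{\theta}-x_{t}^{\mu}|^{2}\leq C\theta^{2}$, which tends to $0$.

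Next I would treat the backward part $(9)$ and $(10)$ simultaneously, since for a BSDE the $y$ and $z$ estimates are coupled. I would apply It\^o's formula to $|y_{t}^{\theta}-y_{t}^{\mu}|^{2}$ between $t$ and $T$, using the terminal condition $y_{T}^{\theta}-y_{T}^{\mu}=\varphi(x_{T}^{\theta})-\varphi(x_{T}^{\mu})$, which is already controlled by the forward estimate $(8)$ through the Lipschitz continuity of $\varphi$. The It\^o expansion produces on one side $\mathbb{E}|y_{t}^{\theta}-y_{t}^{\mu}|^{2}+\mathbb{E}\int_{t}^{T}|z_{s}^{\theta}-z_{s}^{\mu}|^{2}\,ds$ and on the other side the terminal term plus cross terms involving the driver difference $\overline{f}(s,x^{\theta},y^{\theta},z^{\theta},\mu_{s}^{\theta})-\overline{f}(s,x^{\mu},y^{\mu},z^{\mu},\mu_{s})$. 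I would dominate the driver difference by the Lipschitz bound in $(x,y,z)$ plus the order-$\theta$ control perturbation, absorb the $z$-contribution into the left-hand side using Young's inequality with a small weight $\varepsilon$, and then apply Gr\"onwall's lemma to the $y$-terms. This yields both $(9)$ and $(10)$ from the already-established $(8)$.

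The main obstacle I anticipate is the coupled treatment of $z$ in the backward estimate: the term $\mathbb{E}\int_{t}^{T}|z_{s}^{\theta}-z_{s}^{\mu}|^{2}\,ds$ must be kept on the left-hand side, so the cross term $2\langle y_{s}^{\theta}-y_{s}^{\mu},\,(\text{driver difference})\rangle$ must be estimated carefully — using Young's inequality $2ab\leq\varepsilon a^{2}+\varepsilon^{-1}b^{2}$ with $\varepsilon$ chosen small enough that the $z$-dependent part of the driver Lipschitz bound does not overwhelm the $\int|z^{\theta}-z^{\mu}|^{2}$ term on the left. Care is also needed because the FBSDE is coupled through the terminal condition $\varphi(x_{T}^{q})$, but here the coupling is one-directional for the purpose of the estimates: the forward equation does not depend on $(y,z)$, so $(8)$ can be established first and fed into the backward estimate, avoiding a genuine fixed-point argument.
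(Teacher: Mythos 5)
Your proposal is correct and follows essentially the same route as the paper: the forward estimate $(8)$ is the standard Gr\"onwall--BDG argument (which the paper simply imports from its reference $[7]$), and for $(9)$--$(10)$ you apply It\^o's formula to $\left\vert y_{t}^{\theta}-y_{t}^{\mu}\right\vert ^{2}$, split the driver difference into Lipschitz terms plus the order-$\theta$ control perturbation, absorb the $z$-contribution with a suitably small Young parameter, and close with Gr\"onwall, exactly as the paper does. Your observation that the coupling is one-directional (so $(8)$ feeds into the backward estimate without a fixed-point argument) matches the paper's ordering of the steps.
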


\begin{proof}
$\left(  8\right)  $ is proved in $\left[  7\text{, Lemm 9, page 2085}\right]
.$

\ 

Let us prove $\left(  9\right)  $\ and $\left(  10\right)  $.

Applying It\^{o}'s formula to $\left(  y_{t}^{\theta}-y_{t}^{\mu}\right)
^{2}$, we have%
\[%
\begin{array}
[c]{c}%
\mathbb{E}\left\vert y_{t}^{\theta}-y_{t}^{\mu}\right\vert ^{2}+\mathbb{E}%
%TCIMACRO{\dint \nolimits_{t}^{T}}%
%BeginExpansion
{\displaystyle\int\nolimits_{t}^{T}}
%EndExpansion
\left\vert z_{s}^{\theta}-z_{s}^{\mu}\right\vert ^{2}ds=\mathbb{E}\left\vert
\varphi\left(  x_{T}^{\theta}\right)  -\varphi\left(  x_{T}^{\mu}\right)
\right\vert ^{2}\\
+2\mathbb{E}%
%TCIMACRO{\dint \nolimits_{t}^{T}}%
%BeginExpansion
{\displaystyle\int\nolimits_{t}^{T}}
%EndExpansion
\left\vert \left(  y_{s}^{\theta}-y_{s}^{\mu}\right)  \left[
%TCIMACRO{\dint \nolimits_{U}}%
%BeginExpansion
{\displaystyle\int\nolimits_{U}}
%EndExpansion
f\left(  s,x_{s}^{\theta},y_{s}^{\theta},z_{s}^{\theta},a\right)  \mu
_{s}^{\theta}\left(  da\right)  -%
%TCIMACRO{\dint \nolimits_{U}}%
%BeginExpansion
{\displaystyle\int\nolimits_{U}}
%EndExpansion
f\left(  s,x_{s}^{\mu},y_{s}^{\mu},z_{s}^{\mu},a\right)  \mu_{s}\left(
da\right)  \right]  \right\vert \,ds.
\end{array}
\]

From the Young formula, for every $\varepsilon>0$, we have%
\begin{align*}
&  \mathbb{E}\left\vert y_{t}^{\theta}-y_{t}^{\mu}\right\vert ^{2}+\mathbb{E}%
%TCIMACRO{\dint \nolimits_{t}^{T}}%
%BeginExpansion
{\displaystyle\int\nolimits_{t}^{T}}
%EndExpansion
\left\vert z_{s}^{\theta}-z_{s}^{\mu}\right\vert ^{2}ds\\
&  \leq\mathbb{E}\left\vert \varphi\left(  x_{T}^{\theta}\right)
-\varphi\left(  x_{T}^{\mu}\right)  \right\vert ^{2}+%
%TCIMACRO{\QDOVERD{.}{.}{1}{\varepsilon}}%
%BeginExpansion
\genfrac{.}{.}{}{0}{1}{\varepsilon}%
%EndExpansion
\mathbb{E}%
%TCIMACRO{\dint \nolimits_{t}^{T}}%
%BeginExpansion
{\displaystyle\int\nolimits_{t}^{T}}
%EndExpansion
\left\vert y_{s}^{\theta}-y_{s}^{\mu}\right\vert ^{2}ds\\
&  +\varepsilon\mathbb{E}%
%TCIMACRO{\dint \nolimits_{t}^{T}}%
%BeginExpansion
{\displaystyle\int\nolimits_{t}^{T}}
%EndExpansion
\left\vert
%TCIMACRO{\dint \nolimits_{U}}%
%BeginExpansion
{\displaystyle\int\nolimits_{U}}
%EndExpansion
f\left(  s,x_{s}^{\theta},y_{s}^{\theta},z_{s}^{\theta},a\right)  \mu
_{s}^{\theta}\left(  da\right)  -%
%TCIMACRO{\dint \nolimits_{U}}%
%BeginExpansion
{\displaystyle\int\nolimits_{U}}
%EndExpansion
f\left(  s,x_{s}^{\mu},y_{s}^{\mu},z_{s}^{\mu},a\right)  \mu_{s}\left(
da\right)  \right\vert ^{2}ds.
\end{align*}

Then,%
\begin{align*}
&  \mathbb{E}\left\vert y_{t}^{\theta}-y_{t}^{\mu}\right\vert ^{2}+\mathbb{E}%
%TCIMACRO{\dint \nolimits_{t}^{T}}%
%BeginExpansion
{\displaystyle\int\nolimits_{t}^{T}}
%EndExpansion
\left\vert z_{s}^{\theta}-z_{s}^{\mu}\right\vert ^{2}ds\\
&  \leq\mathbb{E}\left\vert \varphi\left(  x_{T}^{\theta}\right)
-\varphi\left(  x_{T}^{\mu}\right)  \right\vert ^{2}+%
%TCIMACRO{\QDOVERD{.}{.}{1}{\varepsilon}}%
%BeginExpansion
\genfrac{.}{.}{}{0}{1}{\varepsilon}%
%EndExpansion
\mathbb{E}%
%TCIMACRO{\dint \nolimits_{t}^{T}}%
%BeginExpansion
{\displaystyle\int\nolimits_{t}^{T}}
%EndExpansion
\left\vert y_{s}^{\theta}-y_{s}^{\mu}\right\vert ^{2}ds\\
&  +C\varepsilon\mathbb{E}%
%TCIMACRO{\dint \nolimits_{t}^{T}}%
%BeginExpansion
{\displaystyle\int\nolimits_{t}^{T}}
%EndExpansion
\left\vert
%TCIMACRO{\dint \nolimits_{U}}%
%BeginExpansion
{\displaystyle\int\nolimits_{U}}
%EndExpansion
f\left(  s,x_{s}^{\theta},y_{s}^{\theta},z_{s}^{\theta},a\right)  \mu
_{s}^{\theta}\left(  da\right)  -%
%TCIMACRO{\dint \nolimits_{U}}%
%BeginExpansion
{\displaystyle\int\nolimits_{U}}
%EndExpansion
f\left(  s,x_{s}^{\theta},y_{s}^{\theta},z_{s}^{\theta},a\right)  \mu
_{s}\left(  da\right)  \right\vert ^{2}ds\\
&  +C\varepsilon\mathbb{E}%
%TCIMACRO{\dint \nolimits_{t}^{T}}%
%BeginExpansion
{\displaystyle\int\nolimits_{t}^{T}}
%EndExpansion
\left\vert
%TCIMACRO{\dint \nolimits_{U}}%
%BeginExpansion
{\displaystyle\int\nolimits_{U}}
%EndExpansion
f\left(  s,x_{s}^{\theta},y_{s}^{\theta},z_{s}^{\theta},a\right)  \mu
_{s}\left(  da\right)  -%
%TCIMACRO{\dint \nolimits_{U}}%
%BeginExpansion
{\displaystyle\int\nolimits_{U}}
%EndExpansion
f\left(  s,x_{s}^{\mu},y_{s}^{\theta},z_{s}^{\theta},a\right)  \mu_{s}\left(
da\right)  \right\vert ^{2}ds\\
&  +C\varepsilon\mathbb{E}%
%TCIMACRO{\dint \nolimits_{t}^{T}}%
%BeginExpansion
{\displaystyle\int\nolimits_{t}^{T}}
%EndExpansion
\left\vert
%TCIMACRO{\dint \nolimits_{U}}%
%BeginExpansion
{\displaystyle\int\nolimits_{U}}
%EndExpansion
f\left(  s,x_{s}^{\mu},y_{s}^{\theta},z_{s}^{\theta},a\right)  \mu_{s}\left(
da\right)  -%
%TCIMACRO{\dint \nolimits_{U}}%
%BeginExpansion
{\displaystyle\int\nolimits_{U}}
%EndExpansion
f\left(  s,x_{s}^{\mu},y_{s}^{\mu},z_{s}^{\theta},a\right)  \mu_{s}\left(
da\right)  \right\vert ^{2}ds\\
&  +C\varepsilon\mathbb{E}%
%TCIMACRO{\dint \nolimits_{t}^{T}}%
%BeginExpansion
{\displaystyle\int\nolimits_{t}^{T}}
%EndExpansion
\left\vert
%TCIMACRO{\dint \nolimits_{U}}%
%BeginExpansion
{\displaystyle\int\nolimits_{U}}
%EndExpansion
f\left(  s,x_{s}^{\mu},y_{s}^{\mu},z_{s}^{\theta},a\right)  \mu_{s}\left(
da\right)  -%
%TCIMACRO{\dint \nolimits_{U}}%
%BeginExpansion
{\displaystyle\int\nolimits_{U}}
%EndExpansion
f\left(  s,x_{s}^{\mu},y_{s}^{\mu},z_{s}^{\mu},a\right)  \mu_{s}\left(
da\right)  \right\vert ds.
\end{align*}

By the definition of $\mu_{t}^{\theta}$, we have%
\begin{align*}
&  \mathbb{E}\left\vert y_{t}^{\theta}-y_{t}^{\mu}\right\vert ^{2}+\mathbb{E}%
%TCIMACRO{\dint \nolimits_{t}^{T}}%
%BeginExpansion
{\displaystyle\int\nolimits_{t}^{T}}
%EndExpansion
\left\vert z_{s}^{\theta}-z_{s}^{\mu}\right\vert ^{2}ds\\
&  \leq\mathbb{E}\left\vert \varphi\left(  x_{T}^{\theta}\right)
-\varphi\left(  x_{T}\right)  \right\vert ^{2}+%
%TCIMACRO{\QDOVERD{.}{.}{1}{\varepsilon}}%
%BeginExpansion
\genfrac{.}{.}{}{0}{1}{\varepsilon}%
%EndExpansion
\mathbb{E}%
%TCIMACRO{\dint \nolimits_{t}^{T}}%
%BeginExpansion
{\displaystyle\int\nolimits_{t}^{T}}
%EndExpansion
\left\vert y_{s}^{\theta}-y_{s}^{\mu}\right\vert ^{2}ds\\
&  +C\varepsilon\theta^{2}\mathbb{E}%
%TCIMACRO{\dint \nolimits_{t}^{T}}%
%BeginExpansion
{\displaystyle\int\nolimits_{t}^{T}}
%EndExpansion
\left\vert
%TCIMACRO{\dint \nolimits_{U}}%
%BeginExpansion
{\displaystyle\int\nolimits_{U}}
%EndExpansion
f\left(  s,x_{s}^{\theta},y_{s}^{\theta},z_{s}^{\theta},a\right)  q_{s}\left(
da\right)  -%
%TCIMACRO{\dint \nolimits_{U}}%
%BeginExpansion
{\displaystyle\int\nolimits_{U}}
%EndExpansion
f\left(  s,x_{s}^{\theta},y_{s}^{\theta},z_{s}^{\theta},a\right)  \mu
_{s}\left(  da\right)  \right\vert ^{2}ds\\
&  +C\varepsilon\mathbb{E}%
%TCIMACRO{\dint \nolimits_{t}^{T}}%
%BeginExpansion
{\displaystyle\int\nolimits_{t}^{T}}
%EndExpansion
\left\vert
%TCIMACRO{\dint \nolimits_{U}}%
%BeginExpansion
{\displaystyle\int\nolimits_{U}}
%EndExpansion
f\left(  s,x_{s}^{\theta},y_{s}^{\theta},z_{s}^{\theta},a\right)  \mu
_{s}\left(  da\right)  -%
%TCIMACRO{\dint \nolimits_{U}}%
%BeginExpansion
{\displaystyle\int\nolimits_{U}}
%EndExpansion
f\left(  s,x_{s}^{\mu},y_{s}^{\theta},z_{s}^{\theta},a\right)  \mu_{s}\left(
da\right)  \right\vert ^{2}ds\\
&  +C\varepsilon\mathbb{E}%
%TCIMACRO{\dint \nolimits_{t}^{T}}%
%BeginExpansion
{\displaystyle\int\nolimits_{t}^{T}}
%EndExpansion
\left\vert
%TCIMACRO{\dint \nolimits_{U}}%
%BeginExpansion
{\displaystyle\int\nolimits_{U}}
%EndExpansion
f\left(  s,x_{s}^{\mu},y_{s}^{\theta},z_{s}^{\theta},a\right)  \mu_{s}\left(
da\right)  -%
%TCIMACRO{\dint \nolimits_{U}}%
%BeginExpansion
{\displaystyle\int\nolimits_{U}}
%EndExpansion
f\left(  s,x_{s}^{\mu},y_{s}^{\mu},z_{s}^{\theta},a\right)  \mu_{s}\left(
da\right)  \right\vert ^{2}ds\\
&  +C\varepsilon\mathbb{E}%
%TCIMACRO{\dint \nolimits_{t}^{T}}%
%BeginExpansion
{\displaystyle\int\nolimits_{t}^{T}}
%EndExpansion
\left\vert
%TCIMACRO{\dint \nolimits_{U}}%
%BeginExpansion
{\displaystyle\int\nolimits_{U}}
%EndExpansion
f\left(  s,x_{s}^{\mu},y_{s}^{\mu},z_{s}^{\theta},a\right)  \mu_{s}\left(
da\right)  -%
%TCIMACRO{\dint \nolimits_{U}}%
%BeginExpansion
{\displaystyle\int\nolimits_{U}}
%EndExpansion
f\left(  s,x_{s}^{\mu},y_{s}^{\mu},z_{s}^{\mu},a\right)  \mu_{s}\left(
da\right)  \right\vert ^{2}ds.
\end{align*}

Since $\varphi$\ and $f$\ are uniformly Lipschitz with respect to $x,y,z$,
then%
\begin{align}
\mathbb{E}\left\vert y_{t}^{\theta}-y_{t}^{\mu}\right\vert ^{2}+\mathbb{E}%
%TCIMACRO{\dint \nolimits_{t}^{T}}%
%BeginExpansion
{\displaystyle\int\nolimits_{t}^{T}}
%EndExpansion
\left\vert z_{s}^{\theta}-z_{s}^{\mu}\right\vert ^{2}ds  &  \leq\left(
%TCIMACRO{\QDOVERD{.}{.}{1}{\varepsilon}}%
%BeginExpansion
\genfrac{.}{.}{}{0}{1}{\varepsilon}%
%EndExpansion
\mathbb{+}C\varepsilon\right)  \mathbb{E}%
%TCIMACRO{\dint \nolimits_{t}^{T}}%
%BeginExpansion
{\displaystyle\int\nolimits_{t}^{T}}
%EndExpansion
\left\vert y_{s}^{\theta}-y_{s}^{\mu}\right\vert ^{2}ds\\
&  +C\varepsilon\mathbb{E}%
%TCIMACRO{\dint \nolimits_{t}^{T}}%
%BeginExpansion
{\displaystyle\int\nolimits_{t}^{T}}
%EndExpansion
\left\vert z_{s}^{\theta}-z_{s}^{\mu}\right\vert ^{2}ds+\alpha_{t}^{\theta
},\nonumber
\end{align}
where $\alpha_{t}^{\theta}$\ is given by%
\[
\alpha_{t}^{\theta}=\mathbb{E}\left\vert x_{T}^{\theta}-x_{T}^{\mu}\right\vert
^{2}+C\varepsilon\mathbb{E}%
%TCIMACRO{\dint \nolimits_{t}^{T}}%
%BeginExpansion
{\displaystyle\int\nolimits_{t}^{T}}
%EndExpansion
\left\vert x_{s}^{\theta}-x_{s}^{\mu}\right\vert ^{2}ds+C\varepsilon\theta
^{2}.
\]

By $\left(  8\right)  $, we have%
\begin{equation}
\underset{\theta\rightarrow0}{\lim}\alpha_{t}^{\theta}=0.
\end{equation}

Choose $\varepsilon=%
%TCIMACRO{\QDOVERD{.}{.}{1}{2C}}%
%BeginExpansion
\genfrac{.}{.}{}{0}{1}{2C}%
%EndExpansion
$, then $\left(  11\right)  $\ becomes%
\[
\mathbb{E}\left\vert y_{t}^{\theta}-y_{t}^{\mu}\right\vert ^{2}+%
%TCIMACRO{\QDOVERD{.}{.}{1}{2}}%
%BeginExpansion
\genfrac{.}{.}{}{0}{1}{2}%
%EndExpansion
\mathbb{E}%
%TCIMACRO{\dint \nolimits_{t}^{T}}%
%BeginExpansion
{\displaystyle\int\nolimits_{t}^{T}}
%EndExpansion
\left\vert z_{s}^{\theta}-z_{s}^{\mu}\right\vert ^{2}ds\leq\left(  2C+%
%TCIMACRO{\QDOVERD{.}{.}{1}{2}}%
%BeginExpansion
\genfrac{.}{.}{}{0}{1}{2}%
%EndExpansion
\right)  \mathbb{E}%
%TCIMACRO{\dint \nolimits_{t}^{T}}%
%BeginExpansion
{\displaystyle\int\nolimits_{t}^{T}}
%EndExpansion
\left\vert y_{s}^{\theta}-y_{s}^{\mu}\right\vert ^{2}ds+\alpha_{t}^{\theta}.
\]

From the above inequality, we derive two inequalities%
\begin{equation}
\mathbb{E}\left\vert y_{t}^{\theta}-y_{t}^{\mu}\right\vert ^{2}\leq\left(  2C+%
%TCIMACRO{\QDOVERD{.}{.}{1}{2}}%
%BeginExpansion
\genfrac{.}{.}{}{0}{1}{2}%
%EndExpansion
\right)  \mathbb{E}%
%TCIMACRO{\dint \nolimits_{t}^{T}}%
%BeginExpansion
{\displaystyle\int\nolimits_{t}^{T}}
%EndExpansion
\left\vert y_{s}^{\theta}-y_{s}^{\mu}\right\vert ^{2}ds+\alpha_{t}^{\theta},
\end{equation}%
\begin{equation}
\mathbb{E}%
%TCIMACRO{\dint \nolimits_{t}^{T}}%
%BeginExpansion
{\displaystyle\int\nolimits_{t}^{T}}
%EndExpansion
\left\vert z_{s}^{\theta}-z_{s}^{\mu}\right\vert ^{2}ds\leq\left(
4C+1\right)  \mathbb{E}%
%TCIMACRO{\dint \nolimits_{t}^{T}}%
%BeginExpansion
{\displaystyle\int\nolimits_{t}^{T}}
%EndExpansion
\left\vert y_{s}^{\theta}-y_{s}^{\mu}\right\vert ^{2}ds+2\alpha_{t}^{\theta}.
\end{equation}

By using $\left(  12\right)  ,\ \left(  13\right)  $, Gronwall's lemma and
Bukholder-Davis-Gundy inequality, we obtain $\left(  9\right)  $.\ Finally,
$\left(  10\right)  $\ is derived from $\left(  9\right)  $\ and $\left(
12\right)  $.
\end{proof}

\begin{lemma}
\textit{Let }$\widetilde{x}_{t}$\textit{\ and }$\widetilde{y}_{t}$ are
respectively \textit{the solutions of the following linear equations (called
variational equations)}%
\begin{equation}
\left\{
\begin{array}
[c]{ll}%
d\widetilde{x}_{t}= &
%TCIMACRO{\dint \nolimits_{U}}%
%BeginExpansion
{\displaystyle\int\nolimits_{U}}
%EndExpansion
b_{x}\left(  t,x_{t}^{\mu},a\right)  \mu_{t}\left(  da\right)  \widetilde
{x}_{t}dt+%
%TCIMACRO{\dint \nolimits_{U}}%
%BeginExpansion
{\displaystyle\int\nolimits_{U}}
%EndExpansion
\sigma_{x}\left(  t,x_{t}^{\mu},a\right)  \mu_{t}\left(  da\right)
\widetilde{x}_{t}dW_{t}\\
& +\left[
%TCIMACRO{\dint \nolimits_{U}}%
%BeginExpansion
{\displaystyle\int\nolimits_{U}}
%EndExpansion
b\left(  t,x_{t}^{\mu},a\right)  \mu_{t}\left(  da\right)  -%
%TCIMACRO{\dint \nolimits_{U}}%
%BeginExpansion
{\displaystyle\int\nolimits_{U}}
%EndExpansion
b\left(  t,x_{t}^{\mu},a\right)  q_{t}\left(  da\right)  \right]  dt\\
& +\left[
%TCIMACRO{\dint \nolimits_{U}}%
%BeginExpansion
{\displaystyle\int\nolimits_{U}}
%EndExpansion
\sigma\left(  t,x_{t}^{\mu},a\right)  \mu_{t}\left(  da\right)  -%
%TCIMACRO{\dint \nolimits_{U}}%
%BeginExpansion
{\displaystyle\int\nolimits_{U}}
%EndExpansion
\sigma\left(  t,x_{t}^{\mu},a\right)  q_{t}\left(  da\right)  \right]
dW_{t},\\
\widetilde{x}_{0}= & 0.
\end{array}
\right.
\end{equation}
\ \
\begin{equation}
\left\{
\begin{array}
[c]{ll}%
d\widetilde{y}_{t}= & -%
%TCIMACRO{\dint \nolimits_{U}}%
%BeginExpansion
{\displaystyle\int\nolimits_{U}}
%EndExpansion
\left[  f_{x}\left(  t,x_{t}^{\mu},y_{t}^{\mu},z_{t}^{\mu},a\right)
\widetilde{x}_{t}+f_{y}\left(  t,x_{t}^{\mu},y_{t}^{\mu},z_{t}^{\mu},a\right)
\widetilde{y}_{t}+f_{z}\left(  t,x_{t}^{\mu},y_{t}^{\mu},z_{t}^{\mu},a\right)
\widetilde{z}_{t}\right]  \mu_{t}\left(  da\right)  dt\\
& +\left[
%TCIMACRO{\dint \nolimits_{U}}%
%BeginExpansion
{\displaystyle\int\nolimits_{U}}
%EndExpansion
f\left(  t,x_{t}^{\mu},y_{t}^{\mu},z_{t}^{\mu},a\right)  \mu_{t}\left(
da\right)  -%
%TCIMACRO{\dint \nolimits_{U}}%
%BeginExpansion
{\displaystyle\int\nolimits_{U}}
%EndExpansion
f\left(  t,x_{t}^{\mu},y_{t}^{\mu},z_{t}^{\mu},a\right)  q_{t}\left(
da\right)  \right]  dt+\widetilde{z}_{t}dW_{t},\\
\widetilde{y}_{T}= & \varphi_{x}\left(  x_{T}^{\mu}\right)  \widetilde{x}_{T}.
\end{array}
\right.
\end{equation}

T\textit{hen, the following estimations hold}%
\begin{align}
\underset{\theta\rightarrow0}{\lim}\mathbb{E}\left\vert
%TCIMACRO{\QDOVERD{.}{.}{x_{t}^{\theta}-x_{t}^{\mu}}{\theta}}%
%BeginExpansion
\genfrac{.}{.}{}{0}{x_{t}^{\theta}-x_{t}^{\mu}}{\theta}%
%EndExpansion
-\widetilde{x}_{t}\right\vert ^{2}  &  =0,\\
\underset{\theta\rightarrow0}{\lim}\mathbb{E}\left\vert
%TCIMACRO{\QDOVERD{.}{.}{y_{t}^{\theta}-y_{t}^{\mu}}{\theta}}%
%BeginExpansion
\genfrac{.}{.}{}{0}{y_{t}^{\theta}-y_{t}^{\mu}}{\theta}%
%EndExpansion
-\widetilde{y}_{t}\right\vert ^{2}  &  =0,\\
\underset{\theta\rightarrow0}{\lim}\mathbb{E}%
%TCIMACRO{\dint \nolimits_{0}^{T}}%
%BeginExpansion
{\displaystyle\int\nolimits_{0}^{T}}
%EndExpansion
\left\vert
%TCIMACRO{\QDOVERD{.}{.}{z_{t}^{\theta}-z_{t}^{\mu}}{\theta}}%
%BeginExpansion
\genfrac{.}{.}{}{0}{z_{t}^{\theta}-z_{t}^{\mu}}{\theta}%
%EndExpansion
-\widetilde{z}_{t}\right\vert ^{2}dt  &  =0.
\end{align}

\end{lemma}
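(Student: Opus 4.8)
The plan is to establish the three convergences in the order (16), then (17) and (18), in each case studying the remainder obtained by subtracting the candidate variational process from the corresponding difference quotient. First I would set $X_t^{\theta}=\theta^{-1}\left(x_t^{\theta}-x_t^{\mu}\right)-\widetilde{x}_t$ and derive the SDE it satisfies. Using $\mu_t^{\theta}=\mu_t+\theta\left(q_t-\mu_t\right)$, the drift of $\theta^{-1}\left(x_t^{\theta}-x_t^{\mu}\right)$ splits into $\theta^{-1}\int_U\left[b\left(t,x_t^{\theta},a\right)-b\left(t,x_t^{\mu},a\right)\right]\mu_t\left(da\right)$ plus a measure-perturbation term coming from $\mu_t^{\theta}-\mu_t=\theta\left(q_t-\mu_t\right)$, and similarly for $\sigma$. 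I would linearise the bracketed difference by the fundamental theorem of calculus, writing $b\left(t,x_t^{\theta},a\right)-b\left(t,x_t^{\mu},a\right)=\left(\int_0^1 b_x\left(t,x_t^{\mu}+\lambda\left(x_t^{\theta}-x_t^{\mu}\right),a\right)d\lambda\right)\left(x_t^{\theta}-x_t^{\mu}\right)$. Subtracting the variational equation (14), the remainder $X^{\theta}$ then solves a linear SDE driven by $X^{\theta}$ itself, with bounded coefficients $\int_U b_x\mu_t$ and $\int_U\sigma_x\mu_t$ evaluated along the segment, plus forcing terms that I expect to vanish.

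These forcing terms are of two kinds: first, $\int_U\left[b_x\left(t,x_t^{\mu}+\lambda\left(x_t^{\theta}-x_t^{\mu}\right),a\right)-b_x\left(t,x_t^{\mu},a\right)\right]\mu_t\left(da\right)\widetilde{x}_t$ together with its $\sigma$ analogue, and second, $\int_U\left[b\left(t,x_t^{\theta},a\right)-b\left(t,x_t^{\mu},a\right)\right]\left(q_t-\mu_t\right)\left(da\right)$. Applying It\^{o}'s formula to $\left\vert X_t^{\theta}\right\vert^2$, using the uniform boundedness of the derivatives from assumption (4), the Burkholder--Davis--Gundy inequality and Gronwall's lemma, the estimate (16) reduces to showing that the $L^2$-norms of these forcing terms tend to $0$ as $\theta\rightarrow0$. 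This is the main obstacle: I would combine the uniform convergence $\sup_t\mathbb{E}\left\vert x_t^{\theta}-x_t^{\mu}\right\vert^2\rightarrow0$ from (8) with the continuity of $b_x,\sigma_x$ in $\left(x,v\right)$ and their uniform boundedness, invoking dominated convergence (along a subsequence converging almost everywhere, if needed) to kill the first family, while the second is controlled directly by the Lipschitz bound on $b,\sigma$ and (8).

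For the backward pair I would set $Y_t^{\theta}=\theta^{-1}\left(y_t^{\theta}-y_t^{\mu}\right)-\widetilde{y}_t$ and $Z_t^{\theta}=\theta^{-1}\left(z_t^{\theta}-z_t^{\mu}\right)-\widetilde{z}_t$. The terminal value of $Y^{\theta}$ equals $\theta^{-1}\left(\varphi\left(x_T^{\theta}\right)-\varphi\left(x_T^{\mu}\right)\right)-\varphi_x\left(x_T^{\mu}\right)\widetilde{x}_T$, which tends to $0$ in $L^2$ by the smoothness of $\varphi$ together with (16). Linearising $f$ in $\left(x,y,z\right)$ by the same integral Taylor expansion produces coefficients $f_x,f_y,f_z$ evaluated along the segment, so that $\left(Y^{\theta},Z^{\theta}\right)$ solves a linear BSDE whose driver is $\int_U\left[f_x X^{\theta}+f_y Y^{\theta}+f_z Z^{\theta}\right]\mu_t\left(da\right)$ plus forcing terms of the same two types as above. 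Applying It\^{o}'s formula to $\left\vert Y_t^{\theta}\right\vert^2$ and using Young's inequality with a small parameter $\varepsilon$ to absorb the quadratic $Z^{\theta}$ contribution into the left-hand side, exactly as in the derivation of (9)--(10), I obtain by Gronwall's lemma the estimate (17) once the forcing terms are shown to vanish in $L^2$ (again via (8), (16) and dominated convergence). Finally (18) follows from the same energy inequality by transferring the $Z^{\theta}$ term to the left, in precise analogy with how (10) was deduced from (9) and (12).
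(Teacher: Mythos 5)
Your proposal is correct and follows essentially the same route as the paper: the paper cites its reference $\left[7\text{, Lemma 10}\right]$ for the forward estimate (which you instead sketch directly with the standard Taylor-linearisation, It\^{o}, Burkholder--Davis--Gundy and Gronwall argument), and for the backward pair it likewise writes the linear BSDE for $\left(Y^{\theta},Z^{\theta}\right)$ with a forcing term $\gamma^{\theta}$ shown to vanish in $L^{2}$, controls the terminal value via the convergence of $X_{T}^{\theta}$ and the smoothness of $\varphi$, and closes with It\^{o} on $\left\vert Y_{t}^{\theta}\right\vert ^{2}$, Young's inequality with $\varepsilon=1/(2C)$ to absorb the $Z^{\theta}$ term, and Gronwall. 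The only cosmetic differences are that the paper folds the $f_{x}X^{\theta}$ contribution into the forcing term rather than the driver, and that killing the measure-perturbation forcing in the backward equation also uses the convergences $(9)$--$(10)$ of the first lemma, not only $(8)$ and the forward estimate.
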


\begin{proof}
For simplicity, we put%
\begin{align}
X_{t}^{\theta}  &  =%
%TCIMACRO{\QDOVERD{.}{.}{x_{t}^{\theta}-x_{t}^{\mu}}{\theta}}%
%BeginExpansion
\genfrac{.}{.}{}{0}{x_{t}^{\theta}-x_{t}^{\mu}}{\theta}%
%EndExpansion
-\widetilde{x}_{t},\\
Y_{t}^{\theta}  &  =%
%TCIMACRO{\QDOVERD{.}{.}{y_{t}^{\theta}-y_{t}^{\mu}}{\theta}}%
%BeginExpansion
\genfrac{.}{.}{}{0}{y_{t}^{\theta}-y_{t}^{\mu}}{\theta}%
%EndExpansion
-\widetilde{y}_{t},\\
Z_{t}^{\theta}  &  =%
%TCIMACRO{\QDOVERD{.}{.}{z_{t}^{\theta}-z_{t}^{\mu}}{\theta}}%
%BeginExpansion
\genfrac{.}{.}{}{0}{z_{t}^{\theta}-z_{t}^{\mu}}{\theta}%
%EndExpansion
-\widetilde{z}_{t}.
\end{align}%
\[
\Lambda_{t}^{\theta}\left(  a\right)  =\left(  t,x_{t}^{\mu}+\lambda
\theta\left(  X_{t}^{\theta}+\widetilde{x}_{t}\right)  ,y_{t}^{\mu}%
+\lambda\theta\left(  Y_{t}^{\theta}+\widetilde{y}_{t}\right)  ,z_{t}^{\mu
}+\lambda\theta\left(  Z_{t}^{\theta}+\widetilde{z}_{t}\right)  ,a\right)  .
\]

i) $\left(  17\right)  $ is proved in $\left[  7\text{, Lemma 10, Page
2086}\right]  $

\ 

ii) Proof of $\left(  18\right)  $ and $\left(  19\right)  $.

By $\left(  21\right)  $ and $\left(  22\right)  $, we have the following
FBSDE%
\[
\left\{
\begin{array}
[c]{l}%
dY_{t}^{\theta}=\left(  F_{t}^{y}Y_{t}^{\theta}dt+F_{t}^{y}Z_{t}^{\theta
}-\gamma_{t}^{\theta}\right)  dt+Z_{t}^{\theta}dW_{t},\\
Y_{T}^{\theta}=\dfrac{\varphi\left(  x_{T}^{\theta}\right)  -\varphi\left(
x_{T}^{\mu}\right)  }{\theta}-\varphi_{x}\left(  x_{T}^{\mu}\right)
\widetilde{x}_{T},
\end{array}
\right.
\]
where,%
\begin{align*}
F_{t}^{y}  &  =-\int_{0}^{1}\int_{U}f_{y}\left(  \Lambda_{t}^{\theta}\left(
a\right)  \right)  \mu_{t}\left(  da\right)  d\lambda,\\
F_{t}^{z}  &  =-\int_{0}^{1}\int_{U}f_{z}\left(  \Lambda_{t}^{\theta}\left(
a\right)  \right)  \mu_{t}\left(  da\right)  d\lambda,
\end{align*}
and $\gamma_{t}^{\theta}$ is given by%
\begin{align*}
&  \gamma_{t}^{\theta}=\int_{t}^{T}\int_{U}f_{x}\left(  \Lambda_{s}^{\theta
}\left(  a\right)  \right)  X_{s}^{\theta}\mu_{s}\left(  da\right)  ds\\
&  +\int_{t}^{T}\int_{U}\left[  f_{x}\left(  \Lambda_{s}^{\theta}\left(
a\right)  \right)  \left(  x_{s}^{\theta}-x_{s}^{\mu}\right)  +f_{y}\left(
\Lambda_{s}^{\theta}\left(  a\right)  \right)  \left(  y_{s}^{\theta}%
-y_{s}^{\mu}\right)  +f_{z}\left(  \Lambda_{s}^{\theta}\left(  a\right)
\right)  \left(  z_{s}^{\theta}-z_{s}^{\mu}\right)  \right]  q_{s}\left(
da\right)  ds\\
&  -\int_{t}^{T}\int_{U}\left[  f_{x}\left(  \Lambda_{s}^{\theta}\left(
a\right)  \right)  \left(  x_{s}^{\theta}-x_{s}^{\mu}\right)  +f_{y}\left(
\Lambda_{s}^{\theta}\left(  a\right)  \right)  \left(  y_{s}^{\theta}%
-y_{s}^{\mu}\right)  +f_{z}\left(  \Lambda_{s}^{\theta}\left(  a\right)
\right)  \left(  z_{s}^{\theta}-z_{s}^{\mu}\right)  \right]  \mu_{s}\left(
da\right)  ds.
\end{align*}

Since $f_{x},$ $f_{y}$ and $f_{z}$ are continuous and bounded, then from
$\left(  8\right)  ,\ \left(  9\right)  ,\ \left(  10\right)  $ and $\left(
17\right)  $, we have%
\begin{equation}
\underset{\theta\rightarrow0}{\lim}\mathbb{E}\left\vert \gamma_{t}^{\theta
}\right\vert ^{2}=0.
\end{equation}

Applying It\^{o}'s formula to $\left(  Y_{t}^{\theta}\right)  ^{2}$, we get%
\[
\mathbb{E}\left\vert Y_{t}^{\theta}\right\vert ^{2}+\mathbb{E}%
%TCIMACRO{\dint \nolimits_{t}^{T}}%
%BeginExpansion
{\displaystyle\int\nolimits_{t}^{T}}
%EndExpansion
\left\vert Z_{s}^{\theta}\right\vert ^{2}ds=\mathbb{E}\left\vert Y_{T}%
^{\theta}\right\vert ^{2}+2\mathbb{E}%
%TCIMACRO{\dint \nolimits_{t}^{T}}%
%BeginExpansion
{\displaystyle\int\nolimits_{t}^{T}}
%EndExpansion
\left\vert Y_{s}^{\theta}\left(  F_{s}^{y}Y_{s}^{\theta}+F_{s}^{z}%
Z_{s}^{\theta}-\gamma_{s}^{\theta}\right)  \right\vert ds.
\]

By using the Young formula, for every $\varepsilon>0$, we have%
\begin{align*}
\mathbb{E}\left\vert Y_{t}^{\theta}\right\vert ^{2}+\mathbb{E}%
%TCIMACRO{\dint \nolimits_{t}^{T}}%
%BeginExpansion
{\displaystyle\int\nolimits_{t}^{T}}
%EndExpansion
\left\vert Z_{s}^{\theta}\right\vert ^{2}ds  &  \leq\mathbb{E}\left\vert
Y_{T}^{\theta}\right\vert ^{2}+%
%TCIMACRO{\QDOVERD{.}{.}{1}{\varepsilon}}%
%BeginExpansion
\genfrac{.}{.}{}{0}{1}{\varepsilon}%
%EndExpansion
\mathbb{E}%
%TCIMACRO{\dint \nolimits_{t}^{T}}%
%BeginExpansion
{\displaystyle\int\nolimits_{t}^{T}}
%EndExpansion
\left\vert Y_{s}^{\theta}\right\vert ^{2}ds+\varepsilon\mathbb{E}%
%TCIMACRO{\dint \nolimits_{t}^{T}}%
%BeginExpansion
{\displaystyle\int\nolimits_{t}^{T}}
%EndExpansion
\left\vert \left(  F_{s}^{y}Y_{s}^{\theta}+F_{s}^{z}Z_{s}^{\theta}-\gamma
_{s}^{\theta}\right)  \right\vert ^{2}ds\\
&  \leq\mathbb{E}\left\vert Y_{T}^{\theta}\right\vert ^{2}+%
%TCIMACRO{\QDOVERD{.}{.}{1}{\varepsilon}}%
%BeginExpansion
\genfrac{.}{.}{}{0}{1}{\varepsilon}%
%EndExpansion
\mathbb{E}%
%TCIMACRO{\dint \nolimits_{t}^{T}}%
%BeginExpansion
{\displaystyle\int\nolimits_{t}^{T}}
%EndExpansion
\left\vert Y_{s}^{\theta}\right\vert ^{2}ds+C\varepsilon\mathbb{E}%
%TCIMACRO{\dint \nolimits_{t}^{T}}%
%BeginExpansion
{\displaystyle\int\nolimits_{t}^{T}}
%EndExpansion
\left\vert F_{s}^{y}Y_{s}^{\theta}\right\vert ^{2}ds\\
&  +C\varepsilon\mathbb{E}%
%TCIMACRO{\dint \nolimits_{t}^{T}}%
%BeginExpansion
{\displaystyle\int\nolimits_{t}^{T}}
%EndExpansion
\left\vert F_{s}^{z}Z_{s}^{\theta}\right\vert ^{2}ds+C\varepsilon\mathbb{E}%
%TCIMACRO{\dint \nolimits_{t}^{T}}%
%BeginExpansion
{\displaystyle\int\nolimits_{t}^{T}}
%EndExpansion
\left\vert \gamma_{s}^{\theta}\right\vert ^{2}ds.
\end{align*}

Since $F_{t}^{y}$ and $F_{t}^{z}$ are bounded, then%
\[
\mathbb{E}\left\vert Y_{t}^{\theta}\right\vert ^{2}+\mathbb{E}%
%TCIMACRO{\dint \nolimits_{t}^{T}}%
%BeginExpansion
{\displaystyle\int\nolimits_{t}^{T}}
%EndExpansion
\left\vert Z_{s}^{\theta}\right\vert ^{2}ds\leq\left(
%TCIMACRO{\QDOVERD{.}{.}{1}{\varepsilon}}%
%BeginExpansion
\genfrac{.}{.}{}{0}{1}{\varepsilon}%
%EndExpansion
+C\varepsilon\right)  \mathbb{E}%
%TCIMACRO{\dint \nolimits_{t}^{T}}%
%BeginExpansion
{\displaystyle\int\nolimits_{t}^{T}}
%EndExpansion
\left\vert Y_{s}^{\theta}\right\vert ^{2}ds+C\varepsilon\mathbb{E}%
%TCIMACRO{\dint \nolimits_{t}^{T}}%
%BeginExpansion
{\displaystyle\int\nolimits_{t}^{T}}
%EndExpansion
\left\vert Z_{s}^{\theta}\right\vert ^{2}ds+\eta_{t}^{\theta},
\]
where%
\[
\eta_{t}^{\theta}=\mathbb{E}\left\vert Y_{T}^{\theta}\right\vert
^{2}+C\varepsilon\mathbb{E}%
%TCIMACRO{\dint \nolimits_{t}^{T}}%
%BeginExpansion
{\displaystyle\int\nolimits_{t}^{T}}
%EndExpansion
\left\vert \gamma_{s}^{\theta}\right\vert ^{2}ds.
\]

Choose $\varepsilon=%
%TCIMACRO{\QDOVERD{.}{.}{1}{2C}}%
%BeginExpansion
\genfrac{.}{.}{}{0}{1}{2C}%
%EndExpansion
$, then we have%
\[
\mathbb{E}\left\vert Y_{t}^{\theta}\right\vert ^{2}+%
%TCIMACRO{\QDOVERD{.}{.}{1}{2}}%
%BeginExpansion
\genfrac{.}{.}{}{0}{1}{2}%
%EndExpansion
\mathbb{E}%
%TCIMACRO{\dint \nolimits_{t}^{T}}%
%BeginExpansion
{\displaystyle\int\nolimits_{t}^{T}}
%EndExpansion
\left\vert Z_{s}^{\theta}\right\vert ^{2}ds\leq\left(  2C+%
%TCIMACRO{\QDOVERD{.}{.}{1}{2}}%
%BeginExpansion
\genfrac{.}{.}{}{0}{1}{2}%
%EndExpansion
\right)  \mathbb{E}%
%TCIMACRO{\dint \nolimits_{t}^{T}}%
%BeginExpansion
{\displaystyle\int\nolimits_{t}^{T}}
%EndExpansion
\left\vert Y_{s}^{\theta}\right\vert ^{2}ds+\eta_{t}^{\theta}.
\]

From the above inequality, we deduce two inequalities%
\begin{equation}
\mathbb{E}\left\vert Y_{t}^{\theta}\right\vert ^{2}\leq\left(  2C+%
%TCIMACRO{\QDOVERD{.}{.}{1}{2}}%
%BeginExpansion
\genfrac{.}{.}{}{0}{1}{2}%
%EndExpansion
\right)  \mathbb{E}%
%TCIMACRO{\dint \nolimits_{t}^{T}}%
%BeginExpansion
{\displaystyle\int\nolimits_{t}^{T}}
%EndExpansion
\left\vert Y_{s}^{\theta}\right\vert ^{2}ds+\eta_{t}^{\theta},
\end{equation}%
\begin{equation}
\mathbb{E}%
%TCIMACRO{\dint \nolimits_{t}^{T}}%
%BeginExpansion
{\displaystyle\int\nolimits_{t}^{T}}
%EndExpansion
\left\vert Z_{s}^{\theta}\right\vert ^{2}ds\leq\left(  4C+1\right)  \mathbb{E}%
%TCIMACRO{\dint \nolimits_{t}^{T}}%
%BeginExpansion
{\displaystyle\int\nolimits_{t}^{T}}
%EndExpansion
\left\vert Y_{s}^{\theta}\right\vert ^{2}ds+2\eta_{t}^{\theta}.
\end{equation}

On the other hand, we have%
\begin{align*}
\mathbb{E}\left\vert Y_{T}^{\theta}\right\vert ^{2}  &  =\mathbb{E}\left\vert
\widetilde{y}_{T}-%
%TCIMACRO{\QDOVERD{.}{.}{y_{T}^{\theta}-y_{T}^{\mu}}{\theta}}%
%BeginExpansion
\genfrac{.}{.}{}{0}{y_{T}^{\theta}-y_{T}^{\mu}}{\theta}%
%EndExpansion
\right\vert ^{2}\\
&  =\mathbb{E}\left\vert \varphi_{x}\left(  x_{T}^{\mu}\right)  \widetilde
{x}_{T}-%
%TCIMACRO{\QDOVERD{.}{.}{\varphi\left(  x_{T}^{\theta}\right)  -\varphi\left(
%x_{T}^{\mu}\right)  }{\theta}}%
%BeginExpansion
\genfrac{.}{.}{}{0}{\varphi\left(  x_{T}^{\theta}\right)  -\varphi\left(
x_{T}^{\mu}\right)  }{\theta}%
%EndExpansion
\right\vert ^{2}\\
&  \leq2\mathbb{E}%
%TCIMACRO{\dint \nolimits_{0}^{1}}%
%BeginExpansion
{\displaystyle\int\nolimits_{0}^{1}}
%EndExpansion
\left\vert \left[  \varphi_{x}\left(  x_{T}^{\mu}\right)  -\varphi_{x}\left(
x_{T}^{\mu}+\lambda\theta\left(  \widetilde{x}_{T}+X_{T}^{\theta}\right)
\right)  \right]  \widetilde{x}_{T}\right\vert ^{2}d\lambda\\
&  +2\mathbb{E}%
%TCIMACRO{\dint \nolimits_{0}^{1}}%
%BeginExpansion
{\displaystyle\int\nolimits_{0}^{1}}
%EndExpansion
\left\vert \varphi_{x}\left(  x_{T}^{\mu}+\lambda\theta\left(  \widetilde
{x}_{T}+X_{T}^{\theta}\right)  \right)  X_{T}^{\theta}\right\vert ^{2}%
d\lambda.
\end{align*}

By using $\left(  17\right)  $ and the fact that $\varphi_{x}$ is continuous
and bounded, we get%
\begin{equation}
\underset{\theta\rightarrow0}{\lim}\mathbb{E}\left\vert Y_{T}^{\theta
}\right\vert ^{2}=0.
\end{equation}

From $\left(  23\right)  $ and $\left(  26\right)  $, we deduce that%
\begin{equation}
\underset{\theta\rightarrow0}{\lim}\eta_{t}^{\theta}=0.
\end{equation}

Finally, by using $\left(  24\right)  ,\ \left(  27\right)  $, Gronwall's
lemma and Bukholder-Davis-Gundy inequality, we obtain $\left(  18\right)  $.
Finally $\left(  19\right)  $ is derived from $\left(  25\right)  ,\ \left(
27\right)  $ and $\left(  18\right)  $.
\end{proof}

\begin{lemma}
\textit{Let }$\mu$\textit{\ be an optimal control minimizing the functional
}$\mathcal{J}$\textit{\ over }$\mathcal{R}$\textit{\ and }$\left(  x_{t}^{\mu
},y_{t}^{\mu},z_{t}^{\mu}\right)  $\textit{\ the solution of }$\left(
1\right)  $\textit{\ associated with }$\mu$\textit{. Then for any }%
$q\in\mathcal{R}$\textit{, we have}
\begin{align}
0  &  \leq\mathbb{E}\left[  g_{x}\left(  x_{T}^{\mu}\right)  \widetilde{x}%
_{T}\right]  +\mathbb{E}\left[  h_{y}\left(  y_{0}^{\mu}\right)  \widetilde
{y}_{0}\right] \nonumber\\
&  +\mathbb{E}%
%TCIMACRO{\dint \nolimits_{0}^{T}}%
%BeginExpansion
{\displaystyle\int\nolimits_{0}^{T}}
%EndExpansion
\left[
%TCIMACRO{\dint \nolimits_{U}}%
%BeginExpansion
{\displaystyle\int\nolimits_{U}}
%EndExpansion
l\left(  t,x_{t}^{\mu},y_{t}^{\mu},z_{t}^{\mu},a\right)  q_{t}\left(
da\right)  -%
%TCIMACRO{\dint \nolimits_{U}}%
%BeginExpansion
{\displaystyle\int\nolimits_{U}}
%EndExpansion
l\left(  t,x_{t}^{\mu},y_{t}^{\mu},z_{t}^{\mu},a\right)  \mu_{t}\left(
da\right)  \right]  dt\\
&  +\mathbb{E}%
%TCIMACRO{\dint \nolimits_{0}^{T}}%
%BeginExpansion
{\displaystyle\int\nolimits_{0}^{T}}
%EndExpansion
\left[
%TCIMACRO{\dint \nolimits_{U}}%
%BeginExpansion
{\displaystyle\int\nolimits_{U}}
%EndExpansion
l_{x}\left(  t,x_{t}^{\mu},y_{t}^{\mu},z_{t}^{\mu},a\right)  \mu_{t}\left(
da\right)  \widetilde{x}_{t}+%
%TCIMACRO{\dint \nolimits_{U}}%
%BeginExpansion
{\displaystyle\int\nolimits_{U}}
%EndExpansion
l_{y}\left(  t,x_{t}^{\mu},y_{t}^{\mu},z_{t}^{\mu},a\right)  \mu_{t}\left(
da\right)  \widetilde{y}_{t}\right]  dt\nonumber\\
&  +\mathbb{E}%
%TCIMACRO{\dint \nolimits_{0}^{T}}%
%BeginExpansion
{\displaystyle\int\nolimits_{0}^{T}}
%EndExpansion%
%TCIMACRO{\dint \nolimits_{U}}%
%BeginExpansion
{\displaystyle\int\nolimits_{U}}
%EndExpansion
l_{z}\left(  t,x_{t}^{\mu},y_{t}^{\mu},z_{t}^{\mu},a\right)  \mu_{t}\left(
da\right)  \widetilde{z}_{t}dt.\nonumber
\end{align}

\end{lemma}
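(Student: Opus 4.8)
The plan is to exploit the optimality of $\mu$ together with the convexity of $\mathcal{R}$. Since $\mu^{\theta}=\mu+\theta\left(  q-\mu\right)  =\left(  1-\theta\right)  \mu+\theta q$ is again a relaxed control for every $\theta\in\left(  0,1\right]  $ and $\mu$ minimizes $\mathcal{J}$, we have $0\leq\mathcal{J}\left(  \mu^{\theta}\right)  -\mathcal{J}\left(  \mu\right)  $, hence
\[
0\leq\underset{\theta\rightarrow0}{\lim}\frac{\mathcal{J}\left(  \mu^{\theta}\right)  -\mathcal{J}\left(  \mu\right)  }{\theta}.
\]
The whole proof then consists in computing this right-hand derivative and identifying it with the right-hand side of $\left(  28\right)  $. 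First I would split $\mathcal{J}\left(  \mu^{\theta}\right)  -\mathcal{J}\left(  \mu\right)  $ into three groups: the terminal term $\mathbb{E}\left[  g\left(  x_{T}^{\theta}\right)  -g\left(  x_{T}^{\mu}\right)  \right]  $, the initial term $\mathbb{E}\left[  h\left(  y_{0}^{\theta}\right)  -h\left(  y_{0}^{\mu}\right)  \right]  $, and the running-cost term built on $l$.

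For the first two groups I would write, by the fundamental theorem of calculus,
\[
\frac{g\left(  x_{T}^{\theta}\right)  -g\left(  x_{T}^{\mu}\right)  }{\theta}=\int_{0}^{1}g_{x}\left(  x_{T}^{\mu}+\lambda\left(  x_{T}^{\theta}-x_{T}^{\mu}\right)  \right)  \frac{x_{T}^{\theta}-x_{T}^{\mu}}{\theta}\,d\lambda,
\]
and similarly for $h$. Letting $\theta\rightarrow0$ and using $\left(  8\right)  $ to send $x_{T}^{\theta}-x_{T}^{\mu}\rightarrow0$, $\left(  17\right)  $ to send $\left(  x_{T}^{\theta}-x_{T}^{\mu}\right)  /\theta\rightarrow\widetilde{x}_{T}$ in $L^{2}$, together with the continuity and uniform boundedness of $g_{x}$ from $\left(  4\right)  $, dominated convergence yields $\mathbb{E}\left[  g_{x}\left(  x_{T}^{\mu}\right)  \widetilde{x}_{T}\right]  $; the analogous argument with $\left(  9\right)  $ and $\left(  18\right)  $ gives $\mathbb{E}\left[  h_{y}\left(  y_{0}^{\mu}\right)  \widetilde{y}_{0}\right]  $.

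The running-cost term is the delicate one, and the key algebraic step is to separate the perturbation of the state from the perturbation of the control measure. Using $\mu_{t}^{\theta}=\mu_{t}+\theta\left(  q_{t}-\mu_{t}\right)  $ I would decompose
\begin{align*}
&  \int_{U}l\left(  t,x_{t}^{\theta},y_{t}^{\theta},z_{t}^{\theta},a\right)  \mu_{t}^{\theta}\left(  da\right)  -\int_{U}l\left(  t,x_{t}^{\mu},y_{t}^{\mu},z_{t}^{\mu},a\right)  \mu_{t}\left(  da\right)  \\
&  =\int_{U}\left[  l\left(  t,x_{t}^{\theta},y_{t}^{\theta},z_{t}^{\theta},a\right)  -l\left(  t,x_{t}^{\mu},y_{t}^{\mu},z_{t}^{\mu},a\right)  \right]  \mu_{t}\left(  da\right)  \\
&  \quad+\theta\int_{U}l\left(  t,x_{t}^{\theta},y_{t}^{\theta},z_{t}^{\theta},a\right)  \left(  q_{t}-\mu_{t}\right)  \left(  da\right)  .
\end{align*}
After dividing by $\theta$, the first integral is treated exactly as the terminal terms: a fundamental-theorem-of-calculus expansion in $\left(  x,y,z\right)  $ produces $l_{x},l_{y},l_{z}$ evaluated along the segment joining $\left(  x^{\mu},y^{\mu},z^{\mu}\right)  $ to $\left(  x^{\theta},y^{\theta},z^{\theta}\right)  $, and passing to the limit with $\left(  8\right)  $--$\left(  10\right)  $, $\left(  17\right)  $--$\left(  19\right)  $ and the continuity and boundedness of the derivatives gives $\int_{U}\left[  l_{x}\widetilde{x}_{t}+l_{y}\widetilde{y}_{t}+l_{z}\widetilde{z}_{t}\right]  \mu_{t}\left(  da\right)  $. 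The second integral, whose factor $\theta$ cancels the division, converges to $\int_{U}l\left(  t,x_{t}^{\mu},y_{t}^{\mu},z_{t}^{\mu},a\right)  \left(  q_{t}-\mu_{t}\right)  \left(  da\right)  $, that is, the difference of the $l$-integrals against $q_{t}$ and against $\mu_{t}$. Summing the three limits and using Fubini to pass the limit through $\mathbb{E}\int_{0}^{T}$ reproduces precisely $\left(  28\right)  $.

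The main obstacle is the justification of passing the limit $\theta\rightarrow0$ under the expectation and the time integral in the running-cost term: one has to exhibit a uniform $L^{1}$ domination so that dominated convergence applies. This rests on the uniform boundedness of the derivatives of $l$ from $\left(  4\right)  $ combined with the uniform-in-$\theta$ $L^{2}$ bounds on the difference quotients that are implicit in Lemmas $1$ and $2$. The measure-valued nature of the controls causes no extra trouble once the decomposition above is in place, since the relaxed coefficients are linear in the control variable and hence no second-order term in the control ever appears; this linearity is exactly what makes the global form of the inequality available directly, without any second-order expansion.
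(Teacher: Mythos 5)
Your proposal is correct and follows essentially the same route as the paper: the convex perturbation $\mu^{\theta}=\mu+\theta\left(q-\mu\right)$, the splitting of $\mathcal{J}\left(\mu^{\theta}\right)-\mathcal{J}\left(\mu\right)$ into terminal, initial and running-cost parts, the same decomposition of the running-cost increment into a state-perturbation term (expanded by the fundamental theorem of calculus) plus a term carrying the explicit factor $\theta$ from the linearity in the control measure, and the passage to the limit via Lemmas $1$ and $2$. The only cosmetic difference is that the paper controls the remainder $\rho_{t}^{\theta}$ by Cauchy--Schwarz and the $L^{2}$ estimates $\left(8\right)$--$\left(10\right)$, $\left(17\right)$--$\left(19\right)$ rather than by an explicit dominated-convergence argument, which amounts to the same thing.
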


\begin{proof}
Let $\mu$ be an optimal relaxed control minimizing the cost $\mathcal{J}$ over
$\mathcal{R}$, then we get%
\begin{align*}
0  &  \leq\mathbb{E}\left[  g\left(  x_{T}^{\theta}\right)  -g\left(
x_{T}^{\mu}\right)  \right]  +\mathbb{E}\left[  h\left(  y_{0}^{\theta
}\right)  -h\left(  y_{0}^{\mu}\right)  \right] \\
&  +\mathbb{E}%
%TCIMACRO{\dint \nolimits_{0}^{T}}%
%BeginExpansion
{\displaystyle\int\nolimits_{0}^{T}}
%EndExpansion
\left[
%TCIMACRO{\dint \nolimits_{U}}%
%BeginExpansion
{\displaystyle\int\nolimits_{U}}
%EndExpansion
l\left(  t,x_{t}^{\theta},y_{t}^{\theta},z_{t}^{\theta},a\right)  \mu
_{t}^{\theta}\left(  da\right)  -%
%TCIMACRO{\dint \nolimits_{U}}%
%BeginExpansion
{\displaystyle\int\nolimits_{U}}
%EndExpansion
l\left(  t,x_{t}^{\mu},y_{t}^{\mu},z_{t}^{\mu},a\right)  \mu_{t}\left(
da\right)  \right]  dt\\
&  =\mathbb{E}\left[  g\left(  x_{T}^{\theta}\right)  -g\left(  x_{T}^{\mu
}\right)  \right]  +\mathbb{E}\left[  h\left(  y_{0}^{\theta}\right)
-h\left(  y_{0}^{\mu}\right)  \right] \\
&  +\mathbb{E}%
%TCIMACRO{\dint \nolimits_{0}^{T}}%
%BeginExpansion
{\displaystyle\int\nolimits_{0}^{T}}
%EndExpansion
\left[
%TCIMACRO{\dint \nolimits_{U}}%
%BeginExpansion
{\displaystyle\int\nolimits_{U}}
%EndExpansion
l\left(  t,x_{t}^{\theta},y_{t}^{\theta},z_{t}^{\theta},a\right)  \mu
_{t}^{\theta}\left(  da\right)  -%
%TCIMACRO{\dint \nolimits_{U}}%
%BeginExpansion
{\displaystyle\int\nolimits_{U}}
%EndExpansion
l\left(  t,x_{t}^{\theta},y_{t}^{\theta},z_{t}^{\theta},a\right)  \mu
_{t}\left(  da\right)  \right]  dt\\
&  +\mathbb{E}%
%TCIMACRO{\dint \nolimits_{0}^{T}}%
%BeginExpansion
{\displaystyle\int\nolimits_{0}^{T}}
%EndExpansion
\left[
%TCIMACRO{\dint \nolimits_{U}}%
%BeginExpansion
{\displaystyle\int\nolimits_{U}}
%EndExpansion
l\left(  t,x_{t}^{\theta},y_{t}^{\theta},z_{t}^{\theta},a\right)  \mu
_{t}\left(  da\right)  -%
%TCIMACRO{\dint \nolimits_{U}}%
%BeginExpansion
{\displaystyle\int\nolimits_{U}}
%EndExpansion
l\left(  t,x_{t}^{\mu},y_{t}^{\mu},z_{t}^{\mu},a\right)  \mu_{t}\left(
da\right)  \right]  dt.
\end{align*}

From the definition of $\mu^{\theta}$, we get%
\begin{align*}
0  &  \leq\mathbb{E}\left[  g\left(  x_{T}^{\theta}\right)  -g\left(
x_{T}^{\mu}\right)  \right]  +\mathbb{E}\left[  h\left(  y_{0}^{\theta
}\right)  -h\left(  y_{0}^{\mu}\right)  \right] \\
&  +\theta\mathbb{E}%
%TCIMACRO{\dint \nolimits_{0}^{T}}%
%BeginExpansion
{\displaystyle\int\nolimits_{0}^{T}}
%EndExpansion
\left[
%TCIMACRO{\dint \nolimits_{U}}%
%BeginExpansion
{\displaystyle\int\nolimits_{U}}
%EndExpansion
l\left(  t,x_{t}^{\theta},y_{t}^{\theta},z_{t}^{\theta},a\right)  q_{t}\left(
da\right)  -%
%TCIMACRO{\dint \nolimits_{U}}%
%BeginExpansion
{\displaystyle\int\nolimits_{U}}
%EndExpansion
l\left(  t,x_{t}^{\theta},y_{t}^{\theta},z_{t}^{\theta},a\right)  \mu
_{t}\left(  da\right)  \right]  dt\\
&  +\mathbb{E}%
%TCIMACRO{\dint \nolimits_{0}^{T}}%
%BeginExpansion
{\displaystyle\int\nolimits_{0}^{T}}
%EndExpansion%
%TCIMACRO{\dint \nolimits_{U}}%
%BeginExpansion
{\displaystyle\int\nolimits_{U}}
%EndExpansion
\left[  l\left(  t,x_{t}^{\theta},y_{t}^{\theta},z_{t}^{\theta},a\right)
-l\left(  t,x_{t}^{\mu},y_{t}^{\mu},z_{t}^{\mu},a\right)  \right]  \mu
_{t}\left(  da\right)  dt.
\end{align*}

Then,%
\begin{align}
0  &  \leq\mathbb{E}%
%TCIMACRO{\dint \nolimits_{0}^{1}}%
%BeginExpansion
{\displaystyle\int\nolimits_{0}^{1}}
%EndExpansion
\left[  g_{x}\left(  x_{T}^{\mu}+\lambda\theta\left(  \widetilde{x}_{T}%
+X_{T}^{\theta}\right)  \right)  \widetilde{x}_{T}\right]  d\lambda\\
&  +\mathbb{E}%
%TCIMACRO{\dint \nolimits_{0}^{1}}%
%BeginExpansion
{\displaystyle\int\nolimits_{0}^{1}}
%EndExpansion
\left[  h_{y}\left(  y_{0}^{\mu}+\lambda\theta\left(  \widetilde{y}_{0}%
+Y_{0}^{\theta}\right)  \right)  \widetilde{y}_{0}\right]  d\lambda\nonumber\\
&  +\mathbb{E}%
%TCIMACRO{\dint \nolimits_{0}^{T}}%
%BeginExpansion
{\displaystyle\int\nolimits_{0}^{T}}
%EndExpansion%
%TCIMACRO{\dint \nolimits_{0}^{1}}%
%BeginExpansion
{\displaystyle\int\nolimits_{0}^{1}}
%EndExpansion%
%TCIMACRO{\dint \nolimits_{U}}%
%BeginExpansion
{\displaystyle\int\nolimits_{U}}
%EndExpansion
\left[  l_{x}\left(  \Lambda_{t}^{\theta}\left(  a\right)  \right)
\widetilde{x}_{t}+l_{y}\left(  \Lambda_{t}^{\theta}\left(  a\right)  \right)
\widetilde{y}_{t}+l_{z}\left(  \Lambda_{t}^{\theta}\left(  a\right)  \right)
\widetilde{z}_{t}\right]  \mu_{t}\left(  da\right)  d\lambda dt\nonumber\\
&  +\mathbb{E}%
%TCIMACRO{\dint \nolimits_{0}^{T}}%
%BeginExpansion
{\displaystyle\int\nolimits_{0}^{T}}
%EndExpansion
\left[
%TCIMACRO{\dint \nolimits_{U}}%
%BeginExpansion
{\displaystyle\int\nolimits_{U}}
%EndExpansion
l\left(  t,x_{t}^{\mu},y_{t}^{\mu},z_{t}^{\mu},a\right)  q_{t}\left(
da\right)  -%
%TCIMACRO{\dint \nolimits_{U}}%
%BeginExpansion
{\displaystyle\int\nolimits_{U}}
%EndExpansion
l\left(  t,x_{t}^{\mu},y_{t}^{\mu},z_{t}^{\mu},a\right)  \mu_{t}\left(
da\right)  \right]  dt+\rho_{t}^{\theta},\nonumber
\end{align}
where $\rho_{t}^{\theta}$ is given by%
\begin{align*}
&  \rho_{t}^{\theta}=\mathbb{E}%
%TCIMACRO{\dint \nolimits_{0}^{1}}%
%BeginExpansion
{\displaystyle\int\nolimits_{0}^{1}}
%EndExpansion
\left[  g_{x}\left(  x_{T}^{\mu}+\lambda\theta\left(  \widetilde{x}_{T}%
+X_{T}^{\theta}\right)  \right)  X_{T}^{\theta}\right]  d\lambda\\
&  +\mathbb{E}%
%TCIMACRO{\dint \nolimits_{0}^{1}}%
%BeginExpansion
{\displaystyle\int\nolimits_{0}^{1}}
%EndExpansion
\left[  h_{y}\left(  y_{0}^{\mu}+\lambda\theta\left(  \widetilde{y}_{0}%
+Y_{0}^{\theta}\right)  \right)  Y_{0}^{\theta}\right]  d\lambda\\
&  +\mathbb{E}%
%TCIMACRO{\dint \nolimits_{0}^{T}}%
%BeginExpansion
{\displaystyle\int\nolimits_{0}^{T}}
%EndExpansion%
%TCIMACRO{\dint \nolimits_{0}^{1}}%
%BeginExpansion
{\displaystyle\int\nolimits_{0}^{1}}
%EndExpansion%
%TCIMACRO{\dint \nolimits_{U}}%
%BeginExpansion
{\displaystyle\int\nolimits_{U}}
%EndExpansion
\left[  l_{x}\left(  \Lambda_{t}^{\theta}\left(  a\right)  \right)  \left(
x_{t}^{\theta}-x_{t}^{\mu}\right)  +l_{y}\left(  \Lambda_{t}^{\theta}\left(
a\right)  \right)  \left(  y_{t}^{\theta}-y_{t}^{\mu}\right)  +l_{z}\left(
\Lambda_{t}^{\theta}\left(  a\right)  \right)  \left(  z_{t}^{\theta}%
-z_{t}^{\mu}\right)  \right]  q_{t}\left(  da\right)  d\lambda dt\\
&  +\mathbb{E}%
%TCIMACRO{\dint \nolimits_{0}^{T}}%
%BeginExpansion
{\displaystyle\int\nolimits_{0}^{T}}
%EndExpansion%
%TCIMACRO{\dint \nolimits_{0}^{1}}%
%BeginExpansion
{\displaystyle\int\nolimits_{0}^{1}}
%EndExpansion%
%TCIMACRO{\dint \nolimits_{U}}%
%BeginExpansion
{\displaystyle\int\nolimits_{U}}
%EndExpansion
\left[  l_{x}\left(  \Lambda_{t}^{\theta}\left(  a\right)  \right)  \left(
x_{t}^{\theta}-x_{t}^{\mu}\right)  +l_{y}\left(  \Lambda_{t}^{\theta}\left(
a\right)  \right)  \left(  y_{t}^{\theta}-y_{t}^{\mu}\right)  +l_{z}\left(
\Lambda_{t}^{\theta}\left(  a\right)  \right)  \left(  z_{t}^{\theta}%
-z_{t}^{\mu}\right)  \right]  \mu_{t}\left(  da\right)  d\lambda dt\\
&  +\mathbb{E}%
%TCIMACRO{\dint \nolimits_{0}^{T}}%
%BeginExpansion
{\displaystyle\int\nolimits_{0}^{T}}
%EndExpansion%
%TCIMACRO{\dint \nolimits_{0}^{1}}%
%BeginExpansion
{\displaystyle\int\nolimits_{0}^{1}}
%EndExpansion%
%TCIMACRO{\dint \nolimits_{U}}%
%BeginExpansion
{\displaystyle\int\nolimits_{U}}
%EndExpansion
\left[  l_{x}\left(  \Lambda_{t}^{\theta}\left(  a\right)  \right)
X_{T}^{\theta}+l_{y}\left(  \Lambda_{t}^{\theta}\left(  a\right)  \right)
Y_{t}^{\theta}+l_{z}\left(  \Lambda_{t}^{\theta}\left(  a\right)  \right)
Z_{t}^{\theta}\right]  \mu_{t}\left(  da\right)  d\lambda dt.
\end{align*}

Since the derivatives $g_{x},\ h_{y},\ l_{x},\ l_{y}$ and $l_{z}$ are bounded,
then by using the Cauchy-Schwartz inequality, we have%
\begin{align*}
&  \rho_{t}^{\theta}\leq C\left(  \mathbb{E}\left\vert X_{T}^{\theta
}\right\vert ^{2}\right)  ^{1/2}+C\left(  \mathbb{E}\left\vert Y_{0}^{\theta
}\right\vert ^{2}\right)  ^{1/2}\\
&  +C\left(  \mathbb{E}%
%TCIMACRO{\dint \nolimits_{0}^{T}}%
%BeginExpansion
{\displaystyle\int\nolimits_{0}^{T}}
%EndExpansion
\left\vert x_{t}^{\theta}-x_{t}^{\mu}\right\vert ^{2}dt\right)  ^{1/2}%
+C\left(  \mathbb{E}%
%TCIMACRO{\dint \nolimits_{0}^{T}}%
%BeginExpansion
{\displaystyle\int\nolimits_{0}^{T}}
%EndExpansion
\left\vert y_{t}^{\theta}-y_{t}^{\mu}\right\vert ^{2}dt\right)  ^{1/2}%
+C\left(  \mathbb{E}%
%TCIMACRO{\dint \nolimits_{0}^{T}}%
%BeginExpansion
{\displaystyle\int\nolimits_{0}^{T}}
%EndExpansion
\left\vert z_{t}^{\theta}-z_{t}^{\mu}\right\vert ^{2}dt\right)  ^{1/2}\\
&  +C\left(  \mathbb{E}%
%TCIMACRO{\dint \nolimits_{0}^{T}}%
%BeginExpansion
{\displaystyle\int\nolimits_{0}^{T}}
%EndExpansion
\left\vert X_{t}^{\theta}\right\vert ^{2}dt\right)  ^{1/2}+C\left(  \mathbb{E}%
%TCIMACRO{\dint \nolimits_{0}^{T}}%
%BeginExpansion
{\displaystyle\int\nolimits_{0}^{T}}
%EndExpansion
\left\vert Y_{t}^{\theta}\right\vert ^{2}dt\right)  ^{1/2}+C\left(  \mathbb{E}%
%TCIMACRO{\dint \nolimits_{0}^{T}}%
%BeginExpansion
{\displaystyle\int\nolimits_{0}^{T}}
%EndExpansion
\left\vert Z_{t}^{\theta}\right\vert ^{2}dt\right)  ^{1/2}.
\end{align*}

By using $\left(  8\right)  ,\ \left(  9\right)  ,\ \left(  10\right)
,\ \left(  17\right)  ,\ \left(  18\right)  $ and $\left(  19\right)  $, we
get%
\[
\underset{\theta\rightarrow0}{\lim}\rho_{t}^{\theta}=0.
\]

Since $g_{x},\ h_{y},\ l_{x},\ l_{y}$ and $l_{z}$ are continuous and bounded,
the proof is completed by letting $\theta$ go to $0$ in $\left(  29\right)  $.
\end{proof}

\subsection{Necessary optimality conditions for relaxed controls}

Starting from the variational inequality $\left(  28\right)  $, we can now
state necessary optimality conditions for the relaxed control problem
$\left\{  \left(  5\right)  ,\left(  6\right)  ,\left(  7\right)  \right\}  $
in the global form.

\begin{theorem}
(Necessary optimality conditions for relaxed controls). \textit{Let }$\mu
$\textit{\ be an optimal relaxed control minimizing the functional
}$\mathcal{J}$\textit{\ over }$\mathcal{R}$\textit{\ and }$\left(  x^{\mu
},y^{\mu},z^{\mu}\right)  $\textit{\ the solution of }$\left(  5\right)
$\textit{\ controlled by }$\mu$\textit{. }Then, there exist three adapted
processes $\left(  k^{\mu},p^{\mu},P^{\mu}\right)  $\textit{, uniqe solution
of the following FBSDE system (called adjoint equations)}%
\begin{equation}
\left\{
\begin{array}
[c]{ll}%
dk_{t}^{\mu}= & \mathcal{H}_{y}\left(  t,x_{t}^{\mu},y_{t}^{\mu},z_{t}^{\mu
},\mu_{t},k_{t}^{\mu},p_{t}^{\mu},P_{t}^{\mu}\right)  dt\\
& +\mathcal{H}_{z}\left(  t,x_{t}^{\mu},y_{t}^{\mu},z_{t}^{\mu},\mu_{t}%
,k_{t}^{\mu},p_{t}^{\mu},P_{t}^{\mu}\right)  dW_{t},\\
k_{0}^{\mu}= & h_{y}\left(  y_{0}^{\mu}\right) \\
dp_{t}^{\mu}= & -\mathcal{H}_{x}\left(  t,x_{t}^{\mu},y_{t}^{\mu},z_{t}^{\mu
},\mu_{t},k_{t}^{\mu},p_{t}^{\mu},P_{t}^{\mu}\right)  dt+P_{t}^{\mu}dW_{t},\\
p_{T}^{\mu}= & g_{x}\left(  x_{T}^{\mu}\right)  +\varphi_{x}\left(  x_{T}%
^{\mu}\right)  k_{T}^{\mu},
\end{array}
\right.
\end{equation}
such that for every $q_{t}\in\mathbb{P}\left(  U\right)  $%
\begin{equation}
\mathcal{H}\left(  t,x_{t}^{\mu},y_{t}^{\mu},z_{t}^{\mu},\mu_{t},p_{t}^{\mu
},k_{t}^{\mu},P_{t}^{\mu}\right)  \leq\mathcal{H}\left(  t,x_{t}^{\mu}%
,y_{t}^{\mu},z_{t}^{\mu},q_{t},k_{t}^{\mu},p_{t}^{\mu},P_{t}^{\mu}\right)
,\ ae\ ,as,
\end{equation}
where the Hamiltonian $\mathcal{H}$\ is defined from $\left[  0,T\right]
\times\mathbb{R}^{n}\times\mathbb{R}^{m}\times\mathcal{M}_{m\times d}\left(
\mathbb{R}\right)  \times\mathbb{P}\left(  U\right)  \times\mathbb{R}%
^{m}\times\mathbb{R}^{n}\times\mathcal{M}_{n\times d}\left(  \mathbb{R}%
\right)  $\ into $\mathbb{R}$\ by%
\begin{align*}
\mathcal{H}\left(  t,x,y,z,q,k,p,P\right)   &  =%
%TCIMACRO{\dint \nolimits_{U}}%
%BeginExpansion
{\displaystyle\int\nolimits_{U}}
%EndExpansion
l\left(  t,x,y,z,a\right)  q_{t}\left(  da\right)  +p%
%TCIMACRO{\dint \nolimits_{U}}%
%BeginExpansion
{\displaystyle\int\nolimits_{U}}
%EndExpansion
b\left(  t,x,a\right)  q_{t}\left(  da\right) \\
&  +P%
%TCIMACRO{\dint \nolimits_{U}}%
%BeginExpansion
{\displaystyle\int\nolimits_{U}}
%EndExpansion
\sigma\left(  t,x,a\right)  q_{t}\left(  da\right)  +k%
%TCIMACRO{\dint \nolimits_{U}}%
%BeginExpansion
{\displaystyle\int\nolimits_{U}}
%EndExpansion
f\left(  t,x,y,z,a\right)  q_{t}\left(  da\right)  .
\end{align*}

\end{theorem}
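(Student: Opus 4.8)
The plan is to convert the variational inequality $(28)$ into the pointwise Hamiltonian inequality $(31)$ by a duality (integration-by-parts) argument, after first constructing the adjoint processes. That construction is easy once one observes that the system $(30)$ decouples triangularly: because $b$ and $\sigma$ do not depend on $(y,z)$, the coefficients $\mathcal{H}_{y}=\int_{U}l_{y}\mu_{t}+k\int_{U}f_{y}\mu_{t}$ and $\mathcal{H}_{z}=\int_{U}l_{z}\mu_{t}+k\int_{U}f_{z}\mu_{t}$ of the $k^{\mu}$-equation involve $k$ but neither $p$ nor $P$, so $k^{\mu}$ is the unique solution of an autonomous linear forward SDE with bounded coefficients and square-integrable datum $h_{y}(y_{0}^{\mu})$. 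With $k^{\mu}$ known, $(p^{\mu},P^{\mu})$ solves a linear backward SDE whose driver $-\mathcal{H}_{x}$ is affine in $(p,P)$, with $k^{\mu}$ entering as a source and square-integrable terminal value $g_{x}(x_{T}^{\mu})+\varphi_{x}(x_{T}^{\mu})k_{T}^{\mu}$; existence and uniqueness then follow from the classical linear BSDE theory under $(4)$.

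Next I would apply It\^{o}'s formula to $p_{t}^{\mu}\widetilde{x}_{t}$, inserting the forward variational equation $(15)$ and the $p^{\mu}$-equation. The first-order terms in $b_{x}$ and $\sigma_{x}$ cancel against the matching pieces of $\mathcal{H}_{x}$, and after taking expectations (the martingale parts vanish) and using $\widetilde{x}_{0}=0$, $p_{T}^{\mu}=g_{x}+\varphi_{x}k_{T}^{\mu}$, and the terminal relation $\varphi_{x}(x_{T}^{\mu})\widetilde{x}_{T}=\widetilde{y}_{T}$ from $(16)$, I obtain
\begin{align*}
\mathbb{E}\big[g_{x}(x_{T}^{\mu})\widetilde{x}_{T}\big]+\mathbb{E}\big[k_{T}^{\mu}\widetilde{y}_{T}\big]=\mathbb{E}\int_{0}^{T}\Big\{p_{t}^{\mu}\Big(\int_{U}b\,q_{t}-\int_{U}b\,\mu_{t}\Big)+P_{t}^{\mu}\Big(\int_{U}\sigma\,q_{t}-\int_{U}\sigma\,\mu_{t}\Big)\\
-\widetilde{x}_{t}\int_{U}l_{x}\mu_{t}-k_{t}^{\mu}\widetilde{x}_{t}\int_{U}f_{x}\mu_{t}\Big\}\,dt.
\end{align*}
Doing the same for $k_{t}^{\mu}\widetilde{y}_{t}$ with the backward equation $(16)$ and the $k^{\mu}$-equation, the $f_{y}$ and $f_{z}$ terms cancel against $\mathcal{H}_{y},\mathcal{H}_{z}$, and with $k_{0}^{\mu}=h_{y}(y_{0}^{\mu})$,
\begin{align*}
\mathbb{E}\big[k_{T}^{\mu}\widetilde{y}_{T}\big]-\mathbb{E}\big[h_{y}(y_{0}^{\mu})\widetilde{y}_{0}\big]=\mathbb{E}\int_{0}^{T}\Big\{k_{t}^{\mu}\Big(\int_{U}f\,q_{t}-\int_{U}f\,\mu_{t}\Big)-k_{t}^{\mu}\widetilde{x}_{t}\int_{U}f_{x}\mu_{t}\\
+\widetilde{y}_{t}\int_{U}l_{y}\mu_{t}+\widetilde{z}_{t}\int_{U}l_{z}\mu_{t}\Big\}\,dt.
\end{align*}

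Then, subtracting the second identity from the first removes $\mathbb{E}[k_{T}^{\mu}\widetilde{y}_{T}]$ and the $f_{x}$ cross-terms, and expresses $\mathbb{E}[g_{x}\widetilde{x}_{T}]+\mathbb{E}[h_{y}\widetilde{y}_{0}]$ through the relaxed increments of $b,\sigma,f$ and the gradients $l_{x},l_{y},l_{z}$. Substituting into $(28)$, the three products $\int_{U}l_{x}\mu\,\widetilde{x}$, $\int_{U}l_{y}\mu\,\widetilde{y}$, $\int_{U}l_{z}\mu\,\widetilde{z}$ already present in $(28)$ are cancelled by their duality counterparts, so every occurrence of $\widetilde{x},\widetilde{y},\widetilde{z}$ drops out and, by the definition of $\mathcal{H}$ together with the linearity of the coefficients in the control measure, $(28)$ reduces to
\[
0\leq\mathbb{E}\int_{0}^{T}\big[\mathcal{H}(t,x_{t}^{\mu},y_{t}^{\mu},z_{t}^{\mu},q_{t},k_{t}^{\mu},p_{t}^{\mu},P_{t}^{\mu})-\mathcal{H}(t,x_{t}^{\mu},y_{t}^{\mu},z_{t}^{\mu},\mu_{t},k_{t}^{\mu},p_{t}^{\mu},P_{t}^{\mu})\big]\,dt
\]
for every $q\in\mathcal{R}$.

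Finally I would localize. Setting $\Phi(t,a)=l+p_{t}^{\mu}b+P_{t}^{\mu}\sigma+k_{t}^{\mu}f$ along the optimal trajectory, so that $\mathcal{H}(\cdot)=\int_{U}\Phi\,(\cdot)(da)$ is linear in the measure, the last inequality reads $\mathbb{E}\int_{0}^{T}[\int_{U}\Phi\,q_{t}(da)-\int_{U}\Phi\,\mu_{t}(da)]\,dt\geq0$ for all admissible $q$. Testing with controls that agree with $\mu$ outside a small interval $[s,s+\varepsilon]$ and equal an arbitrary $\mathcal{F}_{s}$-measurable $\nu\in\mathbb{P}(U)$ on it, dividing by $\varepsilon$ and letting $\varepsilon\to0$ at a Lebesgue point $s$, yields $\mathbb{E}[\int_{U}\Phi(s,a)\mu_{s}(da)]\leq\mathbb{E}[\int_{U}\Phi(s,a)\nu(da)]$ for a.e. $s$; ranging $\nu$ over a countable family dense for the stable topology and invoking a measurable-selection argument promotes this to the $\mathbb{P}$-a.s., a.e.-$t$ inequality $(31)$. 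I expect this final localization to be the \textbf{main obstacle}, since it requires interchanging the Lebesgue limit with the expectation and constructing the perturbed controls so that they remain progressively measurable — precisely where the convexity of $\mathbb{P}(U)$ and the adaptedness built into the definition of $\mathcal{R}$ must be used.
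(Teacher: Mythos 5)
Your proposal follows essentially the same route as the paper's proof: apply It\^{o}'s formula to $p_{t}^{\mu}\widetilde{x}_{t}$ and $k_{t}^{\mu}\widetilde{y}_{t}$, use the duality identities to convert the variational inequality $(28)$ into the time-integrated inequality $0\leq\mathbb{E}\int_{0}^{T}[\mathcal{H}(q_{t})-\mathcal{H}(\mu_{t})]\,dt$, then localize. Your localization (perturbation on $[s,s+\varepsilon]$, Lebesgue points, a countable family dense for the stable topology) replaces the paper's shorter device of testing with $\pi_{t}=q_{t}\mathbf{1}_{F}+\mu_{t}\mathbf{1}_{\Omega-F}$ for arbitrary $F\in\mathcal{F}_{t}$ and concluding by conditional expectation; yours is in fact more explicit about the time variable, which the paper glosses over, and your observation that $(30)$ decouples triangularly ($k^{\mu}$ an autonomous linear forward SDE, then $(p^{\mu},P^{\mu})$ a linear BSDE) supplies the well-posedness the paper merely asserts. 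One slip to fix: in your identity for $\mathbb{E}[k_{T}^{\mu}\widetilde{y}_{T}]-\mathbb{E}[h_{y}(y_{0}^{\mu})\widetilde{y}_{0}]$ the increment term should be $-k_{t}^{\mu}\bigl(\int_{U}f\,q_{t}(da)-\int_{U}f\,\mu_{t}(da)\bigr)$ rather than $+$, since the driver of the backward variational equation carries $-f$; with your sign, after subtracting the two identities the $f$-increment enters the final inequality with the wrong sign and the expression does not assemble into $\mathcal{H}(q)-\mathcal{H}(\mu)$. The correction is immediate and does not change the architecture of the argument.
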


\begin{proof}
Since$\ k_{0}^{\mu}=h_{y}(y_{0}^{\mu})$ and $p_{T}^{\mu}=g_{x}\left(
x_{T}^{\mu}\right)  +\varphi_{x}\left(  x_{T}^{\mu}\right)  k_{T}^{\mu}$, then
$\left(  28\right)  $ becomes%
\begin{align}
0  &  \leq\mathbb{E}\left[  p_{T}^{\mu}\widetilde{x}_{T}\right]
+\mathbb{E}\left[  k_{0}^{\mu}\widetilde{y}_{0}\right]  -\mathbb{E}\left[
\varphi_{x}\left(  x_{T}^{\mu}\right)  k_{T}^{\mu}\right]  +\mathbb{E}%
%TCIMACRO{\dint \nolimits_{0}^{T}}%
%BeginExpansion
{\displaystyle\int\nolimits_{0}^{T}}
%EndExpansion%
%TCIMACRO{\dint \nolimits_{U}}%
%BeginExpansion
{\displaystyle\int\nolimits_{U}}
%EndExpansion
l_{x}\left(  t,x_{t}^{\mu},y_{t}^{\mu},z_{t}^{\mu},a\right)  \widetilde{x}%
_{t}\mu_{t}\left(  da\right)  dt\nonumber\\
&  +\mathbb{E}%
%TCIMACRO{\dint \nolimits_{0}^{T}}%
%BeginExpansion
{\displaystyle\int\nolimits_{0}^{T}}
%EndExpansion%
%TCIMACRO{\dint \nolimits_{U}}%
%BeginExpansion
{\displaystyle\int\nolimits_{U}}
%EndExpansion
\left[  l_{y}\left(  t,x_{t}^{\mu},y_{t}^{\mu},z_{t}^{\mu},a\right)
\widetilde{y}_{t}+l_{z}\left(  t,x_{t}^{\mu},y_{t}^{\mu},z_{t}^{\mu},a\right)
\widetilde{z}_{t}\right]  \mu_{t}\left(  da\right)  dt\\
&  +\mathbb{E}%
%TCIMACRO{\dint \nolimits_{0}^{T}}%
%BeginExpansion
{\displaystyle\int\nolimits_{0}^{T}}
%EndExpansion
\left[
%TCIMACRO{\dint \nolimits_{U}}%
%BeginExpansion
{\displaystyle\int\nolimits_{U}}
%EndExpansion
l\left(  t,x_{t}^{\mu},y_{t}^{\mu},z_{t}^{\mu},a\right)  q_{t}\left(
da\right)  -%
%TCIMACRO{\dint \nolimits_{U}}%
%BeginExpansion
{\displaystyle\int\nolimits_{U}}
%EndExpansion
l\left(  t,x_{t}^{\mu},y_{t}^{\mu},z_{t}^{\mu},a\right)  \mu_{t}\left(
da\right)  \right]  dt.\nonumber
\end{align}

By applying It\^{o}'s formula to $\left(  p_{t}^{\mu}\widetilde{x}_{t}\right)
$ and $\left(  k_{t}^{\mu}\widetilde{y}_{t}\right)  $, we have%
\begin{align*}
\mathbb{E}\left[  p_{T}^{\mu}\widetilde{x}_{T}\right]   &  =-\mathbb{E}%
%TCIMACRO{\dint \nolimits_{0}^{T}}%
%BeginExpansion
{\displaystyle\int\nolimits_{0}^{T}}
%EndExpansion
\left[
%TCIMACRO{\dint \nolimits_{U}}%
%BeginExpansion
{\displaystyle\int\nolimits_{U}}
%EndExpansion
f_{x}\left(  t,x_{t}^{\mu},y_{t}^{\mu},z_{t}^{\mu},a\right)  \mu_{t}\left(
da\right)  k_{t}^{\mu}+%
%TCIMACRO{\dint \nolimits_{U}}%
%BeginExpansion
{\displaystyle\int\nolimits_{U}}
%EndExpansion
l_{x}\left(  t,x_{t}^{\mu},y_{t}^{\mu},z_{t}^{\mu},a\right)  \mu_{t}\left(
da\right)  \right]  \widetilde{x}_{t}dt\\
&  +\mathbb{E}%
%TCIMACRO{\dint \nolimits_{0}^{T}}%
%BeginExpansion
{\displaystyle\int\nolimits_{0}^{T}}
%EndExpansion
p_{t}^{\mu}\left[
%TCIMACRO{\dint \nolimits_{U}}%
%BeginExpansion
{\displaystyle\int\nolimits_{U}}
%EndExpansion
b\left(  t,x_{t}^{\mu},a\right)  q_{t}\left(  da\right)  -%
%TCIMACRO{\dint \nolimits_{U}}%
%BeginExpansion
{\displaystyle\int\nolimits_{U}}
%EndExpansion
b\left(  t,x_{t}^{\mu},a\right)  \mu_{t}\left(  da\right)  \right]  dt\\
&  +\mathbb{E}%
%TCIMACRO{\dint \nolimits_{0}^{T}}%
%BeginExpansion
{\displaystyle\int\nolimits_{0}^{T}}
%EndExpansion
P_{t}^{\mu}\left[
%TCIMACRO{\dint \nolimits_{U}}%
%BeginExpansion
{\displaystyle\int\nolimits_{U}}
%EndExpansion
\sigma\left(  t,x_{t}^{\mu},a\right)  q_{t}\left(  da\right)  -%
%TCIMACRO{\dint \nolimits_{U}}%
%BeginExpansion
{\displaystyle\int\nolimits_{U}}
%EndExpansion
\sigma\left(  t,x_{t}^{\mu},a\right)  \mu_{t}\left(  da\right)  \right]  dt.
\end{align*}%
\begin{align*}
\mathbb{E}\left[  k_{0}^{\mu}\widetilde{y}_{0}\right]   &  =\mathbb{E}\left[
k_{T}^{\mu}\widetilde{y}_{T}\right]  -\mathbb{E}\left[
%TCIMACRO{\dint \nolimits_{0}^{T}}%
%BeginExpansion
{\displaystyle\int\nolimits_{0}^{T}}
%EndExpansion%
%TCIMACRO{\dint \nolimits_{U}}%
%BeginExpansion
{\displaystyle\int\nolimits_{U}}
%EndExpansion
l_{y}\left(  t,x_{t}^{\mu},y_{t}^{\mu},z_{t}^{\mu},a\right)  \mu_{t}\left(
da\right)  \widetilde{y}_{t}+%
%TCIMACRO{\dint \nolimits_{U}}%
%BeginExpansion
{\displaystyle\int\nolimits_{U}}
%EndExpansion
f_{x}\left(  t,x_{t}^{\mu},y_{t}^{\mu},z_{t}^{\mu},a\right)  \mu_{t}\left(
da\right)  \widetilde{x}_{t}k_{t}^{\mu}\right]  dt\\
&  +\mathbb{E}%
%TCIMACRO{\dint \nolimits_{0}^{T}}%
%BeginExpansion
{\displaystyle\int\nolimits_{0}^{T}}
%EndExpansion
k_{t}^{\mu}\left[
%TCIMACRO{\dint \nolimits_{U}}%
%BeginExpansion
{\displaystyle\int\nolimits_{U}}
%EndExpansion
f\left(  t,x_{t}^{\mu},y_{t}^{\mu},z_{t}^{\mu},a\right)  q_{t}\left(
da\right)  -%
%TCIMACRO{\dint \nolimits_{U}}%
%BeginExpansion
{\displaystyle\int\nolimits_{U}}
%EndExpansion
f\left(  t,x_{t}^{\mu},y_{t}^{\mu},z_{t}^{\mu},a\right)  \mu_{t}\left(
da\right)  \right]  dt\\
&  -\mathbb{E}%
%TCIMACRO{\dint \nolimits_{0}^{T}}%
%BeginExpansion
{\displaystyle\int\nolimits_{0}^{T}}
%EndExpansion%
%TCIMACRO{\dint \nolimits_{U}}%
%BeginExpansion
{\displaystyle\int\nolimits_{U}}
%EndExpansion
l_{z}\left(  t,x_{t}^{\mu},y_{t}^{\mu},z_{t}^{\mu},a\right)  \mu_{t}\left(
da\right)  \widetilde{z}_{t}dt.
\end{align*}

Then for every $q\in\mathcal{R}$, $\left(  32\right)  $ becomes%
\[
0\leq\mathbb{E}%
%TCIMACRO{\dint \nolimits_{0}^{T}}%
%BeginExpansion
{\displaystyle\int\nolimits_{0}^{T}}
%EndExpansion
\left[  \mathcal{H}\left(  t,x_{t}^{\mu},y_{t}^{\mu},z_{t}^{\mu},q_{t}%
,k_{t}^{\mu},p_{t}^{\mu},P_{t}^{\mu}\right)  -\mathcal{H}\left(  t,x_{t}^{\mu
},y_{t}^{\mu},z_{t}^{\mu},\mu_{t},k_{t}^{\mu},p_{t}^{\mu},P_{t}^{\mu}\right)
\right]  dt.
\]

Now, let $q\in\mathcal{R}$ and $F$ be an arbitrary element of the $\sigma
$-algebra $\mathcal{F}_{t}$, and set%
\[
\pi_{t}=q_{t}\mathbf{1}_{F}+\mu_{t}\mathbf{1}_{%
%TCIMACRO{\U{3a9} }%
%BeginExpansion
\Omega
%EndExpansion
-F}.
\]

It is obvious that $\pi$ is an admissible relaxed control.

Applying the above inequality with $\pi$, we get
\[
0\leq\mathbb{E}[\mathbf{1}_{F}\left\{  \mathcal{H}\left(  t,x_{t}^{\mu}%
,y_{t}^{\mu},z_{t}^{\mu},q_{t},k_{t}^{\mu},p_{t}^{\mu},P_{t}^{\mu}\right)
-\mathcal{H}\left(  t,x_{t}^{\mu},y_{t}^{\mu},z_{t}^{\mu},\mu_{t},k_{t}^{\mu
},p_{t}^{\mu},P_{t}^{\mu}\right)  \right\}  ],\ \forall F\in\mathcal{F}_{t}.
\]

Which implies that
\[
0\leq\mathbb{E}[\mathcal{H}\left(  t,x_{t}^{\mu},y_{t}^{\mu},z_{t}^{\mu}%
,q_{t},k_{t}^{\mu},p_{t}^{\mu},P_{t}^{\mu}\right)  -\mathcal{H}\left(
t,x_{t}^{\mu},y_{t}^{\mu},z_{t}^{\mu},\mu_{t},k_{t}^{\mu},p_{t}^{\mu}%
,P_{t}^{\mu}\right)  \ /\ \mathcal{F}_{t}].
\]

The quantity inside the conditional expectation is $\mathcal{F}_{t}%
$-measurable, and thus the result follows immediately.
\end{proof}

\subsection{Sufficient optimality conditions for relaxed controls}

In this subsection, we study when necessary optimality conditions $\left(
31\right)  $ becomes sufficient. For any $q\in\mathcal{R}$, we denote by
$\left(  x^{q},y^{q},z^{q}\right)  $ the solution of equation $\left(
5\right)  $ controlled by $q.$

\begin{theorem}
(Sufficient optimality conditions for relaxed controls). Assume that the
functions $g,\ h$ and $\left(  x,y,z\right)  \longmapsto\mathcal{H}\left(
t,x,y,z,q,k,p,P\right)  $ are convex, and for any $q\in\mathcal{R}$,
$y_{T}^{q}=\xi,$ where $\xi$\ is an $m$-dimensional $\mathcal{F}_{T}%
$-measurable random variable such that%
\[
\mathbb{E}\left\vert \xi\right\vert ^{2}<\infty.
\]

Then, $\mu$ is an optimal solution of the relaxed control problem $\left\{
\left(  5\right)  ,\left(  6\right)  ,\left(  7\right)  \right\}  $, if it
satisfies $\left(  31\right)  .$
\end{theorem}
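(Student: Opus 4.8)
The plan is to show directly that $\mathcal{J}(q)\geq\mathcal{J}(\mu)$ for every $q\in\mathcal{R}$, using the adjoint processes $(k^{\mu},p^{\mu},P^{\mu})$ solving $(30)$ together with a duality computation. Write $(x^{q},y^{q},z^{q})$ for the state associated with $q$ and abbreviate the increments by $\Delta x_t=x_t^{q}-x_t^{\mu}$, $\Delta y_t=y_t^{q}-y_t^{\mu}$, $\Delta z_t=z_t^{q}-z_t^{\mu}$. First I would split $\mathcal{J}(q)-\mathcal{J}(\mu)$ into the terminal term $\mathbb{E}[g(x_T^{q})-g(x_T^{\mu})]$, the initial term $\mathbb{E}[h(y_0^{q})-h(y_0^{\mu})]$ and the running term $\mathbb{E}\int_0^T[\overline{l}(t,x^{q},y^{q},z^{q},q)-\overline{l}(t,x^{\mu},y^{\mu},z^{\mu},\mu)]\,dt$, and bound the first two from below by their linearizations $\mathbb{E}[g_x(x_T^{\mu})\Delta x_T]$ and $\mathbb{E}[h_y(y_0^{\mu})\Delta y_0]$; this is exactly where convexity of $g$ and $h$ is used.

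Next I would eliminate the drift and diffusion data in favour of the Hamiltonian. By the definition of $\mathcal{H}$, the integrand $\overline{l}(t,\cdot,q)$ equals $\mathcal{H}(t,\cdot,q)-p^{\mu}\overline{b}-P^{\mu}\overline{\sigma}-k^{\mu}\overline{f}$, so the running term becomes a difference of Hamiltonians minus the coupling terms $p^{\mu}(\overline{b}(\cdot,q)-\overline{b}(\cdot,\mu))$, $P^{\mu}(\overline{\sigma}(\cdot,q)-\overline{\sigma}(\cdot,\mu))$ and $k^{\mu}(\overline{f}(\cdot,q)-\overline{f}(\cdot,\mu))$. I would then apply It\^o's formula to $p_t^{\mu}\Delta x_t$ and to $k_t^{\mu}\Delta y_t$ on $[0,T]$ and take expectations, so that these coupling terms are traded for boundary terms and for the linear terms $\mathcal{H}_x^{\mu}\Delta x$, $\mathcal{H}_y^{\mu}\Delta y$, $\mathcal{H}_z^{\mu}\Delta z$ produced by the adjoint drifts. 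The hypothesis $y_T^{q}=\xi$ enters precisely here: since the terminal value does not depend on the control (hence not on $x_T$), one has $\Delta y_T=0$ and the terminal coupling through $\varphi$ is inert, so the boundary contributions of the two It\^o expansions cancel against the linearizations $\mathbb{E}[g_x(x_T^{\mu})\Delta x_T]$ and $\mathbb{E}[h_y(y_0^{\mu})\Delta y_0]$, using also $\Delta x_0=0$ and $k_0^{\mu}=h_y(y_0^{\mu})$.

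After these cancellations the whole difference collapses to
\[
\mathcal{J}(q)-\mathcal{J}(\mu)\ \geq\ \mathbb{E}\!\int_0^T\Big[\mathcal{H}(t,x_t^{q},y_t^{q},z_t^{q},q_t)-\mathcal{H}(t,x_t^{\mu},y_t^{\mu},z_t^{\mu},\mu_t)-\mathcal{H}_x^{\mu}\Delta x_t-\mathcal{H}_y^{\mu}\Delta y_t-\mathcal{H}_z^{\mu}\Delta z_t\Big]dt,
\]
the adjoint arguments $(k^{\mu},p^{\mu},P^{\mu})$ being understood, and it remains to prove that the integrand is nonnegative for a.e. $t$, a.s. To do this I would combine the two hypotheses: convexity of $(x,y,z)\mapsto\mathcal{H}$ gives the supporting–hyperplane bound $\mathcal{H}(t,x^{q},y^{q},z^{q},\mu_t)-\mathcal{H}(t,x^{\mu},y^{\mu},z^{\mu},\mu_t)\geq \mathcal{H}_x^{\mu}\Delta x+\mathcal{H}_y^{\mu}\Delta y+\mathcal{H}_z^{\mu}\Delta z$, which absorbs the three linear terms, while the minimum principle $(31)$ controls the remaining comparison in the control variable, $\mathcal{H}(t,\cdot,q_t)-\mathcal{H}(t,\cdot,\mu_t)$.

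The main obstacle is exactly this last reconciliation. The adjoint drifts force the linearization to be anchored at the optimal control $\mu_t$, whereas the convexity bound sits at the perturbed state $(x^{q},y^{q},z^{q})$ and the minimum principle $(31)$ sits along the optimal trajectory, so these two inequalities are not naturally taken at the same point. The delicate step is therefore to insert the correct intermediate Hamiltonian and to exploit the fact that, for relaxed controls, $\mathcal{H}$ is affine in the measure variable $q$, so that the directional comparison in the control is simply the value difference $\mathcal{H}(t,\cdot,q_t)-\mathcal{H}(t,\cdot,\mu_t)$ furnished by $(31)$. Once the integrand is shown to be nonnegative, integrating in $t$ and taking expectation yields $\mathcal{J}(q)\geq\mathcal{J}(\mu)$ for arbitrary $q\in\mathcal{R}$, i.e. optimality of $\mu$; the measurability and integrability needed to justify the It\^o expansions and to pass to the limit are routine under the standing assumptions $(4)$ and the square–integrability of $\xi$.
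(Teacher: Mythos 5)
Your proposal is correct and follows essentially the same route as the paper: convexity of $g$ and $h$ to linearize the boundary terms, It\^o's formula applied to $p_t^{\mu}(x_t^{q}-x_t^{\mu})$ and $k_t^{\mu}(y_t^{q}-y_t^{\mu})$ to reduce everything to the Hamiltonian inequality $(33)$, and then convexity of $\mathcal{H}$ in $(x,y,z)$ combined with the variational inequality $(31)$ and linearity in the measure variable to show the integrand is nonnegative. The one step you flag as delicate --- reconciling the subgradient of $\mathcal{H}$ anchored at $\mu_t$ with the minimum principle along the optimal trajectory --- is exactly the point the paper settles by invoking the Clarke generalized gradient and Lemmas 2.2 and 2.3 of reference $[50]$, so your outline matches the paper's proof in both structure and level of detail.
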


\begin{proof}
Let $\mu$ be an arbitrary element of $\mathcal{R}$ (candidate to be optimal).
For any $q\in\mathcal{R}$, we have%
\begin{align*}
\mathcal{J}\left(  q\right)  -\mathcal{J}\left(  \mu\right)   &
=\mathbb{E}\left[  g\left(  x_{T}^{q}\right)  -g\left(  x_{T}^{\mu}\right)
\right]  +\mathbb{E}\left[  h\left(  y_{0}^{q}\right)  -h\left(  y_{0}^{\mu
}\right)  \right] \\
&  +\mathbb{E}%
%TCIMACRO{\dint \nolimits_{0}^{T}}%
%BeginExpansion
{\displaystyle\int\nolimits_{0}^{T}}
%EndExpansion
\left[
%TCIMACRO{\dint \nolimits_{U}}%
%BeginExpansion
{\displaystyle\int\nolimits_{U}}
%EndExpansion
l\left(  t,x_{t}^{q},y_{t}^{q},z_{t}^{q},a\right)  q_{t}\left(  da\right)  -%
%TCIMACRO{\dint \nolimits_{U}}%
%BeginExpansion
{\displaystyle\int\nolimits_{U}}
%EndExpansion
l\left(  t,x_{t}^{\mu},y_{t}^{\mu},z_{t}^{\mu},a\right)  \mu_{t}\left(
da\right)  \right]  dt.
\end{align*}

Since $g$ and $h$ are convex, then%
\begin{align*}
g\left(  x_{T}^{q}\right)  -g\left(  x_{T}^{\mu}\right)   &  \geq g_{x}\left(
x_{T}^{\mu}\right)  \left(  x_{T}^{q}-x_{T}^{\mu}\right)  ,\\
h\left(  y_{0}^{q}\right)  -h\left(  y_{0}^{\mu}\right)   &  \geq h_{y}\left(
y_{0}^{\mu}\right)  \left(  y_{0}^{q}-y_{0}^{\mu}\right)  .
\end{align*}

Thus,%
\begin{align*}
\mathcal{J}\left(  q\right)  -\mathcal{J}\left(  \mu\right)   &
\geq\mathbb{E}\left[  g_{x}\left(  x_{T}^{\mu}\right)  \left(  x_{T}^{q}%
-x_{T}^{\mu}\right)  \right]  +\mathbb{E}\left[  h_{y}\left(  y_{0}^{\mu
}\right)  \left(  y_{0}^{q}-y_{0}^{\mu}\right)  \right] \\
&  +\mathbb{E}%
%TCIMACRO{\dint \nolimits_{0}^{T}}%
%BeginExpansion
{\displaystyle\int\nolimits_{0}^{T}}
%EndExpansion
\left[
%TCIMACRO{\dint \nolimits_{U}}%
%BeginExpansion
{\displaystyle\int\nolimits_{U}}
%EndExpansion
l\left(  t,x_{t}^{q},y_{t}^{q},z_{t}^{q},a\right)  q_{t}\left(  da\right)  -%
%TCIMACRO{\dint \nolimits_{U}}%
%BeginExpansion
{\displaystyle\int\nolimits_{U}}
%EndExpansion
l\left(  t,x_{t}^{\mu},y_{t}^{\mu},z_{t}^{\mu},a\right)  \mu_{t}\left(
da\right)  \right]  dt.
\end{align*}

We remark from $\left(  30\right)  $ that
\begin{align*}
p_{T}^{\mu}  &  =g_{x}\left(  x_{T}^{\mu}\right)  ,\\
k_{0}^{\mu}  &  =h_{y}\left(  y_{0}^{\mu}\right)  .
\end{align*}

Then, we have%
\begin{align*}
\mathcal{J}\left(  q\right)  -\mathcal{J}\left(  \mu\right)   &
\geq\mathbb{E}\left[  p_{T}^{\mu}\left(  x_{T}^{q}-x_{T}^{\mu}\right)
\right]  +\mathbb{E}\left[  k_{0}^{\mu}\left(  y_{0}^{q}-y_{0}^{\mu}\right)
\right] \\
&  +\mathbb{E}%
%TCIMACRO{\dint \nolimits_{0}^{T}}%
%BeginExpansion
{\displaystyle\int\nolimits_{0}^{T}}
%EndExpansion
\left[
%TCIMACRO{\dint \nolimits_{U}}%
%BeginExpansion
{\displaystyle\int\nolimits_{U}}
%EndExpansion
l\left(  t,x_{t}^{q},y_{t}^{q},z_{t}^{q},a\right)  q_{t}\left(  da\right)  -%
%TCIMACRO{\dint \nolimits_{U}}%
%BeginExpansion
{\displaystyle\int\nolimits_{U}}
%EndExpansion
l\left(  t,x_{t}^{\mu},y_{t}^{\mu},z_{t}^{\mu},a\right)  \mu_{t}\left(
da\right)  \right]  dt.
\end{align*}

By applying It\^{o}'s formula respectively to $p_{t}^{\mu}\left(  x_{t}%
^{q}-x_{t}^{\mu}\right)  $ and $k_{t}^{\mu}\left(  y_{t}^{q}-y_{t}^{\mu
}\right)  $, we obtain%
\begin{align*}
\mathbb{E}\left[  p_{T}^{\mu}\left(  x_{T}^{q}-x_{T}^{\mu}\right)  \right]
&  =-\mathbb{E}%
%TCIMACRO{\dint \nolimits_{0}^{T}}%
%BeginExpansion
{\displaystyle\int\nolimits_{0}^{T}}
%EndExpansion
\mathcal{H}_{x}\left(  t,x_{t}^{\mu},y_{t}^{\mu},z_{t}^{\mu},\mu_{t}%
,k_{t}^{\mu},p_{t}^{\mu},P_{t}^{\mu}\right)  \left(  x_{t}^{q}-x_{t}^{\mu
}\right)  dt\\
&  +\mathbb{E}%
%TCIMACRO{\dint \nolimits_{0}^{T}}%
%BeginExpansion
{\displaystyle\int\nolimits_{0}^{T}}
%EndExpansion
p_{t}^{\mu}\left[
%TCIMACRO{\dint \nolimits_{U}}%
%BeginExpansion
{\displaystyle\int\nolimits_{U}}
%EndExpansion
b\left(  t,x_{t}^{q},a\right)  q_{t}\left(  da\right)  -%
%TCIMACRO{\dint \nolimits_{U}}%
%BeginExpansion
{\displaystyle\int\nolimits_{U}}
%EndExpansion
b\left(  t,x_{t}^{\mu},a\right)  \mu_{t}\left(  da\right)  \right]  dt\\
&  +\mathbb{E}%
%TCIMACRO{\dint \nolimits_{0}^{T}}%
%BeginExpansion
{\displaystyle\int\nolimits_{0}^{T}}
%EndExpansion
P_{t}^{\mu}\left[
%TCIMACRO{\dint \nolimits_{U}}%
%BeginExpansion
{\displaystyle\int\nolimits_{U}}
%EndExpansion
\sigma\left(  t,x_{t}^{q},a\right)  q_{t}\left(  da\right)  -%
%TCIMACRO{\dint \nolimits_{U}}%
%BeginExpansion
{\displaystyle\int\nolimits_{U}}
%EndExpansion
\sigma\left(  t,x_{t}^{\mu},a\right)  \mu_{t}\left(  da\right)  \right]  dt,
\end{align*}%
\begin{align*}
\mathbb{E}\left[  k_{0}^{\mu}\left(  y_{0}^{q}-y_{0}^{\mu}\right)  \right]
&  =-\mathbb{E}%
%TCIMACRO{\dint \nolimits_{0}^{T}}%
%BeginExpansion
{\displaystyle\int\nolimits_{0}^{T}}
%EndExpansion
\mathcal{H}_{y}\left(  t,x_{t}^{\mu},y_{t}^{\mu},z_{t}^{\mu},\mu_{t}%
,k_{t}^{\mu},p_{t}^{\mu},P_{t}^{\mu}\right)  \left(  y_{t}^{q}-y_{t}^{\mu
}\right)  dt\\
&  +\mathbb{E}%
%TCIMACRO{\dint \nolimits_{0}^{T}}%
%BeginExpansion
{\displaystyle\int\nolimits_{0}^{T}}
%EndExpansion
k_{t}^{\mu}\left[
%TCIMACRO{\dint \nolimits_{U}}%
%BeginExpansion
{\displaystyle\int\nolimits_{U}}
%EndExpansion
f\left(  t,x_{t}^{q},y_{t}^{q},z_{t}^{q},a\right)  q_{t}\left(  da\right)  -%
%TCIMACRO{\dint \nolimits_{U}}%
%BeginExpansion
{\displaystyle\int\nolimits_{U}}
%EndExpansion
f\left(  t,x_{t}^{\mu},y_{t}^{\mu},z_{t}^{\mu},a\right)  \mu_{t}\left(
da\right)  \right]  dt\\
&  -\mathbb{E}%
%TCIMACRO{\dint \nolimits_{0}^{T}}%
%BeginExpansion
{\displaystyle\int\nolimits_{0}^{T}}
%EndExpansion
\mathcal{H}_{z}\left(  t,x_{t}^{\mu},y_{t}^{\mu},z_{t}^{\mu},\mu_{t}%
,k_{t}^{\mu},p_{t}^{\mu},P_{t}^{\mu}\right)  \left(  z_{t}^{q}-z_{t}^{\mu
}\right)  dt.
\end{align*}

Then,%
\begin{align}
&  \mathcal{J}\left(  q\right)  -\mathcal{J}\left(  \mu\right) \\
&  \geq\mathbb{E}%
%TCIMACRO{\dint \nolimits_{0}^{T}}%
%BeginExpansion
{\displaystyle\int\nolimits_{0}^{T}}
%EndExpansion
\left[  \mathcal{H}\left(  t,x_{t}^{q},y_{t}^{q},z_{t}^{q},q_{t},k_{t}^{\mu
},p_{t}^{\mu},P_{t}^{\mu}\right)  -\mathcal{H}\left(  t,x_{t}^{\mu},y_{t}%
^{\mu},z_{t}^{\mu},\mu_{t},k_{t}^{\mu},p_{t}^{\mu},P_{t}^{\mu}\right)
\right]  dt\nonumber\\
&  -\mathbb{E}%
%TCIMACRO{\dint \nolimits_{0}^{T}}%
%BeginExpansion
{\displaystyle\int\nolimits_{0}^{T}}
%EndExpansion
\mathcal{H}_{x}\left(  t,x_{t}^{\mu},y_{t}^{\mu},z_{t}^{\mu},\mu_{t}%
,k_{t}^{\mu},p_{t}^{\mu},P_{t}^{\mu}\right)  \left(  x_{t}^{q}-x_{t}^{\mu
}\right)  dt\nonumber\\
&  -\mathbb{E}%
%TCIMACRO{\dint \nolimits_{0}^{T}}%
%BeginExpansion
{\displaystyle\int\nolimits_{0}^{T}}
%EndExpansion
\mathcal{H}_{y}\left(  t,x_{t}^{\mu},y_{t}^{\mu},z_{t}^{\mu},\mu_{t}%
,k_{t}^{\mu},p_{t}^{\mu},P_{t}^{\mu}\right)  \left(  y_{t}^{q}-y_{t}^{\mu
}\right)  dt\nonumber\\
&  -\mathbb{E}%
%TCIMACRO{\dint \nolimits_{0}^{T}}%
%BeginExpansion
{\displaystyle\int\nolimits_{0}^{T}}
%EndExpansion
\mathcal{H}_{z}\left(  t,x_{t}^{\mu},y_{t}^{\mu},z_{t}^{\mu},\mu_{t}%
,k_{t}^{\mu},p_{t}^{\mu},P_{t}^{\mu}\right)  \left(  z_{t}^{q}-z_{t}^{\mu
}\right)  dt.\nonumber
\end{align}

Since $\mathcal{H}$ is convex in $\left(  x,y,z\right)  $ and linear in $\mu$,
then by using the Clarke generalized gradient of $\mathcal{H}$ evaluated at
$\left(  x_{t},y_{t},z_{t},\mu_{t}\right)  $ and the necessary optimality
conditions $\left(  31\right)  $, it follows by $\left[
50\text{,\ Lemmas\ 2.2 and 2.3}\right]  $ that%
\begin{align*}
&  \mathcal{H}\left(  t,x_{t}^{q},y_{t}^{q},z_{t}^{q},q_{t},k_{t}^{\mu}%
,p_{t}^{\mu},P_{t}^{\mu}\right)  -\mathcal{H}\left(  t,x_{t}^{\mu},y_{t}^{\mu
},z_{t}^{\mu},\mu_{t},k_{t}^{\mu},p_{t}^{\mu},P_{t}^{\mu}\right) \\
&  \geq\mathcal{H}_{x}\left(  t,x_{t}^{\mu},y_{t}^{\mu},z_{t}^{\mu},\mu
_{t},k_{t}^{\mu},p_{t}^{\mu},P_{t}^{\mu}\right)  \left(  x_{t}^{q}-x_{t}^{\mu
}\right)  +\mathcal{H}_{y}\left(  t,x_{t}^{\mu},y_{t}^{\mu},z_{t}^{\mu}%
,\mu_{t},k_{t}^{\mu},p_{t}^{\mu},P_{t}^{\mu}\right)  \left(  y_{t}^{q}%
-y_{t}^{\mu}\right) \\
&  +\mathcal{H}_{z}\left(  t,x_{t}^{\mu},y_{t}^{\mu},z_{t}^{\mu},\mu_{t}%
,k_{t}^{\mu},p_{t}^{\mu},P_{t}^{\mu}\right)  \left(  z_{t}^{q}-z_{t}^{\mu
}\right)  .
\end{align*}

Or equivalently,%
\begin{align*}
0  &  \leq\mathcal{H}\left(  t,x_{t}^{q},y_{t}^{q},z_{t}^{q},q_{t},k_{t}^{\mu
},p_{t}^{\mu},P_{t}^{\mu}\right)  -\mathcal{H}\left(  t,x_{t}^{\mu},y_{t}%
^{\mu},z_{t}^{\mu},\mu_{t},k_{t}^{\mu},p_{t}^{\mu},P_{t}^{\mu}\right) \\
&  -\mathcal{H}_{x}\left(  t,x_{t}^{\mu},y_{t}^{\mu},z_{t}^{\mu},\mu_{t}%
,k_{t}^{\mu},p_{t}^{\mu},P_{t}^{\mu}\right)  \left(  x_{t}^{q}-x_{t}^{\mu
}\right) \\
&  -\mathcal{H}_{y}\left(  t,x_{t}^{\mu},y_{t}^{\mu},z_{t}^{\mu},\mu_{t}%
,k_{t}^{\mu},p_{t}^{\mu},P_{t}^{\mu}\right)  \left(  y_{t}^{q}-y_{t}^{\mu
}\right) \\
&  -\mathcal{H}_{z}\left(  t,x_{t}^{\mu},y_{t}^{\mu},z_{t}^{\mu},\mu_{t}%
,k_{t}^{\mu},p_{t}^{\mu},P_{t}^{\mu}\right)  \left(  z_{t}^{q}-z_{t}^{\mu
}\right)  .
\end{align*}

Then from $\left(  33\right)  $, we get%
\[
\mathcal{J}\left(  q\right)  -\mathcal{J}\left(  \mu\right)  \geq0.
\]

The theorem is proved.
\end{proof}

\section{Optimality conditions for strict controls}

In this section, we study the strict control problem $\left\{  \left(
1\right)  ,\left(  2\right)  ,\left(  3\right)  \right\}  $ and from the
results of section 3, we derive the optimality conditions for strict controls.

\ 

Throughout this section and in addition to the assumptions $\left(  4\right)
$, we suppose that
\begin{align}
&  U\text{ is compact.}\\
&  b,\ \sigma,\ f\text{ and }l\text{ are\ bounded.}%
\end{align}

Consider the following subset of $\mathcal{R}$%
\[
\delta\left(  \mathcal{U}\right)  =\left\{  q\in\mathcal{R}\text{
\ /\ \ }q=\delta_{v}\ \ ;\ \ v\in\mathcal{U}\right\}  .
\]

The set $\delta\left(  \mathcal{U}\right)  $ is the collection of all relaxed
controls in the form of Dirac measure charging a strict control.

Denote by $\delta\left(  U\right)  $ the action set of all relaxed controls in
$\delta\left(  \mathcal{U}\right)  $.

If $q\in\delta\left(  \mathcal{U}\right)  $, then $q=\delta_{v}$ with
$v\in\mathcal{U}$. In this case we have for each $t$, $q_{t}\in\delta\left(
U\right)  $ and $q_{t}=\delta_{v_{t}}$.

\ 

We equipped $\mathbb{P}\left(  U\right)  $ with the topology of stable
convergence. Since $U$ is compact, then with this topology $\mathbb{P}\left(
U\right)  $\ is a compact metrizable space. The stable convergence is required
for bounded measurable functions $f\left(  t,a\right)  $\ such that for each
fixed $t\in\left[  0,T\right]  $, $f\left(  t,.\right)  $\ is continuous
(Instead of functions bounded and continuous with respect to the pair $\left(
t,a\right)  $ for the weak topology). The space $\mathbb{P}\left(  U\right)
$\ is equipped with its Borel $\sigma$-field, which is the smallest $\sigma
$-field such that the mapping $q\longmapsto%
%TCIMACRO{\dint }%
%BeginExpansion
{\displaystyle\int}
%EndExpansion
f\left(  s,a\right)  q\left(  ds,da\right)  $\ are measurable for any bounded
measurable function $f$, continuous with respect to $a.\ $For more details,
see Jacod and Memin $\left[  29\right]  $ and El Karoui et al $\left[
16\right]  $.

This allows us to summarize some of lemmas that we will be used in the sequel.\ 

\begin{lemma}
(Chattering\thinspace\thinspace Lemma).\thinspace\textit{Let }$q$\textit{\ be
a predictable process with values in the space of probability measures on }%
$U$\textit{. Then there exists a sequence of predictable processes }$\left(
u^{n}\right)  _{n}$\textit{\ with values in }$U$\textit{\ such that }%
\begin{equation}
dtq_{t}^{n}\left(  da\right)  =dt\delta_{u_{t}^{n}}\left(  da\right)
\underset{n\longrightarrow\infty}{\longrightarrow}dtq_{t}\left(  da\right)
\text{ stably},\text{\textit{\ \ }}\mathcal{P}-a.s.
\end{equation}
where $\delta_{u_{t}^{n}}$ is the Dirac measure concentrated at a single point
$u_{t}^{n}$ of $U$.
\end{lemma}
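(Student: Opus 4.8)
The plan is to prove the Chattering Lemma by an explicit time-discretization followed by a chattering construction, and then to verify stable convergence against a countable determining class of test functions. First I would exploit compactness: since $U$ is compact and metrizable, $\mathbb{P}(U)$ is a compact metrizable space and the space $C(U)$ of continuous functions is separable. Fix a countable dense family $\{g_j\}_{j\geq 1}$ in $C(U)$. Recalling that stable convergence of $dt\,\delta_{u_t^n}(da)$ to $dt\,q_t(da)$ means convergence of $\int_0^T\int_U \phi(t,a)\,\delta_{u_t^n}(da)\,dt$ to $\int_0^T\int_U \phi(t,a)\,q_t(da)\,dt$ for every bounded measurable $\phi$ that is continuous in $a$ for each fixed $t$, a standard monotone-class argument reduces the whole statement, for $\mathcal{P}$-a.s. fixed $\omega$, to checking that for all rationals $0\leq s<t\leq T$ and all $j$ one has
\[
\int_s^t g_j(u_r^n)\,dr \;\longrightarrow\; \int_s^t \int_U g_j(a)\,q_r(da)\,dr .
\]
This is a countable family of scalar conditions.

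Next I would construct $u^n$. For each $n$, partition $[0,T]$ into $N_n$ intervals $I_k=[t_k,t_{k+1})$ of length $T/N_n$ with $N_n\to\infty$, and on each $I_k$ replace $q$ by a single probability measure (a predictable step approximation taking an $\mathcal{F}_{t_k}$-measurable value that averages the behaviour of $q$ over the preceding interval). Covering the compact set $U$ by finitely many cells of small diameter with fixed centres $a_1^n,\dots,a_{m_n}^n$, I would assign to this measure the weights $\lambda_i^{n,k}$ equal to the mass it puts on the $i$-th cell, so that $\sum_i \lambda_i^{n,k}\,\delta_{a_i^n}$ approximates it. Finally I would chatter: subdivide $I_k$ into consecutive subintervals of lengths $\lambda_i^{n,k}\,|I_k|$ and set $u_t^n=a_i^n$ on the $i$-th subinterval. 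By construction $u^n$ is a predictable $U$-valued process, and for any $g\in C(U)$ the Riemann-type identity
\[
\int_{I_k} g(u_r^n)\,dr = |I_k|\sum_{i} \lambda_i^{n,k}\, g(a_i^n)
\]
holds exactly, which is close to $\int_{I_k}\int_U g(a)\,q_r(da)\,dr$ once both the mesh $T/N_n$ and the cell diameter tend to $0$, using the uniform continuity of $g$ on the compact set $U$.

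Summing over $k$ and letting $n\to\infty$ then yields, for each fixed pair $(g_j,[s,t])$, convergence of $\int_s^t g_j(u_r^n)\,dr$ to $\int_s^t\int_U g_j(a)\,q_r(da)\,dr$ for $\mathcal{P}$-almost every $\omega$. Because the index set of pairs $(g_j,s,t)$ is countable, I would intersect these full-measure sets to obtain a single $\mathcal{P}$-null set off which all the scalar convergences hold simultaneously; by the reduction of the first paragraph this is exactly the asserted stable convergence, $\mathcal{P}$-a.s.

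The hard part will be the measurability and predictability bookkeeping rather than the analysis. Performing the cell-weight assignment $\omega\mapsto(\lambda_i^{n,k}(\omega))_i$ and then laying down the chattering subintervals must be done jointly measurably in $(\omega,t)$ so that the resulting $u^n$ is genuinely predictable; this is where a measurable-selection argument is needed, keeping the centres $a_i^n$ deterministic and letting only the weights carry the $\omega$-dependence. A secondary subtlety is coordinating the two limits, the mesh and the cell diameter, and extracting a single sequence: a diagonal choice of $N_n\to\infty$ with simultaneously shrinking cells, for which the countable determining class legitimizes the passage to the $\mathcal{P}$-a.s. statement.
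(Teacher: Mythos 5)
The paper gives no proof of this lemma at all: its entire ``proof'' is the citation to El Karoui et al.\ [16]. Your proposal therefore does strictly more than the paper, and what you describe is in substance the classical chattering construction from that very reference (predictable time-discretization of $q$, spatial discretization of the compact set $U$ into cells with deterministic centres, occupation of each time block by the centres for durations proportional to the cell masses), so the route is right and I see no fatal gap. Three points should be made honest rather than waved at. First, the reduction to the countable class $1_{[s,t]}(r)g_j(a)$ is not a bare monotone-class argument: passing from a generating algebra of time sets to all Borel sets uses that the signed measures $B\mapsto\int_B\bigl(g(u_r^n)-\int_U g\,dq_r\bigr)dr$ have $dt$-densities bounded uniformly in $n$, and passing from product test functions to general bounded $\phi(t,a)$ continuous in $a$ is a genuine theorem of Jacod--M\'emin that uses compactness of $U$; both steps are standard but need a citation. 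Second, your block identity reproduces the $q$-average over the \emph{preceding} interval (the price of predictability), so matching $\int_s^t\int_U g\,dq_r\,dr$ involves a one-mesh shift plus two boundary blocks, an error of order $\left\Vert g\right\Vert_\infty/N_n$ handled by $L^1(dt)$ convergence of Steklov-type averages for each fixed $\omega$. Third, predictability needs no measurable selection: the subinterval endpoints are $t_k$ plus $\mathcal{F}_{t_k}$-measurable increments, hence stopping times, and the resulting stochastic intervals are predictable since the centres are deterministic.
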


\begin{proof}
See El Karoui et al $\left[  16\right]  $.
\end{proof}

\begin{lemma}
Let $q$ be a relaxed control and $\left(  u^{n}\right)  _{n}$ be a sequence of
strict controls such that $\left(  36\right)  $ holds. Then for any bounded
measurable function $f:\left[  0,T\right]  \times U\rightarrow\mathbb{R}$,
such that for each fixed $t\in\left[  0,T\right]  $, $f\left(  t,.\right)  $
is continuous, we have%
\begin{equation}%
%TCIMACRO{\dint \nolimits_{U}}%
%BeginExpansion
{\displaystyle\int\nolimits_{U}}
%EndExpansion
f\left(  t,a\right)  \delta_{u_{t}^{n}}\left(  da\right)  \underset
{n\longrightarrow\infty}{\longrightarrow}%
%TCIMACRO{\dint \nolimits_{U}}%
%BeginExpansion
{\displaystyle\int\nolimits_{U}}
%EndExpansion
f\left(  t,a\right)  q_{t}\left(  da\right)  \ ;\ dt-a.e
\end{equation}

\end{lemma}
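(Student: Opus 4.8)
The statement to prove is Lemma (the one immediately following the Chattering Lemma): given a relaxed control $q$ and a sequence of strict controls $(u^n)_n$ with $dt\,\delta_{u_t^n}(da)\to dt\,q_t(da)$ stably $\mathcal{P}$-a.s., I want to show that for any bounded measurable $f:[0,T]\times U\to\mathbb{R}$ with $f(t,\cdot)$ continuous for each fixed $t$, the time-section convergence

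\[
\int_U f(t,a)\,\delta_{u_t^n}(da)\longrightarrow\int_U f(t,a)\,q_t(da),\quad dt\text{-a.e.}
\]

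holds. The plan is to derive this $dt$-a.e. pointwise-in-$t$ statement directly from the stable convergence of the product measures $dt\,q_t^n(da)$ to $dt\,q_t(da)$ on $[0,T]\times U$, which is exactly what hypothesis $(36)$ provides.

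\textbf{Main steps.} First I would recall the definition of stable convergence: $dt\,q_t^n(da)\to dt\,q_t(da)$ stably means that for every bounded measurable $g:[0,T]\times U\to\mathbb{R}$ that is continuous in the $U$-variable for each fixed $t$, one has $\int_0^T\!\int_U g(t,a)\,q_t^n(da)\,dt\to\int_0^T\!\int_U g(t,a)\,q_t(da)\,dt$. Applying this with $g=f$ itself already gives convergence of the \emph{time-integrated} quantities. The real content is to upgrade integrated convergence to $dt$-a.e. convergence of the integrand $\varphi_n(t):=\int_U f(t,a)\,\delta_{u_t^n}(da)=f(t,u_t^n)$. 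To do this I would test against functions of the separated form $g(t,a)=\psi(t)f(t,a)$ for arbitrary $\psi\in L^\infty([0,T])$ (or $\psi\in C([0,T])$); each such $g$ is again bounded and continuous in $a$ for fixed $t$, so stable convergence yields $\int_0^T\psi(t)\,\varphi_n(t)\,dt\to\int_0^T\psi(t)\,\varphi(t)\,dt$, where $\varphi(t)=\int_U f(t,a)\,q_t(da)$. Since $f$ is bounded, the sequence $(\varphi_n)$ is uniformly bounded in $L^\infty([0,T])\subset L^2([0,T])$, hence weakly sequentially compact in $L^2$; the testing just performed identifies $\varphi$ as the unique weak-$L^2$ limit of $\varphi_n$. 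To turn \emph{weak} convergence into \emph{a.e.} convergence, I would pass to the appropriate subsequence and invoke a Komlós-type or Cesàro argument, or more simply argue that the stable limit $q_t$ disintegrates $dt$-a.e.\ and that the measures $\delta_{u_t^n}$ converge narrowly on $U$ for $dt$-a.e.\ $t$ along a subsequence, so that $\int_U f(t,\cdot)\,\delta_{u_t^n}\to\int_U f(t,\cdot)\,q_t$ by the portmanteau theorem applied section-wise.

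\textbf{The main obstacle.} The delicate point is precisely this last upgrade from integrated (or weak) convergence to $dt$-almost-everywhere pointwise convergence of the sections. Stable convergence of $dt\,q_t^n(da)$ is a statement about the product measure on $[0,T]\times U$, and extracting a genuine $dt$-a.e. statement about the disintegrated sections $q_t^n$ requires a measurable-disintegration argument: one must realize the stable limit as a measurable family $(q_t)_t$ and show that, for $dt$-almost every $t$, the section measures converge narrowly. This is standard in the relaxed-control literature (it is essentially the content of the fact that stable convergence metrizes on a Polish space of Young measures, using that $U$ is compact metrizable so $\mathbb{P}(U)$ is compact metrizable), but it is the step where the hypotheses $f(t,\cdot)$ continuous and $U$ compact are genuinely used, and where one may need to pass to a subsequence. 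Everything else—boundedness giving uniform integrability, the testing against separated functions, and the final portmanteau identification—is routine once this disintegration is in place.
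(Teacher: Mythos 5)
Your proposal follows essentially the same route as the paper's own proof up to the point where both arguments face the same difficulty. The paper tests the stable convergence $(36)$ against $g(s,a)=1_{[0,t]}(s)f(s,a)$, deduces $\int_0^t\int_U f(s,a)\,\delta_{u_s^n}(da)\,ds\to\int_0^t\int_U f(s,a)\,q_s(da)\,ds$ for every $t$, extends this to all Borel sets $B\subset[0,T]$, and then asserts that this ``implies'' the $dt$-a.e.\ convergence $(37)$ of the integrands. You test against $\psi(t)f(t,a)$ and arrive at the equivalent statement: weak convergence in $L^2(dt)$ (equivalently weak-$*$ in $L^\infty(dt)$) of $\varphi_n(t)=f(t,u_t^n)$ to $\varphi(t)=\int_U f(t,a)\,q_t(da)$. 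Up to here the two arguments are identical in substance.

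The genuine gap is exactly the step you flag as the ``main obstacle,'' and none of the devices you list closes it. Weak (or setwise-integrated) convergence of a bounded sequence does not upgrade to $dt$-a.e.\ convergence: Koml\'os yields a.e.\ convergence only of Ces\`aro means of a subsequence; passing to a subsequence changes the statement being proved; and the section measures $\delta_{u_t^n}$ need not converge narrowly for a.e.\ $t$, since stable convergence of the product measures $dt\,\delta_{u_t^n}(da)$ carries no sectionwise information. Indeed the conclusion $(37)$ as literally stated fails for the chattering sequences produced by the preceding lemma: if $q_t\equiv\frac12\left(\delta_{a_0}+\delta_{a_1}\right)$ and $u^n$ oscillates rapidly between $a_0$ and $a_1$, then for $f$ with $f(t,a_0)\neq f(t,a_1)$ the left-hand side of $(37)$ takes only the two values $f(t,a_0)$ and $f(t,a_1)$ and cannot converge to their average for a.e.\ $t$ (this is the $L^\infty$ weak-$*$ analogue of $\sin(nt)\rightharpoonup 0$ without a.e.\ convergence). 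What your argument, and the paper's, actually establishes is weak-$*$ convergence in $L^\infty([0,T],dt)$ of $t\mapsto\int_U f(t,a)\,\delta_{u_t^n}(da)$, which is all that is used in the later stability lemma (the quantity always appears under a further $dt$-integral). So your attempt reproduces the paper's argument including its defect; the difference is that you correctly identify where the unproved step sits, whereas the paper passes over it silently.
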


\begin{proof}
By $\left(  36\right)  $ and the definition of the stable convergence (see
Jacod-Memin $\left[  29,\ \text{definition 1.1, page 529}\right]  $, we have%
\[%
%TCIMACRO{\dint \nolimits_{0}^{T}}%
%BeginExpansion
{\displaystyle\int\nolimits_{0}^{T}}
%EndExpansion%
%TCIMACRO{\dint \nolimits_{U}}%
%BeginExpansion
{\displaystyle\int\nolimits_{U}}
%EndExpansion
f\left(  t,a\right)  \delta_{u_{t}^{n}}\left(  da\right)  dt\underset
{n\longrightarrow\infty}{\longrightarrow}%
%TCIMACRO{\dint \nolimits_{0}^{T}}%
%BeginExpansion
{\displaystyle\int\nolimits_{0}^{T}}
%EndExpansion%
%TCIMACRO{\dint \nolimits_{U}}%
%BeginExpansion
{\displaystyle\int\nolimits_{U}}
%EndExpansion
f\left(  t,a\right)  q_{t}\left(  da\right)  dt.
\]

Put%
\[
g\left(  s,a\right)  =1_{\left[  0,t\right]  }\left(  s\right)  f\left(
s,a\right)  .
\]

It's clear that $g$ is bounded, measurable and continuous with respect to $a$.
Then%
\[%
%TCIMACRO{\dint \nolimits_{0}^{T}}%
%BeginExpansion
{\displaystyle\int\nolimits_{0}^{T}}
%EndExpansion%
%TCIMACRO{\dint \nolimits_{U}}%
%BeginExpansion
{\displaystyle\int\nolimits_{U}}
%EndExpansion
g\left(  s,a\right)  \delta_{u_{s}^{n}}\left(  da\right)  ds\underset
{n\longrightarrow\infty}{\longrightarrow}%
%TCIMACRO{\dint \nolimits_{0}^{T}}%
%BeginExpansion
{\displaystyle\int\nolimits_{0}^{T}}
%EndExpansion%
%TCIMACRO{\dint \nolimits_{U}}%
%BeginExpansion
{\displaystyle\int\nolimits_{U}}
%EndExpansion
g\left(  s,a\right)  q_{s}\left(  da\right)  ds.
\]

By replacing $g\left(  s,a\right)  $ by its value, we have%
\[%
%TCIMACRO{\dint \nolimits_{0}^{t}}%
%BeginExpansion
{\displaystyle\int\nolimits_{0}^{t}}
%EndExpansion%
%TCIMACRO{\dint \nolimits_{U}}%
%BeginExpansion
{\displaystyle\int\nolimits_{U}}
%EndExpansion
f\left(  s,a\right)  \delta_{u_{s}^{n}}\left(  da\right)  ds\underset
{n\longrightarrow\infty}{\longrightarrow}%
%TCIMACRO{\dint \nolimits_{0}^{t}}%
%BeginExpansion
{\displaystyle\int\nolimits_{0}^{t}}
%EndExpansion%
%TCIMACRO{\dint \nolimits_{U}}%
%BeginExpansion
{\displaystyle\int\nolimits_{U}}
%EndExpansion
f\left(  s,a\right)  q_{s}\left(  da\right)  ds.
\]

The set $\left\{  \left(  s,t\right)  \ ;\ 0\leq s\leq t\leq T\right\}  $
generate $\mathcal{B}_{\left[  0,T\right]  }$. Then, for every $B\in
\mathcal{B}_{\left[  0,T\right]  }$ we have%
\[%
%TCIMACRO{\dint \nolimits_{B}}%
%BeginExpansion
{\displaystyle\int\nolimits_{B}}
%EndExpansion%
%TCIMACRO{\dint \nolimits_{U}}%
%BeginExpansion
{\displaystyle\int\nolimits_{U}}
%EndExpansion
f\left(  s,a\right)  \delta_{u_{s}^{n}}\left(  da\right)  ds\underset
{n\longrightarrow\infty}{\longrightarrow}%
%TCIMACRO{\dint \nolimits_{B}}%
%BeginExpansion
{\displaystyle\int\nolimits_{B}}
%EndExpansion%
%TCIMACRO{\dint \nolimits_{U}}%
%BeginExpansion
{\displaystyle\int\nolimits_{U}}
%EndExpansion
f\left(  s,a\right)  q_{s}\left(  da\right)  ds.
\]

This implies that%
\[%
%TCIMACRO{\dint \nolimits_{U}}%
%BeginExpansion
{\displaystyle\int\nolimits_{U}}
%EndExpansion
f\left(  s,a\right)  \delta_{u_{s}^{n}}\left(  da\right)  \underset
{n\longrightarrow\infty}{\longrightarrow}%
%TCIMACRO{\dint \nolimits_{U}}%
%BeginExpansion
{\displaystyle\int\nolimits_{U}}
%EndExpansion
f\left(  s,a\right)  q_{s}\left(  da\right)  \ ,\ \ dt-a.e.
\]

The lemma is proved.
\end{proof}

\ 

The next lemma gives the stability of the controlled FBSDE with respect to the
control variable.

\begin{lemma}
Let $q\in\mathcal{R}$ be a relaxed control and $\left(  x^{q},y^{q}%
,z^{q}\right)  $\textit{\ the corresponding trajectory. Then there exists a
sequence }$\left(  u^{n}\right)  _{n}\subset\mathcal{U}$ such that
\begin{align}
\underset{n\rightarrow\infty}{\lim}\mathbb{E}\left[  \underset{t\in\left[
0,T\right]  }{\sup}\left\vert x_{t}^{n}-x_{t}^{q}\right\vert ^{2}\right]   &
=0,\\
\underset{n\rightarrow\infty}{\lim}\mathbb{E}\left[  \underset{t\in\left[
0,T\right]  }{\sup}\left\vert y_{t}^{n}-y_{t}^{q}\right\vert ^{2}\right]   &
=0,\\
\underset{n\rightarrow\infty}{\lim}%
%TCIMACRO{\dint \nolimits_{0}^{T}}%
%BeginExpansion
{\displaystyle\int\nolimits_{0}^{T}}
%EndExpansion
\mathbb{E}\left\vert z_{t}^{n}-z_{t}^{q}\right\vert ^{2}dt  &  =0,
\end{align}%
\begin{equation}
\underset{n\rightarrow\infty}{\lim}J\left(  u^{n}\right)  =\mathcal{J}\left(
q\right)  .
\end{equation}
where $\left(  x^{n},y^{n},z^{n}\right)  $ denotes the solution of equation
$\left(  1\right)  $ associated with $u^{n}.$
\end{lemma}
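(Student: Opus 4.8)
The plan is to build the approximating strict controls from the Chattering Lemma and then to transfer the stable convergence of the controls into strong convergence of the trajectories, exploiting the fact that the forward equation for $x$ does not involve $(y,z)$, so that $(38)$ can be obtained first and then fed into the backward estimate. Accordingly, I first apply the Chattering Lemma to the given relaxed control $q$, producing a sequence $\left( u^{n}\right) _{n}\subset\mathcal{U}$ such that $(36)$ holds, and I let $\left( x^{n},y^{n},z^{n}\right) $ denote the solution of $(1)$ associated with $u^{n}$, equivalently the solution of the relaxed system $(5)$ driven by $\delta_{u^{n}}$.

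For the forward convergence $(38)$, I would write the equation satisfied by $x_{t}^{n}-x_{t}^{q}$ and split each coefficient increment into a state part and a measure part, e.g.
\[
\overline{b}\left( s,x_{s}^{n},\delta_{u_{s}^{n}}\right) -\overline{b}\left( s,x_{s}^{q},q_{s}\right) =\left[ \overline{b}\left( s,x_{s}^{n},\delta_{u_{s}^{n}}\right) -\overline{b}\left( s,x_{s}^{q},\delta_{u_{s}^{n}}\right) \right] +\left[ \overline{b}\left( s,x_{s}^{q},\delta_{u_{s}^{n}}\right) -\overline{b}\left( s,x_{s}^{q},q_{s}\right) \right] ,
\]
and similarly for $\overline{\sigma}$. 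The first bracket is bounded by $C\left\vert x_{s}^{n}-x_{s}^{q}\right\vert $ thanks to the uniform Lipschitz property in $(4)$. The second bracket equals $\int_{U}b\left( s,x_{s}^{q},a\right) \left( \delta_{u_{s}^{n}}-q_{s}\right) \left( da\right) $; for $\mathcal{P}$-a.e. $\omega$ the map $a\mapsto b\left( s,x_{s}^{q},a\right) $ is bounded and continuous, so $(37)$ gives convergence to $0$, $dt$-a.e., and the boundedness hypothesis $(35)$ lets me pass to the limit in $L^{2}\left( \Omega\times\left[ 0,T\right] \right) $ by dominated convergence. A Burkholder-Davis-Gundy estimate followed by Gronwall's lemma then yields $(38)$; this is exactly the forward SDE situation already treated in Bahlali $\left[ 7\right] $.

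For the backward convergence, I would apply It\^{o}'s formula to $\left\vert y_{t}^{n}-y_{t}^{q}\right\vert ^{2}$ and run the same computation as in the estimates leading to $(9)$ and $(10)$. The terminal term is controlled by $\mathbb{E}\left\vert \varphi\left( x_{T}^{n}\right) -\varphi\left( x_{T}^{q}\right) \right\vert ^{2}\leq C\,\mathbb{E}\left\vert x_{T}^{n}-x_{T}^{q}\right\vert ^{2}$, which tends to $0$ by $(38)$. The driver increment splits into an $(x,y,z)$-part, handled by the uniform Lipschitz constants of $f$, and a frozen measure part $\int_{U}f\left( s,x_{s}^{q},y_{s}^{q},z_{s}^{q},a\right) \left( \delta_{u_{s}^{n}}-q_{s}\right) \left( da\right) $, which again vanishes in $L^{2}$ by $(37)$, $(35)$ and dominated convergence. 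Young's inequality with $\varepsilon=1/\left( 2C\right) $ absorbs the $z$-term, and Gronwall's lemma together with Burkholder-Davis-Gundy yield $(39)$ and then $(40)$.

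Finally, for $(41)$ I would decompose
\[
J\left( u^{n}\right) -\mathcal{J}\left( q\right) =\mathbb{E}\left[ g\left( x_{T}^{n}\right) -g\left( x_{T}^{q}\right) \right] +\mathbb{E}\left[ h\left( y_{0}^{n}\right) -h\left( y_{0}^{q}\right) \right] +R_{n},
\]
where $R_{n}$ is the running-cost difference. The first two terms tend to $0$ from $(38)$, $(39)$ and the Lipschitz continuity of $g$ and $h$; for $R_{n}$ I split once more into a trajectory error, controlled by the Lipschitz continuity of $l$ together with $(38)$, $(39)$, $(40)$, and a frozen measure error $\int_{U}l\left( t,x_{t}^{q},y_{t}^{q},z_{t}^{q},a\right) \left( \delta_{u_{t}^{n}}-q_{t}\right) \left( da\right) $, which vanishes by $(37)$, $(35)$ and dominated convergence. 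The principal obstacle throughout is precisely these frozen measure increments: the integrands $b,\sigma,f,l$ are evaluated along the $\omega$-dependent limit trajectory, whereas $(37)$ is only a $dt$-a.e. statement for each fixed $\omega$; the compactness of $U$ in $(34)$ and the uniform boundedness $(35)$ are exactly what is needed to upgrade this to convergence in $L^{2}\left( \Omega\times\left[ 0,T\right] \right) $ via dominated convergence. The decoupled structure, the forward equation being autonomous, removes any genuine forward-backward coupling difficulty.
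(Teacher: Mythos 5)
Your proposal is correct and follows essentially the same route as the paper: apply the Chattering Lemma, split each coefficient increment into a Lipschitz trajectory part and a frozen-measure part handled by $(37)$, boundedness and dominated convergence, then Gronwall and Burkholder--Davis--Gundy for the forward equation, and the same Cauchy--Schwarz decomposition for $(41)$. The only cosmetic difference is in $(39)$--$(40)$, where the paper invokes the a priori estimates of Briand et al.\ $\left[ 12\right]$ for the linear BSDE satisfied by $\left( y^{n}-y^{q},z^{n}-z^{q}\right)$, whereas you redo the It\^{o}--Young--Gronwall computation by hand as in the proof of $(9)$ and $(10)$; the two arguments are interchangeable.
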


\begin{proof}
Proof of\textbf{ }$\left(  38\right)  $. We have%
\begin{align*}
\mathbb{E}\left\vert x_{t}^{n}-x_{t}^{q}\right\vert ^{2}  &  \leq C%
%TCIMACRO{\dint \nolimits_{0}^{t}}%
%BeginExpansion
{\displaystyle\int\nolimits_{0}^{t}}
%EndExpansion
\mathbb{E}\left\vert b\left(  s,x_{s}^{n},u_{s}^{n}\right)  -\int_{U}b\left(
s,x_{s}^{q},a\right)  q_{s}\left(  da\right)  \right\vert ^{2}ds\\
&  +C%
%TCIMACRO{\dint \nolimits_{0}^{t}}%
%BeginExpansion
{\displaystyle\int\nolimits_{0}^{t}}
%EndExpansion
\mathbb{E}\left\vert \sigma\left(  s,x_{s}^{n},u_{s}^{n}\right)  -\int
_{U}\sigma\left(  s,x_{s}^{q},a\right)  q_{s}\left(  da\right)  \right\vert
^{2}ds\\
&  \leq C%
%TCIMACRO{\dint \nolimits_{0}^{t}}%
%BeginExpansion
{\displaystyle\int\nolimits_{0}^{t}}
%EndExpansion
\mathbb{E}\left\vert b\left(  s,x_{s}^{n},u_{s}^{n}\right)  -b\left(
s,x_{s}^{q},u_{s}^{n}\right)  \right\vert ^{2}ds\\
&  +C%
%TCIMACRO{\dint \nolimits_{0}^{t}}%
%BeginExpansion
{\displaystyle\int\nolimits_{0}^{t}}
%EndExpansion
\mathbb{E}\left\vert b\left(  s,x_{s}^{q},u_{s}^{n}\right)  -\int_{U}b\left(
s,x_{s}^{q},a\right)  q_{s}\left(  da\right)  \right\vert ^{2}ds\\
&  +C%
%TCIMACRO{\dint \nolimits_{0}^{t}}%
%BeginExpansion
{\displaystyle\int\nolimits_{0}^{t}}
%EndExpansion
\mathbb{E}\left\vert \sigma\left(  s,x_{s}^{n},u_{s}^{n}\right)
-\sigma\left(  s,x_{s}^{q},u_{s}^{n}\right)  \right\vert ^{2}ds\\
&  +C%
%TCIMACRO{\dint \nolimits_{0}^{t}}%
%BeginExpansion
{\displaystyle\int\nolimits_{0}^{t}}
%EndExpansion
\mathbb{E}\left\vert \sigma\left(  s,x_{s}^{q},u_{s}^{n}\right)  -\int
_{U}\sigma\left(  s,x_{s}^{q},a\right)  q_{s}\left(  da\right)  \right\vert
^{2}ds
\end{align*}

Since $b$ and $\sigma$ are uniformly Lipschitz with respect to $x$, then%
\begin{align*}
\mathbb{E}\left\vert x_{t}^{n}-x_{t}^{q}\right\vert ^{2}  &  \leq C%
%TCIMACRO{\dint \nolimits_{0}^{t}}%
%BeginExpansion
{\displaystyle\int\nolimits_{0}^{t}}
%EndExpansion
\mathbb{E}\left\vert x_{s}^{n}-x_{s}^{q}\right\vert ^{2}ds\\
&  +C%
%TCIMACRO{\dint \nolimits_{0}^{t}}%
%BeginExpansion
{\displaystyle\int\nolimits_{0}^{t}}
%EndExpansion
\mathbb{E}\left\vert b\left(  s,x_{s}^{q},u_{s}^{n}\right)  -\int_{U}b\left(
s,x_{s}^{q},a\right)  q_{s}\left(  da\right)  \right\vert ^{2}ds\\
&  +C%
%TCIMACRO{\dint \nolimits_{0}^{t}}%
%BeginExpansion
{\displaystyle\int\nolimits_{0}^{t}}
%EndExpansion
\mathbb{E}\left\vert \int_{U}\sigma\left(  s,x_{s}^{q},a\right)  \delta
_{u_{s}^{n}}\left(  da\right)  -\int_{U}\sigma\left(  s,x_{s}^{q},a\right)
q_{s}\left(  da\right)  \right\vert ^{2}ds
\end{align*}

Since $b$ and $\sigma$ are bounded, measurable and continuous with respect to
$a$, then by $\left(  37\right)  $ and the dominated convergence theorem, the
second and third terms in the right hand side of the above inequality tend to
zero as $n$ tends to infinity. We conclude then by using Gronwall's lemma and
Bukholder-Davis-Gundy inequality.

\ 

ii) Proof of $\left(  39\right)  $\textbf{\ }and\textbf{\ }$\left(  40\right)
$\textbf{.}

We have%
\[
\left\{
\begin{array}
[c]{ll}%
d\left(  y_{t}^{n}-y_{t}^{q}\right)  = & -\left[  f\left(  t,x_{t}^{n}%
,y_{t}^{n},z_{t}^{n},u_{t}^{n}\right)  -f\left(  t,x_{t}^{n},y_{t}^{q}%
,z_{t}^{q},u_{t}^{n}\right)  \right]  dt\\
& -\left[  f\left(  t,x_{t}^{n},y_{t}^{q},z_{t}^{q},u_{t}^{n}\right)
-f\left(  t,x_{t}^{q},y_{t}^{q},z_{t}^{q},u_{t}^{n}\right)  \right]  dt\\
& -\left[  f\left(  t,x_{t}^{q},y_{t}^{q},z_{t}^{q},u_{t}^{n}\right)  -%
%TCIMACRO{\dint \nolimits_{U}}%
%BeginExpansion
{\displaystyle\int\nolimits_{U}}
%EndExpansion
f\left(  t,x_{t}^{q},y_{t}^{q},z_{t}^{q},a\right)  q_{t}\left(  da\right)
\right]  dt\\
& +\left(  z_{t}^{n}-z_{t}^{q}\right)  dW_{t},\\
y_{T}^{n}-y_{T}^{q}= & \varphi\left(  x_{T}^{n}\right)  -\varphi\left(
x_{T}^{q}\right)  .
\end{array}
\right.
\]

Put%
\begin{align*}
Y_{t}^{n}  &  =y_{t}^{n}-y_{t}^{q},\\
Z_{t}^{n}  &  =z_{t}^{n}-z_{t}^{q},
\end{align*}
and%
\begin{align}
\Psi^{n}\left(  t,Y_{t}^{n},Z_{t}^{n}\right)   &  =-\left[  f\left(
t,x_{t}^{n},y_{t}^{q},z_{t}^{q},u_{t}^{n}\right)  -f\left(  t,x_{t}^{q}%
,y_{t}^{q},z_{t}^{q},u_{t}^{n}\right)  \right]  dt\\
&  -f\left(  t,x_{t}^{q},y_{t}^{q},z_{t}^{q},u_{t}^{n}\right)  -%
%TCIMACRO{\dint \nolimits_{U}}%
%BeginExpansion
{\displaystyle\int\nolimits_{U}}
%EndExpansion
f\left(  t,x_{t}^{q},y_{t}^{q},z_{t}^{q},a\right)  q_{t}\left(  da\right)
\nonumber\\
&  -%
%TCIMACRO{\dint \nolimits_{0}^{1}}%
%BeginExpansion
{\displaystyle\int\nolimits_{0}^{1}}
%EndExpansion
f_{y}\left(  t,x_{t}^{q},y_{t}^{q}+\lambda\left(  y_{t}^{n}-y_{t}^{q}\right)
,z_{t}^{q}+\lambda\left(  z_{t}^{n}-z_{t}^{q}\right)  ,u_{t}^{n}\right)
Y_{t}^{n}d\lambda\nonumber\\
&  -%
%TCIMACRO{\dint \nolimits_{0}^{1}}%
%BeginExpansion
{\displaystyle\int\nolimits_{0}^{1}}
%EndExpansion
f_{z}\left(  t,x_{t}^{q},y_{t}^{q}+\lambda\left(  y_{t}^{n}-y_{t}^{q}\right)
,z_{t}^{q}+\lambda\left(  z_{t}^{n}-z_{t}^{q}\right)  ,u_{t}^{n}\right)
Z_{t}^{n}d\lambda.\nonumber
\end{align}

Then%
\begin{equation}
\left\{
\begin{array}
[c]{l}%
dY_{t}^{n}=\Psi^{n}\left(  t,Y_{t}^{n},Z_{t}^{n}\right)  dt+Z_{t}^{n}dW_{t},\\
Y_{T}^{n}=\varphi\left(  x_{T}^{n}\right)  -\varphi\left(  x_{T}^{q}\right)  .
\end{array}
\right.
\end{equation}

The above equation is a linear BSDE with bounded coefficients, then by
applying a priori estimates (see Briand et al $\left[  12\right]  $), we get%
\begin{align*}
\mathbb{E}\left[  \underset{t\in\left[  0,T\right]  }{\sup}\left\vert
Y_{t}^{n}\right\vert ^{2}+%
%TCIMACRO{\dint \nolimits_{0}^{T}}%
%BeginExpansion
{\displaystyle\int\nolimits_{0}^{T}}
%EndExpansion
\left\vert Z_{t}^{n}\right\vert ^{2}dt\right]   &  \leq C\mathbb{E}\left[
\left\vert \varphi\left(  x_{T}^{n}\right)  -\varphi\left(  x_{T}^{q}\right)
\right\vert ^{2}+\left\vert
%TCIMACRO{\dint \nolimits_{0}^{T}}%
%BeginExpansion
{\displaystyle\int\nolimits_{0}^{T}}
%EndExpansion
\left\vert \Psi^{n}\left(  t,0,0\right)  \right\vert dt\right\vert ^{2}\right]
\\
&  \leq C\mathbb{E}\left[  \left\vert \varphi\left(  x_{T}^{n}\right)
-\varphi\left(  x_{T}^{q}\right)  \right\vert ^{2}+%
%TCIMACRO{\dint \nolimits_{0}^{T}}%
%BeginExpansion
{\displaystyle\int\nolimits_{0}^{T}}
%EndExpansion
\left\vert \Psi^{n}\left(  t,0,0\right)  \right\vert ^{2}dt\right]  .
\end{align*}

From $\left(  42\right)  $, we get%
\begin{align*}
\mathbb{E}\left[  \underset{t\in\left[  0,T\right]  }{\sup}\left\vert
Y_{t}^{n}\right\vert ^{2}+%
%TCIMACRO{\dint \nolimits_{0}^{T}}%
%BeginExpansion
{\displaystyle\int\nolimits_{0}^{T}}
%EndExpansion
\left\vert Z_{t}^{n}\right\vert ^{2}dt\right]   &  \leq C\mathbb{E}\left\vert
\varphi\left(  x_{T}^{n}\right)  -\varphi\left(  x_{T}^{q}\right)  \right\vert
^{2}\\
&  +C\mathbb{E}%
%TCIMACRO{\dint \nolimits_{0}^{T}}%
%BeginExpansion
{\displaystyle\int\nolimits_{0}^{T}}
%EndExpansion
\left\vert f\left(  t,x_{t}^{n},y_{t}^{q},z_{t}^{q},u_{t}^{n}\right)
-f\left(  t,x_{t}^{q},y_{t}^{q},z_{t}^{q},u_{t}^{n}\right)  \right\vert
^{2}dt\\
&  +C\mathbb{E}%
%TCIMACRO{\dint \nolimits_{0}^{T}}%
%BeginExpansion
{\displaystyle\int\nolimits_{0}^{T}}
%EndExpansion
\left\vert f\left(  t,x_{t}^{q},y_{t}^{q},z_{t}^{q},u_{t}^{n}\right)  -%
%TCIMACRO{\dint \nolimits_{U}}%
%BeginExpansion
{\displaystyle\int\nolimits_{U}}
%EndExpansion
f\left(  t,x_{t}^{q},y_{t}^{q},z_{t}^{q},a\right)  q_{t}\left(  da\right)
\right\vert ^{2}dt.
\end{align*}

By $\left(  4\right)  $, $\varphi$ and $f$ are uniformly Lipshitz with respect
to $x$, then we get%
\begin{align}
\mathbb{E}\left[  \underset{t\in\left[  0,T\right]  }{\sup}\left\vert
Y_{t}^{n}\right\vert ^{2}+%
%TCIMACRO{\dint \nolimits_{0}^{T}}%
%BeginExpansion
{\displaystyle\int\nolimits_{0}^{T}}
%EndExpansion
\left\vert Z_{t}^{n}\right\vert ^{2}dt\right]   &  \leq C\mathbb{E}\left\vert
x_{T}^{n}-x_{T}^{q}\right\vert ^{2}+C\mathbb{E}%
%TCIMACRO{\dint \nolimits_{0}^{T}}%
%BeginExpansion
{\displaystyle\int\nolimits_{0}^{T}}
%EndExpansion
\left\vert x_{t}^{n}-x_{t}^{q}\right\vert ^{2}dt\nonumber\\
&  +C\mathbb{E}%
%TCIMACRO{\dint \nolimits_{0}^{T}}%
%BeginExpansion
{\displaystyle\int\nolimits_{0}^{T}}
%EndExpansion
\left\vert
%TCIMACRO{\dint \nolimits_{U}}%
%BeginExpansion
{\displaystyle\int\nolimits_{U}}
%EndExpansion
f\left(  t,x_{t}^{q},y_{t}^{q},z_{t}^{q},a\right)  \delta_{u_{t}^{n}}\left(
da\right)  -%
%TCIMACRO{\dint \nolimits_{U}}%
%BeginExpansion
{\displaystyle\int\nolimits_{U}}
%EndExpansion
f\left(  t,x_{t}^{q},y_{t}^{q},z_{t}^{q},a\right)  q_{t}\left(  da\right)
\right\vert ^{2}dt.\nonumber
\end{align}

By $\left(  38\right)  $, the first and second terms in the right hand side of
the above inequality tends to zero as $n$ tends to infinity. Moreover, since
$f$ is bounded, measurable and continuous with respect to $a$, then by
$\left(  37\right)  $ and the dominated convergence theorem, the third term in
the right hand side tends to zero as $n$ tends to infinity. This prove
$\left(  39\right)  $ and $\left(  40\right)  $.

\ 

iii) Proof of $\left(  41\right)  .$

Since $g$\ ,$h$\ and $l$ are uniformly Lipshitz with respect to $\left(
x,y,z\right)  $, then by using the Cauchy-Schwartz inequality, we have%
\begin{align*}
&  \left\vert J\left(  q^{n}\right)  -\mathcal{J}\left(  q\right)  \right\vert
\\
&  \leq C\left(  \mathbb{E}\left\vert x_{T}^{n}-x_{T}^{q}\right\vert
^{2}\right)  ^{1/2}+C\left(  \mathbb{E}\left\vert y_{0}^{n}-y_{0}%
^{q}\right\vert ^{2}\right)  ^{1/2}\\
&  +C\left(  \int_{0}^{T}\mathbb{E}\left\vert x_{t}^{n}-x_{t}^{q}\right\vert
^{2}ds\right)  ^{1/2}+C\left(  \int_{0}^{T}\mathbb{E}\left\vert y_{t}%
^{n}-y_{t}^{q}\right\vert ^{2}ds\right)  ^{1/2}+C\left(  \mathbb{E}\int
_{0}^{T}\left\vert z_{t}^{n}-z_{t}^{q}\right\vert ^{2}dt\right)  ^{1/2}\\
&  +\left(  \mathbb{E}\int_{0}^{T}\left\vert \int_{U}l\left(  t,x_{t}%
^{q},y_{t}^{q},z_{t}^{q},a\right)  \delta_{u_{t}^{n}}\left(  da\right)
dt-\int_{U}l\left(  t,x_{t}^{q},y_{t}^{q},z_{t}^{q},a\right)  q_{t}\left(
da\right)  \right\vert ^{2}dt\right)  ^{1/2}.
\end{align*}

By $\left(  38\right)  $, $\left(  39\right)  $ and $\left(  40\right)  $\ the
first five terms in the right hand side converge to zero. Furthermore, since
$h$ is bounded, measurable and continuous in $a$, then by $\left(  37\right)
$ and the dominated convergence theorem, the sixth term in the right hand side
tends to zero as $n$ tends to infinity. This prove $\left(  41\right)  $.
\end{proof}

\begin{lemma}
As a consequence of $\left(  41\right)  $, the strict and the relaxed control
problems have the same value functions. That is
\begin{equation}
\underset{v\in\mathcal{U}}{\inf}J\left(  v\right)  =\underset{q\in\mathcal{R}%
}{\inf}\mathcal{J}\left(  q\right)  .
\end{equation}

\end{lemma}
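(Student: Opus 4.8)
The plan is to derive (44) from two opposite inequalities, the only substantive one being an immediate consequence of the stability relation (41).

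First I would handle the trivial inequality $\inf_{q\in\mathcal{R}}\mathcal{J}(q)\le\inf_{v\in\mathcal{U}}J(v)$. Every strict control $v\in\mathcal{U}$ embeds into $\mathcal{R}$ as the Dirac relaxed control $\delta_{v}$, and by Remark 6 one has $(x^{\delta_{v}},y^{\delta_{v}},z^{\delta_{v}})=(x^{v},y^{v},z^{v})$ and hence $\mathcal{J}(\delta_{v})=J(v)$. Thus $\delta(\mathcal{U})\subset\mathcal{R}$ and, for each $v\in\mathcal{U}$,
\[
\inf_{q\in\mathcal{R}}\mathcal{J}(q)\le\mathcal{J}(\delta_{v})=J(v).
\]
Taking the infimum over $v\in\mathcal{U}$ gives the claimed inequality: enlarging the control class to $\mathcal{R}$ can only lower the infimum.

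For the reverse inequality $\inf_{v\in\mathcal{U}}J(v)\le\inf_{q\in\mathcal{R}}\mathcal{J}(q)$ I would invoke the previous lemma. Fix an arbitrary $q\in\mathcal{R}$; by (41) there is a sequence $(u^{n})_{n}\subset\mathcal{U}$ of strict controls with $J(u^{n})\to\mathcal{J}(q)$. Since $\inf_{v\in\mathcal{U}}J(v)\le J(u^{n})$ for every $n$, letting $n\to\infty$ yields $\inf_{v\in\mathcal{U}}J(v)\le\mathcal{J}(q)$. As $q$ was arbitrary, taking the infimum over $q\in\mathcal{R}$ gives the desired bound, and combining the two inequalities proves (44).

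I do not expect a genuine obstacle in this final step: all of the analytic difficulty (the chattering approximation of a measure-valued control by Dirac controls, together with the resulting convergence of the trajectories and of the cost) is already packaged in the stability lemma and its relation (41). The present statement is then a soft comparison of the infimum of $J$ over $\mathcal{U}$ with the infimum of $\mathcal{J}$ over its larger, stably dense image $\mathcal{R}$, so the argument reduces to the two monotonicity-of-infimum observations above.
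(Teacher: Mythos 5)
Your proof is correct and follows the same two-inequality strategy as the paper: the easy direction comes from the embedding $\delta(\mathcal{U})\subset\mathcal{R}$ together with $\mathcal{J}(\delta_{v})=J(v)$, and the substantive direction comes from the chattering approximation packaged in $\left(41\right)$. The one genuine difference is that the paper opens its proof by fixing an optimal strict control $u$ with $J(u)=\inf_{v\in\mathcal{U}}J(v)$ and an optimal relaxed control $\mu$ with $\mathcal{J}(\mu)=\inf_{q\in\mathcal{R}}\mathcal{J}(q)$, i.e.\ it tacitly assumes both infima are attained, and then compares $J(u)$ with $\mathcal{J}(\mu)$; your version works directly with the infima (applying $\left(41\right)$ to an arbitrary $q\in\mathcal{R}$ and passing to the limit in $\inf_{v}J(v)\le J(u^{n})$), so it needs no existence of minimizers. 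This is a small but real improvement: the identity of value functions is exactly the kind of statement one wants before any optimizer is known to exist, and your argument delivers it under the hypotheses actually available at this point in the paper.
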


\begin{proof}
Let $u\in\mathcal{U}$ and $\mu\in\mathcal{R}$ be respectively a strict and
relaxed controls such that%
\begin{align}
J\left(  u\right)   &  =\underset{v\in\mathcal{U}}{\inf}J\left(  v\right) \\
\mathcal{J}\left(  \mu\right)   &  =\underset{q\in\mathcal{R}}{\inf
}\mathcal{J}\left(  q\right)  .
\end{align}

By $\left(  46\right)  $, we have
\[
\mathcal{J}\left(  \mu\right)  \leq\mathcal{J}\left(  q\right)  \text{,
}\forall q\in\mathcal{R}\text{.}%
\]

Since $\delta\left(  \mathcal{U}\right)  \subset\mathcal{R}$, then%
\[
\mathcal{J}\left(  \mu\right)  \leq\mathcal{J}\left(  q\right)  \text{,
}\forall q\in\delta\left(  \mathcal{U}\right)  \text{.}%
\]

Since $q\in\delta\left(  \mathcal{U}\right)  $, then $q=\delta_{v}$, where
$v\in\mathcal{U}$.

Then we get%
\[
\left\{
\begin{array}
[c]{c}%
\left(  x^{q},y^{q},z^{q}\right)  =\left(  x^{v},y^{v},z^{v}\right)  ,\\
\mathcal{J}\left(  q\right)  =J\left(  v\right)  .
\end{array}
\right.
\]

Hence,%
\[
\mathcal{J}\left(  \mu\right)  \leq J\left(  v\right)  \text{, }\forall
v\in\mathcal{U}\text{.}%
\]

The control $u$ becomes an element of $\mathcal{U}$, then we get%
\begin{equation}
\mathcal{J}\left(  \mu\right)  \leq J\left(  u\right)  \text{.}%
\end{equation}

On the other hand, by $\left(  45\right)  $ we have%
\begin{equation}
J\left(  u\right)  \leq J\left(  v\right)  \text{, }\forall v\in
\mathcal{U}\text{.}%
\end{equation}

The control $\mu$ becomes a relaxed control, then by lemma $13$, there exists
a sequence $\left(  u^{n}\right)  _{n}$ of strict controls such
that\textit{\ }%
\[
dt\mu_{t}^{n}\left(  da\right)  =dt\delta_{u_{t}^{n}}\left(  da\right)
\underset{n\longrightarrow\infty}{\longrightarrow}dt\mu_{t}\left(  da\right)
\text{ stably},\text{\textit{\ \ }}\mathcal{P}-a.s.
\]

By $\left(  48\right)  $, we get then%
\[
J\left(  u\right)  \leq J\left(  u^{n}\right)  \text{, }\forall n\in
\mathbb{N}\text{,}%
\]

By using $\left(  41\right)  $ and letting $n$ go to infinity in\ the above
inequality, we get%
\begin{equation}
J\left(  u\right)  \leq\mathcal{J}\left(  \mu\right)  .
\end{equation}

Finally, by $\left(  47\right)  $ and $\left(  49\right)  $, the proof is completed.
\end{proof}

\ 

To establish necessary optimality conditions for strict controls, we need the
following lemma

\begin{lemma}
The strict control $u$ minimizes $J$ over $\mathcal{U}$ if and only if the
relaxed control $\mu=\delta_{u}$ minimizes $\mathcal{J}$ over $\mathcal{R}$.
\end{lemma}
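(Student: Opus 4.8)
The plan is to deduce this equivalence entirely from the equality of the two value functions, equation $\left( 44\right) $, together with the elementary identification of a Dirac-valued relaxed control with the corresponding strict control. Recall from Remark 7 that whenever $q=\delta_{v}$ for some $v\in\mathcal{U}$, one has $\left( x^{q},y^{q},z^{q}\right) =\left( x^{v},y^{v},z^{v}\right) $ and hence $\mathcal{J}\left( \delta_{v}\right) =J\left( v\right) $. Since $\delta\left( \mathcal{U}\right) \subset\mathcal{R}$, the relaxed control $\mu=\delta_{u}$ is a legitimate element of $\mathcal{R}$, so both sides of the asserted equivalence are meaningful.

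For the direct implication, I would assume that $u$ minimizes $J$ over $\mathcal{U}$, that is $J\left( u\right) =\inf_{v\in\mathcal{U}}J\left( v\right) $. By Lemma 14, equation $\left( 44\right) $, this infimum coincides with $\inf_{q\in\mathcal{R}}\mathcal{J}\left( q\right) $. Using $\mathcal{J}\left( \delta_{u}\right) =J\left( u\right) $ I then obtain $\mathcal{J}\left( \delta_{u}\right) =\inf_{q\in\mathcal{R}}\mathcal{J}\left( q\right) $, which is precisely the statement that $\mu=\delta_{u}$ is optimal for the relaxed problem.

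For the converse, I would assume $\mu=\delta_{u}$ minimizes $\mathcal{J}$ over $\mathcal{R}$, so $\mathcal{J}\left( \delta_{u}\right) =\inf_{q\in\mathcal{R}}\mathcal{J}\left( q\right) $. Invoking once more $\mathcal{J}\left( \delta_{u}\right) =J\left( u\right) $ and $\left( 44\right) $ yields $J\left( u\right) =\inf_{v\in\mathcal{U}}J\left( v\right) $, i.e. $u$ is optimal for the strict problem. This closes the equivalence in both directions.

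There is no real analytic obstacle remaining at this stage: all of the substance is already packaged in the value-function identity $\left( 44\right) $, whose proof (Lemma 14) rests in turn on the chattering Lemma 11 and the stability estimates $\left( 38\right) $--$\left( 41\right) $ of Lemma 13. The only point deserving a word of care is the \emph{exactness} of the identification $\mathcal{J}\left( \delta_{u}\right) =J\left( u\right) $, which is immediate from Remark 7; this is what guarantees that the chain of relations above consists of genuine equalities rather than mere inequalities, so that optimality transfers in both directions without loss.
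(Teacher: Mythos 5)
Your proof is correct and follows essentially the same route as the paper: both arguments rest entirely on the value-function identity $\left(44\right)$ together with the identification $\mathcal{J}\left(\delta_{u}\right)=J\left(u\right)$ from the Dirac embedding, applied symmetrically in the two directions. No discrepancy to report.
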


\begin{proof}
Suppose that $u$ minimizes the cost $J$ over $\mathcal{U}$, then
\[
J\left(  u\right)  =\underset{v\in\mathcal{U}}{\inf}J\left(  v\right)
\text{.}%
\]

By using $\left(  44\right)  $, we get%
\[
J\left(  u\right)  =\underset{q\in\mathcal{R}}{\inf}\mathcal{J}\left(
q\right)  \text{.}%
\]

Since $\mu=\delta_{u}$, then%
\begin{equation}
\left\{
\begin{array}
[c]{c}%
\left(  x^{\mu},y^{\mu},z^{\mu}\right)  =\left(  x^{u},y^{u},z^{u}\right)  ,\\
\mathcal{J}\left(  \mu\right)  =J\left(  u\right)  ,
\end{array}
\right.
\end{equation}

This implies that%
\[
\mathcal{J}\left(  \mu\right)  =\underset{q\in\mathcal{R}}{\inf\mathcal{J}%
\left(  q\right)  }.
\]

Conversely, if $\mu=\delta_{u}$ minimize $\mathcal{J}$ over $\mathcal{R}$,
then%
\[
\mathcal{J}\left(  \mu\right)  =\underset{q\in\mathcal{R}}{\inf\mathcal{J}%
\left(  q\right)  }.
\]

From $\left(  44\right)  $, we get%
\[
\mathcal{J}\left(  \mu\right)  =\underset{v\in\mathcal{U}}{\inf J\left(
v\right)  }.
\]

Since $\mu=\delta_{u}$, then relations $\left(  50\right)  $ hold, and we
obtain%
\[
J\left(  u\right)  =\underset{v\in\mathcal{U}}{\inf J\left(  v\right)  }.
\]

The proof is completed.
\end{proof}

\ 

The following lemma, who will be used to establish sufficient optimality
conditions for strict controls, shows that we get the results of the above
lemma if we replace $\mathcal{R}$ by $\mathbb{\delta}\left(  \mathcal{U}%
\right)  .$

\begin{lemma}
The strict control $u$ minimizes $J$ over $\mathcal{U}$ if and only if the
relaxed control $\mu=\delta_{u}$ minimizes $\mathcal{J}$ over $\delta\left(
\mathcal{U}\right)  $.
\end{lemma}

\begin{proof}
Let $\mu=\delta_{u}$ be an optimal relaxed control minimizing the cost
$\mathcal{J}$ over $\delta\left(  \mathcal{U}\right)  $, we have then%
\[
\mathcal{J}\left(  \mu\right)  \leq\mathcal{J}\left(  q\right)  \text{,\ \ }%
\forall q\in\delta\left(  \mathcal{U}\right)  .
\]

Since $q\in\delta\left(  \mathcal{U}\right)  $, then there exists
$v\in\mathcal{U}$ such that $q=\delta_{v}.$

It is easy to see that%
\begin{equation}
\left\{
\begin{array}
[c]{c}%
\left(  x^{\mu},y^{\mu},z^{\mu}\right)  =\left(  x^{u},y^{u},z^{u}\right)  ,\\
\left(  x^{q},y^{q},z^{q}\right)  =\left(  x^{v},y^{v},z^{v}\right)  ,\\
\mathcal{J}\left(  \mu\right)  =J\left(  u\right)  ,\\
\mathcal{J}\left(  q\right)  =J\left(  v\right)  .
\end{array}
\right.
\end{equation}

Then, we get%
\[
J\left(  u\right)  \leq J\left(  v\right)  ,\ \ \forall v\in\mathcal{U}%
\text{.}%
\]

Conversely, let $u$ be a strict control minimizing the cost $J$ over
$\mathcal{U}$. Then%
\[
J\left(  u\right)  \leq J\left(  v\right)  ,\ \ \forall v\in\mathcal{U}%
\text{.}%
\]

Since the controls $u,v$ $\in\mathcal{U}$, then there exist $\mu,q\in
\delta\left(  \mathcal{U}\right)  $ such that
\[
\mu=\delta_{u}\ \ \ ,\ \ \ q=\delta_{v}.
\]

This implies that relations $\left(  51\right)  $ hold. Consequently, we get%
\[
\mathcal{J}\left(  \mu\right)  \leq\mathcal{J}\left(  q\right)  \text{,\ \ }%
\forall q\in\delta\left(  \mathcal{U}\right)  .
\]

The lemma is proved.
\end{proof}

\subsection{Necessary optimality conditions for strict controls}

Define the Hamiltonian $H$\ in the strict case from $\left[  0,T\right]
\times\mathbb{R}^{n}\times\mathbb{R}^{m}\times\mathcal{M}_{m\times d}\left(
\mathbb{R}\right)  \times U\times\mathbb{R}^{m}\times\mathbb{R}^{n}%
\times\mathcal{M}_{n\times d}\left(  \mathbb{R}\right)  $\ into $\mathbb{R}%
$\ by%
\[
H\left(  t,x,y,z,v,k,p,P\right)  =l\left(  t,x,y,z,v\right)  +pb\left(
t,x,v\right)  +P\sigma\left(  t,x,v\right)  +kf\left(  t,x,y,z,v\right)  .
\]

\begin{theorem}
(Necessary optimality conditions for strict controls). \textit{Let }%
$u$\textit{\ be an optimal control minimizing the functional }$J$%
\textit{\ over }$\mathcal{U}$\textit{\ and }$\left(  x^{u},y^{u},z^{u}\right)
$\textit{\ the solution of }$\left(  1\right)  $\textit{\ associated with }%
$u$\textit{. }Then, there exist three adapted processes $\left(  p^{\mu
},P^{\mu},k^{\mu}\right)  $, unique solution of the following FBSDE
system\textit{ (called adjoint equations)}%
\begin{equation}
\left\{
\begin{array}
[c]{ll}%
dk_{t}^{u}= & H_{y}\left(  t,x_{t}^{u},y_{t}^{u},z_{t}^{u},u_{t},k_{t}%
^{u},p_{t}^{u},P_{t}^{u}\right)  dt\\
& +H_{z}\left(  t,x_{t}^{u},y_{t}^{u},z_{t}^{u},u_{t},k_{t}^{u},p_{t}%
^{u},P_{t}^{u}\right)  dW_{t},\\
k_{0}^{u}= & h_{y}\left(  y_{0}^{u}\right) \\
dp_{t}^{u}= & -H_{x}\left(  t,x_{t}^{u},y_{t}^{u},z_{t}^{u},u_{t},k_{t}%
^{u},p_{t}^{u},P_{t}^{u}\right)  dt+P_{t}^{u}dW_{t},\\
p_{T}^{u}= & g_{x}\left(  x_{T}^{u}\right)  +\varphi_{x}\left(  x_{T}%
^{u}\right)  k_{T}^{u},
\end{array}
\right.
\end{equation}
such that for every $v_{t}\in U$%
\begin{equation}
H\left(  t,x_{t}^{u},y_{t}^{u},z_{t}^{u},u_{t},k_{t}^{u},p_{t}^{u},P_{t}%
^{u}\right)  \leq H\left(  t,x_{t}^{u},y_{t}^{u},z_{t}^{u},v_{t},k_{t}%
^{u},p_{t}^{u},P_{t}^{u}\right)  ,\ ae\ ,\ as.
\end{equation}

\end{theorem}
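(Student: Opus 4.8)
The plan is to obtain the strict necessary conditions as a direct specialization of the relaxed ones, using that a strict control embeds into $\mathcal{R}$ as the Dirac measure $\delta_u$. First I would invoke the equivalence established in Lemma~17: since $u$ minimizes $J$ over $\mathcal{U}$, the relaxed control $\mu=\delta_u$ minimizes $\mathcal{J}$ over $\mathcal{R}$. Hence $\mu=\delta_u$ satisfies the necessary optimality conditions for relaxed controls, i.e. the adjoint system $(30)$ together with the variational inequality $(31)$, for some adjoint triple $(k^\mu,p^\mu,P^\mu)$. Moreover, by the Dirac reduction formulas of Remark~6, $(x^\mu,y^\mu,z^\mu)=(x^u,y^u,z^u)$, so the forward state entering the adjoint equations is exactly the strict trajectory.

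The next step is to verify that, at a Dirac mass, the relaxed Hamiltonian collapses to the strict one. Evaluating $\mathcal{H}$ at $q_t=\delta_{v_t}$ and using $\int_U\phi(a)\,\delta_{v}(da)=\phi(v)$ for each of the integrands $l,b,\sigma,f$ gives
\[
\mathcal{H}\left(t,x,y,z,\delta_{v},k,p,P\right)=H\left(t,x,y,z,v,k,p,P\right).
\]
Since the control value does not depend on $(x,y,z)$, differentiation in $(x,y,z)$ commutes with this Dirac evaluation, so $\mathcal{H}_x,\mathcal{H}_y,\mathcal{H}_z$ at $\mu_t=\delta_{u_t}$ agree with $H_x,H_y,H_z$ evaluated at $u_t$. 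Substituting these identities termwise into $(30)$ turns the relaxed adjoint system into the strict adjoint system $(52)$, and uniqueness of $(k^u,p^u,P^u)$ is inherited from the relaxed case since the adjoint equations are linear.

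It remains to read off the pointwise minimality $(53)$. The relaxed inequality $(31)$ holds for \emph{every} $q_t\in\mathbb{P}(U)$; restricting the test measure to the Dirac masses $q_t=\delta_{v_t}$, $v_t\in U$, and applying the collapse formula to both sides yields
\[
H\left(t,x_t^u,y_t^u,z_t^u,u_t,k_t^u,p_t^u,P_t^u\right)\leq H\left(t,x_t^u,y_t^u,z_t^u,v_t,k_t^u,p_t^u,P_t^u\right),
\]
which is precisely $(53)$. The only delicate point is that $(31)$ is a priori stronger, being a minimization over the whole simplex $\mathbb{P}(U)$ rather than over $U$; but since each $\delta_{v}$ lies in $\mathbb{P}(U)$, restricting to Dirac test measures is legitimate and recovers exactly the strict condition. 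I do not expect any serious obstacle here: the whole argument goes through cleanly because the relaxed coefficients are linear in the measure variable, which makes the Dirac specialization exact, so no separate spike-variation computation is required.
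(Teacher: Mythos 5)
Your proposal is correct and follows essentially the same route as the paper: invoke Lemma 17 to transfer optimality of $u$ to optimality of $\mu=\delta_{u}$ over $\mathcal{R}$, apply the relaxed necessary conditions (Theorem 11), collapse $\mathcal{H}$ and its derivatives at Dirac measures to $H$ so that the relaxed adjoint system becomes the strict one, and restrict the test measures in $(31)$ to Dirac masses $\delta_{v_t}$ to obtain $(53)$. The paper records the same collapse identities in its display $(55)$ and otherwise argues exactly as you do.
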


\begin{proof}
Let $u$ be an optimal solution of the strict control problem $\left\{  \left(
1\right)  ,\left(  2\right)  ,\left(  3\right)  \right\}  $. Then, there exist
$\mu\in\delta\left(  \mathcal{U}\right)  $ such that
\[
\mu=\delta_{u}.
\]

Since $u$ minimizes the cost $J$ over $\mathcal{U}$, then by lemma $17$, $\mu$
minimizes $\mathcal{J}$ over $\mathcal{R}$. Hence, by the necessary optimality
conditions for relaxed controls (Theorem $11$), there exist three unique
adapted processes $\left(  k^{\mu},p^{\mu},P^{\mu}\right)  $, solution of the
system of relaxed adjoint equations $\left(  30\right)  $ such that, for every
$q_{t}\in\mathbb{P}\left(  U\right)  $%
\[
\mathcal{H}\left(  t,x_{t}^{\mu},y_{t}^{\mu},z_{t}^{\mu},\mu_{t},k_{t}^{\mu
},p_{t}^{\mu},P_{t}^{\mu}\right)  \leq\mathcal{H}\left(  t,x_{t}^{\mu}%
,y_{t}^{\mu},z_{t}^{\mu},q_{t},k_{t}^{\mu},p_{t}^{\mu},P_{t}^{\mu}\right)
,\ a.e,\ a.s.
\]

Since $\mathbb{\delta}\left(  U\right)  \subset\mathbb{P}\left(  U\right)  $,
then for every $v_{t}\in\delta\left(  U\right)  $, we get
\begin{equation}
\mathcal{H}\left(  t,x_{t}^{\mu},y_{t}^{\mu},z_{t}^{\mu},\mu_{t},k_{t}^{\mu
},p_{t}^{\mu},P_{t}^{\mu}\right)  \leq\mathcal{H}\left(  t,x_{t}^{\mu}%
,y_{t}^{\mu},z_{t}^{\mu},q_{t},k_{t}^{\mu},p_{t}^{\mu},P_{t}^{\mu}\right)
,\ a.e,\ a.s.
\end{equation}

Since $q\in\delta\left(  \mathcal{U}\right)  $, then there exist
$v\in\mathcal{U}$ such that $q=\delta_{v}$.

We note that $v$ is an arbitrary element of $\mathcal{U}$ since $q$ is arbitrary.

Now, since $\mu=\delta_{u}$ and $q=\delta_{v}$, we can easily see that%
\begin{equation}
\left\{
\begin{array}
[c]{c}%
\left(  x^{\mu},y^{\mu},z^{\mu}\right)  =\left(  x^{u},y^{u},z^{u}\right)  ,\\
\left(  x^{q},y^{q},z^{q}\right)  =\left(  x^{v},y^{v},z^{v}\right)  ,\\
\left(  k^{\mu},p^{\mu},P^{\mu}\right)  =\left(  k^{u},p^{u},P^{u}\right)  ,\\
\mathcal{H}\left(  t,x_{t}^{\mu},y_{t}^{\mu},z_{t}^{\mu},\mu_{t},k_{t}^{\mu
},p_{t}^{\mu},P_{t}^{\mu}\right)  =H\left(  t,x_{t}^{u},y_{t}^{u},z_{t}%
^{u},u_{t},k_{t}^{u},p_{t}^{u},P_{t}^{u}\right)  ,\\
\mathcal{H}\left(  t,x_{t}^{\mu},y_{t}^{\mu},z_{t}^{\mu},q_{t},k_{t}^{\mu
},p_{t}^{\mu},P_{t}^{\mu}\right)  =H\left(  t,x_{t}^{u},y_{t}^{u},z_{t}%
^{u},v_{t},k_{t}^{u},p_{t}^{u},P_{t}^{u}\right)  ,
\end{array}
\right.
\end{equation}
where, the pair $\left(  p^{u},P^{u}\right)  $ and $k^{u}$ are respectively
the unique solutions of the system of strict adjoint equations $\left(
52\right)  $.

Finally, by using $\left(  54\right)  $ and $\left(  55\right)  $, we can easy
deduce $\left(  53\right)  $. The proof is completed.
\end{proof}

\subsection{Sufficient optimality conditions for strict controls}

\begin{theorem}
(Sufficient optimality conditions for strict controls). Assume that the
functions $g$, and $\left(  x,y,z\right)  \longmapsto H\left(
t,x,y,z,q,k,p,P\right)  $ are convex, and for any $v\in\mathcal{U}$,
$y_{T}^{v}=\xi,$ where $\xi$\ is an $m$-dimensional $\mathcal{F}_{T}%
$-measurable random variable such that%
\[
\mathbb{E}\left\vert \xi\right\vert ^{2}<\infty.
\]

Then, $u$ is an optimal solution of the control problem $\left\{  \left(
1\right)  ,\left(  2\right)  ,\left(  3\right)  \right\}  $, if it satisfies
$\left(  53\right)  .$
\end{theorem}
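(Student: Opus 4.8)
The plan is to reduce the statement to the sufficiency result already established for relaxed controls (Theorem 12), using the Dirac embedding $u\longmapsto\mu=\delta_{u}$ together with the equivalence of Lemma 17. First I would set $\mu=\delta_{u}$ and record, exactly as in the proof of Theorem 19, that the relaxed state and adjoint processes attached to $\mu$ coincide with the strict ones, $(x^{\mu},y^{\mu},z^{\mu})=(x^{u},y^{u},z^{u})$ and $(k^{\mu},p^{\mu},P^{\mu})=(k^{u},p^{u},P^{u})$, since the relaxed adjoint system (30) collapses to the strict system (52) when the control measure is a Dirac mass. Directly from the definitions of the two Hamiltonians one also has the representation $\mathcal{H}(t,x,y,z,q,k,p,P)=\int_{U}H(t,x,y,z,a,k,p,P)\,q(da)$, and in particular $\mathcal{H}(t,\cdot,\delta_{v},\cdot)=H(t,\cdot,v,\cdot)$.

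The key step is to upgrade the pointwise minimum condition (53) to the relaxed minimum condition (31). For any $q_{t}\in\mathbb{P}(U)$, integrating (53) — which holds for every $a\in U$ in place of $v_{t}$ — against the probability measure $q_{t}$ gives
\begin{align*}
\mathcal{H}(t,x_{t}^{\mu},y_{t}^{\mu},z_{t}^{\mu},\mu_{t},k_{t}^{\mu},p_{t}^{\mu},P_{t}^{\mu})
&= H(t,x_{t}^{u},y_{t}^{u},z_{t}^{u},u_{t},k_{t}^{u},p_{t}^{u},P_{t}^{u}) \\
&= \int_{U} H(t,x_{t}^{u},y_{t}^{u},z_{t}^{u},u_{t},k_{t}^{u},p_{t}^{u},P_{t}^{u})\,q_{t}(da) \\
&\leq \int_{U} H(t,x_{t}^{u},y_{t}^{u},z_{t}^{u},a,k_{t}^{u},p_{t}^{u},P_{t}^{u})\,q_{t}(da) \\
&= \mathcal{H}(t,x_{t}^{\mu},y_{t}^{\mu},z_{t}^{\mu},q_{t},k_{t}^{\mu},p_{t}^{\mu},P_{t}^{\mu}),
\end{align*}
where the second equality uses that $q_{t}$ has total mass one and the last one the representation above. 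Hence $\mu=\delta_{u}$ satisfies (31).

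Next I would verify the remaining hypotheses of Theorem 12. The map $(x,y,z)\longmapsto\mathcal{H}(t,x,y,z,q,k,p,P)$ is convex, being the average $\int_{U}H(t,x,y,z,a,k,p,P)\,q(da)$ of the functions $(x,y,z)\longmapsto H(t,x,y,z,a,k,p,P)$, each convex by assumption, and an integral of convex functions against a probability measure is convex. Together with the convexity of $g$ (and $h$) and the fixed terminal datum $y_{T}=\xi$, all the hypotheses of Theorem 12 are met for $\mu$. Applying Theorem 12 yields that $\mu=\delta_{u}$ minimizes $\mathcal{J}$ over $\mathcal{R}$, and Lemma 17 then gives that $u$ minimizes $J$ over $\mathcal{U}$, which is the assertion.

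The genuinely load-bearing point is the translation of (53) into (31): one must check that the strict condition, a minimum over the action set $U$, really forces the relaxed minimum over the whole simplex $\mathbb{P}(U)$, and this rests entirely on the linearity of $\mathcal{H}$ in $q$ through $\mathcal{H}(t,\cdot,q,\cdot)=\int_{U}H(t,\cdot,a,\cdot)\,q(da)$. The convexity transfer from $H$ to $\mathcal{H}$ and the bookkeeping identifying the two adjoint systems under the Dirac embedding are then routine. I note that the convexity of $h$, which Theorem 12 requires, is used implicitly here and so should be assumed as well.
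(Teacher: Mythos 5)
Your proposal is correct, but it takes a slightly different (and in fact stronger) route than the paper. The paper's own proof never upgrades $(53)$ to the full relaxed condition $(31)$: it only observes that $(53)$ gives the Hamiltonian inequality for measures $q_{t}\in\delta\left(  U\right)  $, then reruns the argument of Theorem 12 restricted to the subset $\delta\left(  \mathcal{U}\right)  $ to conclude that $\mu=\delta_{u}$ minimizes $\mathcal{J}$ over $\delta\left(  \mathcal{U}\right)  $ only, and finishes with Lemma 18 (the $\delta\left(  \mathcal{U}\right)  $ version of the equivalence), not Lemma 17. Your key step --- integrating the pointwise inequality $(53)$ against an arbitrary $q_{t}\in\mathbb{P}\left(  U\right)  $ and using $\mathcal{H}\left(  t,\cdot,q,\cdot\right)  =\int_{U}H\left(  t,\cdot,a,\cdot\right)  q\left(  da\right)  $ --- shows that a minimum over the action set $U$ is automatically a minimum over the whole simplex $\mathbb{P}\left(  U\right)  $, so you can invoke Theorem 12 as a black box and conclude via Lemma 17 that $\mu$ is optimal over all of $\mathcal{R}$. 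This buys a cleaner reduction and a slightly stronger intermediate conclusion; the only point to be careful about is that $(53)$ holds for each $v_{t}$ outside a null set a priori depending on $v_{t}$, so the integration step needs a common null set, which follows from separability of $U$ and continuity of $H$ in the control argument. Your remark that the convexity of $h$ is used but missing from the statement is accurate: the paper's own proof also invokes it.
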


\begin{proof}
Let $u$ be a strict control (candidate to be optimal) such that necessary
optimality conditions for strict controls\ $\left(  53\right)  $ hold. i.e,
for every $v_{t}\in U$%
\begin{equation}
H\left(  t,x_{t}^{u},y_{t}^{u},z_{t}^{u},u_{t},k_{t}^{u},p_{t}^{u},P_{t}%
^{u}\right)  \leq H\left(  t,x_{t}^{u},y_{t}^{u},z_{t}^{u},v_{t},k_{t}%
^{u},p_{t}^{u},P_{t}^{u}\right)  ,\ a.e,\ a.s.
\end{equation}

The controls $u,v$ are elements of $\mathcal{U}$, then there exist $\mu
,q\in\delta\left(  \mathcal{U}\right)  $ such that%
\begin{align*}
\mu &  =\delta_{u},\\
q  &  =\delta_{v}.
\end{align*}

This implies that relations $\left(  55\right)  $ hold. Then by $\left(
56\right)  $, we deduce that for every $q_{t}\in\mathbb{\delta}\left(
U\right)  $%
\[
\mathcal{H}\left(  t,x_{t}^{\mu},y_{t}^{\mu},z_{t}^{\mu},\mu_{t},k_{t}^{\mu
},p_{t}^{\mu},P_{t}^{\mu}\right)  \leq\mathcal{H}\left(  t,x_{t}^{\mu}%
,y_{t}^{\mu},z_{t}^{\mu},q_{t},k_{t}^{\mu},p_{t}^{\mu},P_{t}^{\mu}\right)
,\ a.e,\ a.s.
\]

Since $H$ is convex in $\left(  x,y,z\right)  $, it is easy to see that
$\mathcal{H}$ is convex in $\left(  x,y,z\right)  $, and since $g$ and $h$ are
convex, then by the same proof that in theorem $12$, we show that $\mu$
minimizes the cost $\mathcal{J}$ over $\mathbb{\delta}\left(  \mathcal{U}%
\right)  $. Finally by lemma $18$, we deduce that $u$ minimizes the cost $J$
over $\mathcal{U}$. The theorem is proved.
\end{proof}

\begin{remark}
The sufficient optimality conditions for strict controls are proved without
assuming neither the convexity of $U$ nor that of $H$ in $v$.
\end{remark}


\begin{thebibliography}{99}                                                                                               %


\bibitem {}F. Antonelli, \textit{Backward-forward stochastic differential
equations}. Annals of Applied Probability, 1993,\ 3, pp. 777--793.

\bibitem {}F. Antonelli and J.\ Ma, \textit{Weak solution of Forward-Backward
SDE's}. Stochatsic Analysis and Application, 2003, 21, n.3, pp. 493--514.

\bibitem {}F. Armerin, \textit{Aspects of Cash Flow Valuation}, Doctoral
thesis, KTH Stockholm - Sweden. 2004.

\bibitem {}S. Bahlali, B. Mezerdi and B. Djehiche, \textit{Approximation and
optimality necessary conditions in relaxed stochastic control problems,}
Journal of Applied Mathematics and Stochastic Analysis, Volume 2006, pp 1-23.

\bibitem {}S. Bahlali and B. Labed,\textit{\ Necessary and sufficient
conditions of optimality for optimal control problem with initial and terminal
costs}, Rand. Operat. and Stoch. Equ, 2006, Vol 14, No3, pp 291-301.

\bibitem {}S. Bahlali, B. Djehiche and B. Mezerdi, \textit{The relaxed maximum
principle in singular control of diffusions}, SIAM J. Control and Optim, 2007,
Vol 46, Issue 2, pp 427-444.

\bibitem {}S. Bahlali, \textit{Necessary and sufficient conditions of
optimality for relaxed and strict control problems}, SIAM J. Control and
Optim, 2008, Vol. 47, No. 4, pp. 2078--2095.

\bibitem {}S. Bahlali, \textit{Necessary and sufficient condition of
optimality for optimal control problem of forward and backward systems},
Theory of Probability and Its Applications ( TVP), In revision.

\bibitem {}S. Bahlali, \textit{Necessary and sufficient optimality conditions
for relaxed and strict control problems of backward systems}, Stochastics and
Dynamics, Submitted.

\bibitem {}S. Bahlali, \textit{A general necessary and sufficient optimality
conditions for singular control problems}, SIAM J. Control and Optim, Submitted.

\bibitem {}A. Bensoussan, \textit{Non linear filtering and stochastic
control}. Proc. Cortona 1981, Lect. notes in Math. 1982, 972, Springer Verlag.

\bibitem {}Ph. Briand, B. Delyon, Y. Hu, E. Pardoux and L. Stoica, $L^{p}$
\textit{Solutions of backward stochastic differential equations}, Sochastic
Process and their Applications, No 108, $2003$, pp 109-129.

\bibitem {}F. Delarue, \textit{On the existence and uniqueness of solutions to
FBSDEs in a non-degenerate case}. Stochastic Process. Appl., $2002$, 99, pp. 209--286.

\bibitem {}N. Dokuchaev and X. Y. Zhou, \textit{Stochastic controls with
terminal contingent conditions, }Journal Of Mathematical Analysis And
Applications, $1999$, 238, pp 143-165.

\bibitem {}J. Douglas, J. Ma and P. Protter, \textit{Numerical methods for
forward-backward stochastic differential equations}, $1996$, Ann. Appl.
Probab., 6(3), pp 940-968.

\bibitem {}N. El Karoui, N. Huu Nguyen and M. Jeanblanc
Piqu\'{e},\textit{\ Compactification methods in the control of degenerate
diffusions.} Stochastics, Vol. 20, 1987, pp 169-219.

\bibitem {}N. El Karoui and L. Mazliak, \textit{Backward stochastic
differential equations}, $1997$, Addison Wesley, Longman.

\bibitem {}N. El-Karoui, S. Peng, and M. C. Quenez, \textit{Backward
stochastic differential equations in finance}\textbf{,} $1997$, Math. finance 7.

\bibitem {}N. El-Karoui, S. Peng and M.C. Quenez,\textit{\ A dynamic maximum
principle for the optimization of recursive utilities under constraints},
Annals of Applied Probability, 11(2001), pp 664-693.

\bibitem {}R.J.\ Elliott and M.\ Kohlmann, \textit{The variational principle
and stochastic optimal control. }Stochastics 3, 1980, pp 229-241.

\bibitem {}W.H. Fleming, \textit{Generalized solutions in optimal stochastic
control}, Differential games and control theory 2, (Kingston conference 1976),
Lect. Notes in Pure and Appl. Math.30, 1978.

\bibitem {}N.F. Framstad, B. Oksendal and A. Sulem, \textit{A sufficient
stochastic maximum principle for optimal control of jump diffusions and
applications to finance}, J. Optim. Theory and applications. 121, $2004$, pp 77-98.

\bibitem {}M. Fuhrman and G. Tessitore, \textit{Existence of optimal
stochastic controls and global solutions of forward-backward stochastic
differential equations},\textit{\ }SIAM J. Control and Optim, $2004$, Vol 43,
N$%
%TCIMACRO{\U{b0}}%
%BeginExpansion
{{}^\circ}%
%EndExpansion
$ 3, pp 813-830.

\bibitem {}U.G. Haussmann, \textit{General necessary conditions for optimal
control of stochastic systems}, Math. Programming Studies 6, 1976, pp 30-48.

\bibitem {}U.G. Haussmann,\ \textit{A Stochastic maximum principle for optimal
control of diffusions},\ Pitman Research Notes in Math, 1986, Series 151.

\bibitem {}Y. Hu, \textit{On the solution of Forward-backward SDEs with
monotone and continuous coeffcients}, Nonlinear Anal., $1999$,\ 42, pp 1-12.

\bibitem {}Y. Hu and S. Peng, \textit{Solution of forward-backward stochastic
differential equations}. Probab.Theory Rel. Fields, $1995$,103, pp. 273--283.

\bibitem {}Y. Hu and J.\ Yong, \textit{Forward-backward stochastic
differential equations with nonsmooth coeffcients}. Stochatic Process. Appl.,
$2000$,\ 87, pp. 93--106.

\bibitem {}J. Jacod and J. M\'{e}min, \textit{Sur un type de convergence
interm\'{e}diaire entre la convergence en loi et la convergence en
probabilit\'{e}}. Sem. Proba.XV, Lect. Notes in Math. 851, Springer Verlag, 1980.

\bibitem {}S.Ji and X. Y. Zhou, \textit{A maximum principle for stochastic
optimal control with terminal state constraints, and its applications.
}Commun. Inf. Syst, 2006, 6(4), pp 321-338.

\bibitem {}H.J. Kushner, \textit{Necessary conditions for continuous parameter
stochastic optimization problems}, SIAM J. Control Optim, Vol. 10, 1973, pp 550-565.

\bibitem {}N.V. Krylov, \textit{Controlled diffusion processes}, Springer
verlag. 1980.

\bibitem {}J. Ma, P. Protter and J.\ Yong, \textit{Solving forward-backward
stochastic differential equations explicitly - a four step scheme,} Probab.
Theory Rel. Fields, $1994$, 98, pp 339--359.

\bibitem {}J. Ma and J. Yong, \textit{Solvability of forward-backward SDEs and
the nodal set of Hamilton-Jacobi-Bellman equations}. A Chinese summary appears
in Chinese Ann. Math. Ser. A 16 (1995), no. 4, 532. Chinese Ann. Math. Ser. B
16, 1995, no. 3, pp 279--298.

\bibitem {}J. Ma and J. Yong, \textit{Forward-backward stochastic differential
equations and their applications, }In Lecture Notes Math., $1999 $, volume
1702. Springer, Berlin.

\bibitem {}J. Ma and J.\ Zhang, \textit{Representation theorems for backward
stochastic differential equations}, Ann. Appl. Probab., $2002$, 12(4), pp
1390-1418, 2002.

\bibitem {}B. Mezerdi and\ S. Bahlali,\ \textit{Approximation in optimal
control of diffusion processes}, Rand. Operat. and Stoch. Equ, 2000, Vol.8, No
4, pp 365-372.

\bibitem {}B. Mezerdi and\ S. Bahlali,\ \textit{Necessary conditions for
optimality in relaxed stochastic control problems},\ Stochastics And Stoch.
Reports, 2002, Vol 73 (3-4), pp 201-218.

\bibitem {}E. Pardoux and S. Peng, \textit{Adapted solutions of backward
stochastic differential equations}, Sys. Control Letters, $1990$, Vol. 14, pp 55-61.

\bibitem {}E. Pardoux and S. Tang, \textit{Forward-Backward stochastic
differential equations and quasilinear parabolic PDEs}. Probab. Theory Rel.
Fields, $1999$, 114, pp 123--150.

\bibitem {}S. Peng, \textit{A general stochastic maximum principle for optimal
control problems.} SIAM J. Control and Optim.\ $1990$, 28, N${{}^{\circ}}$ 4,
pp 966-979.

\bibitem {}S. Peng, \textit{Backward stochastic differential equations and
application to optimal control}, Appl. Math. Optim. $1993$,\ 27, pp 125-144.

\bibitem {}S. Peng and Z.\ Wu, \textit{Fully coupled forward-backward
stochastic differential equations and applications to optimal control}. SIAM
J. Control Optim., $1999$, 37, no. 3, pp. 825--843.

\bibitem {}J.T.\ Shi and Z. Wu, \textit{The maximum principle for fully
coupled forward-backward stochastic control system}, Acta Automatica Sinica,
Vol 32, No 2, 2006, pp 161-169.

\ 

\bibitem {}A.V. Skorokhod, \textit{Studies in the theory of random processes,}
Reading Mass, Addison Wesley. 1965.

\bibitem {}Z. Wu, \textit{Maximum Principle for Optimal Control Problem of
Fully Coupled Forward-Backward Stochastic Systems}, Systems Sci. Math. Sci,
$1998$, 11, No.3, pp 249-259.

\bibitem {}W. Xu, \textit{Stochastic maximum principle for optimal control
problem of forward and backward system, }J. Austral. Math. Soc. Ser. B 37,
$1995$, pp 172-185.

\bibitem {}J.\ Yong, \textit{\ Finding adapted solutions of forward-backward
stochastic differential equations - method of continuation}, Probablity Theory
Related Fields, $1997$,\ 107, pp. 537--572.

\bibitem {}J.\ Yong and X.Y. Zhou, \textit{Stochastic controls : Hamilton
systems and HJB\ equations}, vol 43, Springer, New York, 1999.

\bibitem {}X.Y. Zhou, \textit{Sufficient conditions of optimality for
stochastic systems with controllable diffusions}. IEEE Trans. on Automatic
Control, 1996, 41, pp 1176-1179.
\end{thebibliography}
\end{document}